\documentclass[12pt]{article}
\usepackage{preprint-layout}

\makeatletter
\newcommand{\proofparagraph}[1]{%
  \ifnum\prevgraf=0\relax
    {\bfseries\boldmath{#1}}\hspace{0.6em}%
  \else
    \par\addvspace{.5\baselineskip}%
    \noindent{\bfseries\boldmath{#1}}\hspace{0.6em}%
  \fi
}
\makeatother

\usepackage{preprint-notation}
\usepackage{hyperref}
\usepackage{bbm}
\usepackage{booktabs}
\usepackage{doi}

\newcommand{\Nt}{N_t}
\newcommand{\Nx}{d}
\newcommand{\uh}{u_{h}}

\newcommand{\barronf}{f}
\newcommand{\hatuh}{\widehat{u}_h}
\newcommand{\steplen}{\rho}

\DeclareMathOperator*{\argmin}{arg\,min}

\title{Solving Inverse Parametrized Problems via Finite Elements and Extreme Learning Networks}

\author{Erik Burman, Mats G. Larson, Karl Larsson, Jonatan Vallin}
\date{\today}

\begin{document}

\maketitle

\begin{abstract}
We develop an interpolation-based modeling framework for para\-meter-dependent partial differential equations arising in control, inverse problems, and uncertainty quantification. The solution is discretized in the physical domain using finite element methods, while the dependence on a finite-dimensional parameter is approximated separately. We establish existence, uniqueness, and regularity of the parametric solution and derive rigorous error estimates that explicitly quantify the interplay between spatial discretization and parameter approximation.
  
In low-dimensional parameter spaces, classical interpolation schemes yield algebraic convergence rates based on Sobolev regularity in the parameter variable. In higher-dimensional parameter spaces, we replace classical interpolation by extreme learning machine (ELM) surrogates and obtain error bounds under explicit approximation and stability assumptions. The proposed framework is applied to inverse problems in quantitative photoacoustic tomography, where we derive potential and parameter reconstruction error estimates and demonstrate substantial computational savings compared to standard approaches, without sacrificing accuracy.
\end{abstract}

\section{Introduction}
\label{sec:intro}

\paragraph{Parameter-Dependent PDE.}
Many problems in scientific computing require repeatedly solving partial differential equations (PDEs) that depend on physical or geometrical parameters. In this paper, we consider a Schr\"odinger-type (or more broadly, elliptic-type) PDE defined over a convex polygonal (or polyhedral) spatial domain \(\Omega_x \subset \mathbb{R}^\Nx\) with \(\Nx \in \{2,3\}\), see Figure~\ref{fig:physical-domain}. Let \(\Omega_t = [0,1]^{\Nt}\) denote the parameter domain, where \(\Nt \ge 1\) may be large, see Figure~\ref{fig:param-domain}. Denote \(x \in \Omega_x\) as the spatial variable and \(t \in \Omega_t\) as the parameter vector. Consider the parameterized problem
\begin{equation}
-\Delta u(x,t) + \mu(x,t)\,u(x,t) = f(x) \quad \text{for } (x,t) \in \Omega_x \times \Omega_t
\label{eq:main}
\end{equation}
subject to Dirichlet boundary conditions on \(\partial\Omega_x\). Here, \(f\) is a prescribed source term and \(\mu(x,t)\) is a spatially and parametrically varying potential, which we express as
\begin{equation}
\mu(x,t) = \mu_0(x) + \sum_{i=1}^{\Nt} t_i\,\mu_i(x)
\label{eq:potential}
\end{equation}
where \(t = [t_i]_{i=1}^{\Nt}\) is the parameter vector and \(\mu_i \ge 0\) are given linearly independent potentials. Problems of this kind arise in contexts such as stationary quantum mechanics, diffusion with spatially varying coefficients, and other elliptic operators that depend linearly or nonlinearly on parameters. In inverse reconstruction problem measured data is used to recover the potential $\mu$. The inverse problem where the coefficient is sought in an infinite dimensional space is known to have poor stability properties and often requires regularization \cite{EHN96,IJ15,EH18} or convexification \cite{KLZ20} on the continuous level. If on the other hand the target quantity has finite dimension it is well known that the inverse problem typically is Lipschitz stable \cite{AV05,Sin07,AHFGS19,RS22}, and this observation motivates both the parametrization \eqref{eq:potential} and the subsequent error analysis. We note that it is enough for the unknown parameter to be close to the finite dimensional space in some suitable topology. The distance enters as a perturbation and limits the accuracy of the reconstruction. As a consequence it should be possible to design accurate reconstruction methods for inverse problems where the target quantity is finite dimensional. The exploitation of Lipschitz stability in the design and analysis of numerical methods is recent, and has in particular been studied in the context of unique continuation problems \cite{Harr21,BO24,BOZ25,BCJZ25, BKO25}. The analysis of more general inverse problem is complicated by the need of data in the form of the complete Dirichlet to Neumann map. For finite dimensional target quantities it is known that finite dimensional data is sufficient for the reconstruction \cite{AS19,AS22,AAS23}.
In the classical Calder\'on-Gelfand problem the finite dimensional potential $\mu$ is reconstructed from data on the boundary, in which case a sufficiently rich, but finite, sample of the Dirichlet to Neumann map is required \cite{AS19}. Even this leads to an additional level of computational complexity since several data points have to be considered, the choice of which is unclear. To avoid difficulties related to this aspect the target application that we will consider herein is the optical reconstruction problem in quantitative photo acoustic tomography (QPAT). This is a well studied imaging problem \cite{AS99,Arr99,BU10,BR11,TC23} which consists of two steps. First acoustic measurements on the boundary of the domain are used to reconstruct an initial pressure distribution $p_0$ by solving the acoustic unique continuation problem. It is known that the initial pressure is related to the absorbed optical energy $H(x)$ by the Gr\"uneisen parameter $\Gamma$ such that $p_0 = \Gamma H$. The energy $H$ finally enters the equation \eqref{eq:main} through the relation $H = \mu u$. The inverse problem that we are interested in is to reconstruct $\mu$ in some finite dimensional space, given $H$, or its projection on some other finite dimensional space. In its simplest form, and under idealized data assumptions, this inverse problem is linear. Indeed, given $H$ we can solve the elliptic problem for $u$ and then set $\mu = H/u$. This approach is however usually not possible due to lack of data or resolution of data. For a finite element analysis of the potential reconstruction problem using finite element methods with distributed data we refer to \cite{JLQZ23}.

Our prime objective in the present paper is to develop further the ideas of \cite{BLLL25} in the context of potential reconstruction in finite dimensional space. In \cite{BLLL25} we considered a boundary reconstruction problem for a non-linear elliptic problem. Given a collective data set of measurements on the boundary we first identified a low dimensional manifold of the target quantity using principal component analysis or auto encoders. A network was then trained to map from the target space to the associated finite element solution of the non-linear problem. The inverse problem was finally solved efficiently using the data to solution map. The cost of the training process makes the approach prohibitively expensive, unless the operator can be used sufficiently many times. Here the aim is to design a method that has much lower training cost, but can still deliver sufficiently accurate results to allow for fast optimization up to a certain accuracy of the minimizer. The idea is to construct a fast method that allows to explore the parameter space in a global optimization step, resulting in a first approximation not far from the global minimizer. If a more accurate solution to the inverse problem is required Newton iterations can be applied, this time solving the finite element problems during iteration \cite{BKO25}. To this end we will exploit that the non-linear reconstruction problem \eqref{eq:main} can be written as a linear problem in high dimensional space by identifying the $t_i$ in \eqref{eq:potential} with dimensions in a parameter space $\Omega_t$. We then use a finite element method to discretize the solution in $\Omega_x$. In the parameter space on the other hand one may use for example tensor product orthogonal polynomials, or as we do below, so called Extreme Learning Machines \cite{HZS04,HZS06}. This is based on collocation over network approximation spaces that use randomized linear basis functions inspired by the activation functions of neural networks. Extreme Learning Machines was first used for the approximation of PDEs in low dimensions in \cite{DS20,CFS21,PK21,DS22}. It was applied to the approximation of high dimensional PDE in \cite{WD23} and more recently some approximation theory was proposed \cite{DMSW25}. A key feature of the present work is that the proposed reduced-order models are supported by a rigorous analysis of parametric regularity and by explicit error estimates that distinguish between low-dimensional and high-dimensional parameter regimes.

\begin{figure}
\centering
\begin{minipage}[b]{0.35\linewidth}
\begin{subfigure}{\linewidth}
\centering
\includegraphics[width=0.6\textwidth]{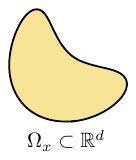}
\caption{Spatial domain}
\label{fig:physical-domain}
\end{subfigure}
\end{minipage}
\qquad
\begin{minipage}[b]{0.35\linewidth}
\begin{subfigure}{\linewidth}
\centering
\includegraphics[width=0.9\textwidth]{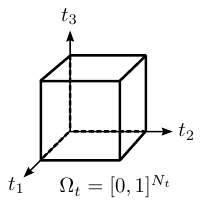}
\caption{Parameter domain}
\label{fig:param-domain}
\end{subfigure}
\end{minipage}
\caption{\textit{Domains.} \textbf{(a)} The spatial domain $\Omega_x$ is a bounded subset in $\IR^\Nx$ for $\Nx\in \{2,3\}$. \textbf{(b)} The parameter domain $\Omega_t$ is the hypercube $[0,1]^{\Nt}$ where $\Nt$ may be large.}
\label{fig:domains}
\end{figure}

\paragraph{Motivation for Interpolation-Based Modeling.}
Directly solving \eqref{eq:main} for many parameter values can be computationally prohibitive, particularly in higher-dimensional parameter spaces or when repeated evaluations are required (e.g., in inverse problems or optimization loops). 

A natural strategy is therefore to construct a surrogate model for the parameter-to-solution map \(t \mapsto u(\cdot,t)\), enabling fast evaluation once an initial offline stage has been completed. This leads to an offline--online decomposition: the surrogate is constructed using a set of precomputed solutions, after which new evaluations can be performed at significantly reduced cost. Importantly, the surrogate is given in an explicit form that is differentiable with respect to the parameters, which enables efficient gradient-based solution of inverse problems.

Such an approach is particularly advantageous when many queries of the solution map are needed or when real-time evaluation is required. In low-dimensional parameter spaces, classical interpolation or reduced-basis methods \cite{HRS16} can be effective. However, in higher dimensions, the curse of dimensionality motivates the use of approximation spaces whose complexity does not grow exponentially with \(\Nt\). 

In this work, we propose an interpolation-based approach that combines classical discretization techniques with a simple neural-network-based surrogate model. In particular, we employ Extreme Learning Machines (ELMs), which provide a computationally efficient and analytically tractable framework for high-dimensional interpolation. While more advanced learning approaches exist, the use of ELMs allows us to develop a rigorous approximation theory while retaining an explicit and differentiable surrogate representation. This approach combines:
\begin{itemize}
\item Finite element methods (FEM) for spatial discretization over \(\Omega_x\),
\item tensor-product or simplicial interpolation for lower-dimensional parameter spaces,
\item ELM-based interpolation for higher-dimensional parameter spaces.
\end{itemize}

\paragraph{Contributions and Significance.}
Our main contributions are:
\begin{itemize}
\item A unified FEM-based strategy for the spatial dimension and an interpolation approach for the parameter dimension, suitable for both low- and high-dimensional parameter spaces.
\item Rigorous regularity results for the parametric PDE and joint error estimates that quantify the interaction between spatial discretization and parameter approximation.
\item A distinction between low-dimensional parameter regimes, where classical interpolation yields algebraic convergence rates, and high-dimensional regimes, where ELM-based surrogates achieve dimension-independent approximation rates under explicit assumptions.
\item An application of the proposed framework to inverse problems in quantitative photoacoustic tomography, including provable potential and parameter reconstruction error bounds.
\end{itemize}
Throughout the paper, we assume basic familiarity with finite element theory and standard elliptic regularity. For brevity, generic constants are denoted by \(C\), with dependencies on domain geometry and problem coefficients omitted, and \(a \le C b\) is written \(a \lesssim b\).

\paragraph{Paper Outline.}
In Section \ref{sec:d-plus-n-formulation}, we present the full \(\Nx+\Nt\)-dimensional PDE formulation, including weak forms and regularity estimates. Section \ref{sec:interpolation-rom} introduces our interpolation-based reduced-order approach, focusing on how we handle high-dimensional parameter spaces via neural networks, and provides rigorous error estimates combining spatial and parametric approximation errors. In Section \ref{sec:qpat}, we show how this framework is applied to inverse problems in quantitative photoacoustic tomography. Section \ref{sec:numerical-experiments} presents numerical results illustrating accuracy and efficiency. Finally, Section \ref{sec:conclusions} summarizes the main contributions and discusses potential extensions.

\section{\boldmath The $\Nx+\Nt$ Dimensional Problem}
\label{sec:d-plus-n-formulation}

We rewrite the PDE on the product domain \(\mathcal{O}=\Omega_x\times\Omega_t\),
\begin{equation}
-\Delta u(x,t) + \mu(x,t)\,u(x,t) = f(x,t) \quad \text{for } (x,t)\in \mathcal{O}
\label{eq:reduced}
\end{equation}
with Dirichlet boundary conditions on \(\partial\Omega_x\), which we for simplicity assume to be homogeneous throughout the theoretical exposition. In contrast to \eqref{eq:main}, we here let \(f\) depend on the parameter \(t\) for later applicability in Section~\ref{sec:qpat}.
The weak formulation reads: find \(u\in W\) such that
\begin{equation}
(\nabla u,\nabla v)_{\mathcal{O}} + (\mu\,u, v)_{\mathcal{O}} = \langle f, v\rangle_{W'\times W} \quad \forall v\in W
\label{eq:weak-form}
\end{equation}
where
\begin{equation}
W = L^2(\Omega_t;H_0^1(\Omega_x)) \quad \text{and}\quad W' \text{ denotes its dual}
\label{eq:Wdef}
\end{equation}
the product-domain inner product is defined by
\begin{equation}
(v,w)_{\mathcal{O}} := \int_{\Omega_t}\!\int_{\Omega_x} v(x,t)\,w(x,t)\,dx\,dt
\label{eq:mcO-ip}
\end{equation}
and the duality pairing is defined by
\begin{equation}
\langle F,v\rangle_{W'\times W}:=\int_{\Omega_t}\langle F(\cdot,t),v(\cdot,t)\rangle_{H^{-1}(\Omega_x)\times H_0^1(\Omega_x)}\,dt
\end{equation}

We equip $W$ with the norm $\|v\|_W:=\|\nabla v\|_{L^2(\mathcal O)}$, which is equivalent to the standard Bochner norm on $L^2(\Omega_t;H_0^1(\Omega_x))$ by Poincar\'e's inequality; its dual $W'$ is canonically identified with $L^2(\Omega_t;H^{-1}(\Omega_x))$.
More explicitly, for a functional $F\in W'$ we define
\begin{equation}
\|F\|_{W'}:=\sup_{0\neq v\in W}\frac{\langle F,v\rangle_{W'\times W}}{\|v\|_{W}}
\end{equation}
Under the identification $W'\cong L^2(\Omega_t;H^{-1}(\Omega_x))$, this norm is equivalently given by
\begin{equation}
\|F\|_{W'}^2=\int_{\Omega_t}\|F(\cdot,t)\|_{H^{-1}(\Omega_x)}^2\,dt
\end{equation}

For functions $u(\cdot, t), v(\cdot , t)$ defined on $\Omega_x$ for fixed $t\in \Omega_t$, we denote by 
\begin{equation}
(u,v)_{\Omega_x}:=\int_{\Omega_x}u(x,t)v(x,t) \,dx
\label{eq:Omega-ip}
\end{equation}
the standard $L^2(\Omega_x)$ inner product, and by \(\|v\|_{\Omega_x}=\|v\|_{L^2(\Omega_x)}\) the standard $L^2(\Omega_x)$ norm.

\begin{ass}[Standing Assumptions] \label{ass:assumptions}
We assume:
\begin{itemize}
\item {\bf Domain and Boundary.}\ \ \(\Omega_x\subset\mathbb{R}^\Nx\) is either convex polygonal/polyhedral or a bounded \(C^{1,1}\) domain, with Dirichlet boundary conditions on \(\partial\Omega_x\).

\item {\bf Coefficients (Boundedness and Coercivity).}\ \ There exist constants \(0<\mu_{\min}\le \mu_{\max}<\infty\) such that
\begin{equation}
\mu_{\min} \le \mu(x,t) \le \mu_{\max} \quad \text{for a.e. } (x,t)\in \mathcal{O}
\label{eq:mu-bounds}
\end{equation}
where
\begin{equation}
\mu(x,t)=\mu_0(x)+\sum_{i=1}^{\Nt} t_i\,\mu_i(x) \quad \text{for } t\in\Omega_t
\label{eq:mu-affine-local}
\end{equation}
with \(\mu_i\in L^\infty(\Omega_x)\) for \(i=0,\dots,\Nt\).

\item {\bf Right-Hand Side.}\ \ $f=f(x,t)$ is affine in $t$, i.e.,
\begin{equation}
 f(x,t)=f_0(x)+\sum_{i=1}^{\Nt} t_i\,f_i(x)
\end{equation}
with $f_i\in H^{-1}(\Omega_x)$ for $i=0,\dots,\Nt$. Since $f$ is affine in $t$, we have $\partial_t^{\alpha}f\equiv 0$ for all $|\alpha|\ge 2$; in particular $f\in H^k(\Omega_t;H^{-1}(\Omega_x))$ for any $k\ge 0$.
\end{itemize}

\end{ass}

We then have the following existence and regularity results.

\begin{thm}[Existence, Uniqueness, Regularity]
\label{thm:regularity-Hm}
Under Assumption~\ref{ass:assumptions}, there exists a unique solution $u\in W$ to \eqref{eq:reduced}. Moreover, for every integer $k\ge 0$ the solution belongs to $H^k(\Omega_t;H_0^1(\Omega_x))$ and satisfies the estimate
\begin{equation}
\sum_{|\alpha|\le k}\|\partial_t^{\alpha}u\|_{L^2(\Omega_t;H_0^1(\Omega_x))}
\lesssim \ \ \ \sum_{\mathclap{|\alpha|\le \min(k,1)}}\|\partial_t^{\alpha}f\|_{L^2(\Omega_t;H^{-1}(\Omega_x))}
\label{eq:regularity-H1}
\end{equation}
with a constant independent of $t$.

If, in addition, $m\ge 2$, $f\in H^k(\Omega_t;H^{m-2}(\Omega_x))$,
$\mu_i\in W^{m-2,\infty}(\Omega_x)$ for $i=0,\dots,\Nt$, and the operator
$A(t):=-\Delta+\mu(\cdot,t)$ satisfies a uniform elliptic shift property of order $m$, i.e., for all $g\in H^{m-2}(\Omega_x)$, the solution $w\in H_0^1(\Omega_x)$ to
\(
A(t)w=g
\)
satisfies
\(
\|w\|_{H^m(\Omega_x)}\lesssim \|g\|_{H^{m-2}(\Omega_x)}
\)
with a constant independent of $t$.
Then for every integer $k\ge 0$ the solution belongs to $H^k(\Omega_t;H^m(\Omega_x))$ and satisfies
\begin{equation}
\sum_{|\alpha|\le k}\|\partial_t^{\alpha}u\|_{L^2(\Omega_t;H^m(\Omega_x))}
\lesssim \ \ \ \sum_{\mathclap{|\alpha|\le \min(k,1)}}\|\partial_t^{\alpha}f\|_{L^2(\Omega_t;H^{m-2}(\Omega_x))}
\label{eq:regularity-Hm-general}
\end{equation}
with a constant independent of $t$.
\end{thm}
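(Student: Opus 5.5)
Existence and uniqueness follow from Lax--Milgram on $W$. The bilinear form $a(u,v)=(\nabla u,\nabla v)_{\mathcal O}+(\mu u,v)_{\mathcal O}$ is bounded on $W\times W$, since $\mu\le\mu_{\max}$ and Poincar\'e's inequality holds on $\Omega_x$ uniformly in $t$. It is coercive with $a(v,v)\ge\|v\|_W^2$, because $\mu\ge\mu_{\min}>0$. The right-hand side $f$ lies in $W'\cong L^2(\Omega_t;H^{-1}(\Omega_x))$, being affine in $t$ with $H^{-1}$ coefficients. This gives a unique $u\in W$ with $\|u\|_W\le\|f\|_{W'}$, which is \eqref{eq:regularity-H1} for $k=0$.

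Next I would pass to the pointwise-in-$t$ picture. For a.e.\ $t$, $u(\cdot,t)\in H_0^1(\Omega_x)$ solves $A(t)u(\cdot,t)=f(\cdot,t)$: testing \eqref{eq:weak-form} with $v(x,t)=\phi(t)w(x)$ and using density of such tensor products gives this. Since $A(t)$ is uniformly coercive, the solution map $S(t)=A(t)^{-1}:H^{-1}\to H_0^1$ has norm at most $1$ uniformly in $t$, and $t\mapsto A(t)$ is affine, hence analytic, in the operator norm $\mathcal L(H_0^1,H^{-1})$. It follows that $u(t)=S(t)f(t)$ is smooth (indeed analytic) in $t$ as an $H_0^1$-valued function. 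Differentiating $A(t)u=f$ with the Leibniz rule, and using $\partial_{t_i}A=\mu_i$ and $\partial_t^\beta A=0$ for $|\beta|\ge2$, yields the recursion
\begin{equation*}
A(t)\,\partial_t^\alpha u=\partial_t^\alpha f-\sum_{i:\,\alpha_i\ge1}\alpha_i\,\mu_i\,\partial_t^{\alpha-e_i}u .
\end{equation*}
Since $\|\mu_i v\|_{H^{-1}}\lesssim\|\mu_i\|_{L^\infty}\|v\|_{H_0^1}$, induction on $|\alpha|$ gives pointwise bounds $\|\partial_t^\alpha u(t)\|_{H_0^1}\lesssim\|\partial_t^\alpha f(t)\|_{H^{-1}}+\sum_i\|\partial_t^{\alpha-e_i}u(t)\|_{H_0^1}$. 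Because $\partial_t^\alpha f=0$ for $|\alpha|\ge2$, unwinding the recursion bounds every derivative by $\|f(t)\|_{H^{-1}}+\sum_i\|f_i\|_{H^{-1}}$. Squaring and integrating over $\Omega_t$ then gives \eqref{eq:regularity-H1}, with the sum on the right restricted to $|\alpha|\le\min(k,1)$.

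To make the derivatives rigorous as weak $t$-derivatives in $L^2(\Omega_t;H_0^1)$, I would not rely directly on formal differentiation. Instead I would use the explicit formula $\partial_{t_i}u=S(t)(f_i-\mu_i u)$. This formula follows from the resolvent identity $S(t+he_i)-S(t)=-hS(t+he_i)\mu_iS(t)$: difference quotients then converge in $H_0^1$, uniformly in $t$, and iterating the argument handles higher derivatives.

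For the higher spatial regularity I would apply the same recursion with the shift property $\|S(t)g\|_{H^m}\lesssim\|g\|_{H^{m-2}}$. The extra ingredient is that multiplication by $\mu_i\in W^{m-2,\infty}$ maps $H^m\subset H^{m-2}$ into $H^{m-2}$ boundedly. Induction on $|\alpha|$ then gives $\|\partial_t^\alpha u(t)\|_{H^m}\lesssim\|\partial_t^\alpha f(t)\|_{H^{m-2}}+\sum_i\|\partial_t^{\alpha-e_i}u(t)\|_{H^m}$, and integrating over $\Omega_t$ yields \eqref{eq:regularity-Hm-general}. The main obstacle is the bookkeeping of constants: they grow like $\alpha!\,\max_i\|\mu_i\|^{|\alpha|}$ and depend on $k$ and $\Nt$, though never on $t$. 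This is acceptable because the statement hides them in $\lesssim$ and only claims independence of $t$.
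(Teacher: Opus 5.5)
Your proof is correct and has the same skeleton as the paper's. Both use Lax--Milgram on $W$, the differentiated recursion $A(t)\,\partial_t^\alpha u=\partial_t^\alpha f-\sum_i\alpha_i\mu_i\,\partial_t^{\alpha-e_i}u$, a pointwise energy (resp.\ elliptic shift) bound, induction on $|\alpha|$, and integration over $\Omega_t$.

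The genuine difference is how you justify differentiating in $t$. The paper defines $w_\alpha$ through the recursion and identifies it with $\partial_t^\alpha u$ via a difference-quotient characterization of Sobolev regularity for Hilbert-valued maps. That yields only weak parameter derivatives, so the paper needs Theorem~\ref{thm:Ck-continuity} separately to upgrade to classical differentiability. You instead use that $t\mapsto A(t)$ is affine in $\mathcal L(H_0^1,H^{-1})$ with uniformly bounded inverse. The resolvent identity then gives $\partial_{t_i}u=S(t)(f_i-\mu_iu)$ as a uniform-in-$t$ limit of difference quotients, and iterating yields $u\in C^\infty(\Omega_t;H_0^1(\Omega_x))$ at once. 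Your route is shorter and already contains the classical part of Theorem~\ref{thm:Ck-continuity}. The paper's route is the textbook weak-regularity argument and stays within the Bochner--Sobolev framework in which the statement is posed.

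Two minor remarks:
\begin{itemize}
\item Your multiplier bound $\|\mu_iv\|_{H^{-1}}\lesssim\|\mu_i\|_{L^\infty}\|v\|_{H_0^1}$ is the right one. The paper asserts boundedness of multiplication by $\mu_i$ on $H^{-1}$ itself, which fails for merely $L^\infty$ coefficients, although only the $L^2$ route is actually used.
\item Unwinding $a_\alpha=\sum_i\alpha_i\,a_{\alpha-e_i}$ gives constants growing like $|\alpha|!$ rather than $\alpha!$. This is harmless, since the hidden constants may depend on $k$ and $\Nt$.
\end{itemize}
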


\begin{rem}[Elliptic Shift Assumption]
\label{rem:elliptic-shift}
For $m=2$, the elliptic shift property required above holds under the standing assumptions that
$\Omega_x$ is convex polygonal/polyhedral or of class $C^{1,1}$ and $\mu(\cdot,t)\in L^\infty(\Omega_x)$ uniformly in $t$, by standard elliptic regularity.
For $m>2$, the shift property typically requires additional smoothness of the spatial domain and of the coefficient functions $\mu_i$, for instance $\Omega_x$ of class $C^{m-1,1}$ and $\mu_i\in W^{m-2,\infty}(\Omega_x)$.
\end{rem}

\begin{proof}
\proofparagraph{1. Existence and Uniqueness.}
Define the bilinear form $a(\cdot,\cdot)$ on $W\times W$ by
\begin{align}
\label{eq:bilinear}
a(u,v) = (\nabla u,\nabla v)_{\mathcal{O}} + (\mu u, v)_{\mathcal{O}}
\end{align}
By \eqref{eq:mu-bounds}, Cauchy--Schwarz, and Poincar\'e,
\begin{equation}
|a(u,v)|\le \|\nabla u\|_{\mathcal{O}}\,\|\nabla v\|_{\mathcal{O}}+\mu_{\max}\,\|u\|_{\mathcal{O}}\,\|v\|_{\mathcal{O}}\lesssim \|u\|_{W}\,\|v\|_{W}
\end{equation}
so $a$ is bounded on $W$. Moreover, using \eqref{eq:mu-bounds},
\begin{equation}
 a(v,v)=\|\nabla v\|_{\mathcal{O}}^2+(\mu v,v)_{\mathcal{O}}\ge \|\nabla v\|_{\mathcal{O}}^2+\mu_{\min}\,\|v\|_{\mathcal{O}}^2\gtrsim \|v\|_{W}^2
\end{equation}
so $a$ is coercive on $W$. Since the right-hand side functional $\ell(v):=\langle f,v\rangle_{W'\times W}$ is bounded on $W$ with $\|\ell\|_{W'}=\|f\|_{W'}$, Lax--Milgram yields a unique $u\in W$ solving \eqref{eq:weak-form}.

\proofparagraph{2. Differentiated Equations.}
Rather than assuming classical differentiability in $t$, we proceed as follows. We set $w_0(\cdot,t):=u(\cdot,t)$. For any multi-index $\alpha\in\mathbb{N}_0^{\Nt}$ with $|\alpha|\ge 1$, we first define $w_\alpha(\cdot,t)\in H_0^1(\Omega_x)$ as the unique solution of the problem \eqref{eq:multi-derivative-equation-Hm}--\eqref{eq:Falpha-Hm} below. A standard difference-quotient argument in the parameter variable then shows that these functions coincide with the weak parameter derivatives $\partial_t^{\alpha}u$. More precisely, for each coordinate direction $e_i$ and sufficiently small $\tau$, one considers the difference quotient
\begin{equation}
\delta_{\tau,i}u(\cdot,t):=\frac{u(\cdot,t+\tau e_i)-u(\cdot,t)}{\tau}
\end{equation}
on the shifted domain $\Omega_t\cap(\Omega_t-\tau e_i)$, derives from the weak formulation a uniform bound for $\delta_{\tau,i}u$ in $L^2(\Omega_t;H_0^1(\Omega_x))$, and then applies the standard difference-quotient characterization of Sobolev regularity to the Hilbert-valued map $t\mapsto u(\cdot,t)$ to conclude that $\partial_{t_i}u\in L^2(\Omega_t;H_0^1(\Omega_x))$, cf. \cite[\S5.8.2]{MR1625845}. Passing to the limit $\tau\to 0$ in the difference-quotient weak formulation yields the differentiated equation below. Iterating this argument yields the corresponding identities for higher-order derivatives $w_\alpha$.
Using the affine dependence \eqref{eq:mu-affine-local}, this leads for a.e. $t\in\Omega_t$ to
\begin{equation}
-\Delta w_\alpha(\cdot,t)+\mu(\cdot,t)\,w_\alpha(\cdot,t)=F_\alpha(\cdot,t)\quad\text{in }\Omega_x,\qquad w_\alpha(\cdot,t)|_{\partial\Omega_x}=0
\label{eq:multi-derivative-equation-Hm}
\end{equation}
where, for all $|\alpha|\ge 0$,
\begin{equation}
F_\alpha(\cdot,t)=\partial_t^{\alpha}f(\cdot,t)-\sum_{i=1}^{\Nt}\alpha_i\,\mu_i\,w_{\alpha-e_i}(\cdot,t)
\label{eq:Falpha-Hm}
\end{equation}
In the affine case, $\partial_t^{\alpha}f\equiv 0$ for all $|\alpha|\ge 2$.

In the remainder of the proof we therefore identify $w_\alpha$ with $\partial_t^{\alpha}u$.

\proofparagraph{3. Energy Estimate Argument ($m=1$).}
This argument is purely energy-based and does not rely on any elliptic regularity of the spatial operator.
Test \eqref{eq:multi-derivative-equation-Hm} with $w_\alpha(\cdot,t)$ and use \eqref{eq:mu-bounds}, duality, and Poincar\'e's inequality to obtain
\begin{equation}
\|\nabla w_\alpha(\cdot,t)\|_{L^2(\Omega_x)}^2+\mu_{\min}\,\|w_\alpha(\cdot,t)\|_{L^2(\Omega_x)}^2
\le \langle F_\alpha(\cdot,t),w_\alpha(\cdot,t)\rangle_{H^{-1}\times H_0^1}
\end{equation}
Hence
    \begin{equation}
    \|w_\alpha(\cdot,t)\|_{H_0^1(\Omega_x)}\lesssim \|F_\alpha(\cdot,t)\|_{H^{-1}(\Omega_x)}
    \label{eq:H1-energy-est}
    \end{equation}
Here we use that on $H_0^1(\Omega_x)$ the $H^1$- and $H_0^1$-norms are equivalent.
For $\alpha=0$, \eqref{eq:H1-energy-est} yields
\(\|u(\cdot,t)\|_{H_0^1(\Omega_x)}\lesssim \|f(\cdot,t)\|_{H^{-1}(\Omega_x)}.\)
Taking the $L^2(\Omega_t)$-norm gives
\begin{equation}
\|u\|_{L^2(\Omega_t;H_0^1(\Omega_x))}\lesssim \|f\|_{L^2(\Omega_t;H^{-1}(\Omega_x))}
\end{equation}
For $|\alpha|\ge 1$, by \eqref{eq:Falpha-Hm} we have
\begin{equation}
\|F_\alpha(\cdot,t)\|_{H^{-1}(\Omega_x)}
\le \|\partial_t^{\alpha}f(\cdot,t)\|_{H^{-1}(\Omega_x)}
+ \sum_{i=1}^{\Nt}\alpha_i\,\|\mu_i\|_{L^\infty(\Omega_x)}\,\|w_{\alpha-e_i}(\cdot,t)\|_{H_0^1(\Omega_x)}
\label{eq:Falpha-Hm-bound}
\end{equation}
since a multiplication by $\mu_i\in L^\infty(\Omega_x)$ defines a bounded operator on $H^{-1}(\Omega_x)$ (i.e., $\|\mu_i v\|_{H^{-1}}\lesssim \|\mu_i\|_{L^\infty}\,\|v\|_{H^{-1}}$) and we have continuous embeddings $H^1(\Omega_x)\hookrightarrow L^2(\Omega_x)\hookrightarrow H^{-1}(\Omega_x)$. 
Taking $L^2(\Omega_t)$-norms in \eqref{eq:H1-energy-est}--\eqref{eq:Falpha-Hm-bound} and arguing by induction on $|\alpha|$ yields
\begin{equation}
\sum_{|\alpha|\le k}\|\partial_t^{\alpha}u\|_{L^2(\Omega_t;H_0^1(\Omega_x))}
\lesssim \sum_{|\alpha|\le \min(k,1)}\|\partial_t^{\alpha}f\|_{L^2(\Omega_t;H^{-1}(\Omega_x))}
\end{equation}
which is \eqref{eq:regularity-H1}.

\proofparagraph{4. Elliptic Shift Argument ($m\ge 2$).}
Assume that the operator $A(t)=-\Delta+\mu(\cdot,t)$ satisfies a uniform elliptic shift property of order $m\ge 2$.
Applying this property to \eqref{eq:multi-derivative-equation-Hm} yields
\begin{equation}
\|w_\alpha(\cdot,t)\|_{H^m(\Omega_x)}\lesssim \|F_\alpha(\cdot,t)\|_{H^{m-2}(\Omega_x)}
\label{eq:Hm-via-shift}
\end{equation}
with a constant independent of $t$.

For $\alpha=0$ this gives
\begin{equation}
\|u\|_{L^2(\Omega_t;H^m(\Omega_x))}\lesssim \|f\|_{L^2(\Omega_t;H^{m-2}(\Omega_x))}
\end{equation}
For $|\alpha|\ge 1$, using \eqref{eq:Falpha-Hm} and assuming $\mu_i\in W^{m-2,\infty}(\Omega_x)$ yields
\begin{align}
\|F_\alpha(\cdot,t)\|_{H^{m-2}(\Omega_x)}
&\le \|\partial_t^{\alpha}f(\cdot,t)\|_{H^{m-2}(\Omega_x)}
\\&\quad
+ \sum_{i=1}^{\Nt}\alpha_i\,\|\mu_i\|_{W^{m-2,\infty}(\Omega_x)}
\,\|w_{\alpha-e_i}(\cdot,t)\|_{H^m(\Omega_x)}
\label{eq:Falpha-Hm-bound-general}
\end{align}
Taking $L^2(\Omega_t)$-norms in \eqref{eq:Hm-via-shift} and \eqref{eq:Falpha-Hm-bound-general}, and arguing by induction on $|\alpha|$, yields
\begin{equation}
\sum_{|\alpha|\le k}\|\partial_t^{\alpha}u\|_{L^2(\Omega_t;H^m(\Omega_x))}
\lesssim \sum_{|\alpha|\le \min(k,1)}\|\partial_t^{\alpha}f\|_{L^2(\Omega_t;H^{m-2}(\Omega_x))}
\end{equation}
which proves \eqref{eq:regularity-Hm-general}.
\end{proof}

\begin{rem}[From Weak to Classical Parameter Regularity]
\label{rem:weak-to-classical-parameter-regularity}
Theorem~\ref{thm:regularity-Hm} provides weak parameter regularity in the Bochner--Sobolev sense, namely that the solution satisfies $u\in H^k(\Omega_t;H_0^1(\Omega_x))$, and, under the additional assumptions, $u\in H^k(\Omega_t;H^m(\Omega_x))$. In Theorem~\ref{thm:Ck-continuity} below, this weak parameter regularity is upgraded to classical $C^k$-regularity of the solution map $t\mapsto u(\cdot,t)$.
\end{rem}

\begin{rem}[Non-Affine Right-Hand Side]
The affine dependence of $f$ on $t$ is used only to simplify the regularity analysis, as it implies that higher-order parameter derivatives of $f$ vanish. The surrogate construction itself does not rely on this assumption. For non-affine dependence, the same arguments apply under corresponding Sobolev regularity assumptions on $f$, at the expense of more involved expressions in the parameter-derivative estimates.
\end{rem}

\begin{rem}[CutFEM and Smooth Geometry]
\label{rem:cutfem-geometry}
The regularity results above are statements about the \emph{continuous} boundary value problem posed on the spatial domain $\Omega_x$ and therefore do not depend on whether the discretization is mesh-fitted or unfitted. In particular, if $\Omega_x$ is a $C^{1,1}$ domain (and the coefficients satisfy the assumptions of the regularity theorem), the solution enjoys the stated $H^2$-regularity implied by the elliptic shift property for $m=2$, irrespective of whether one uses standard fitted finite elements or CutFEM \cite{MR3416285}. In CutFEM implementations based on an exact (or sufficiently accurate) geometry description and sufficiently accurate quadrature on cut cells \cite{MR3682761,MR3338676}, geometry and quadrature errors can be made higher order, so that they do not mask this continuous-regularity property in the discretization error.
\end{rem}

The previous theorem provides weak parameter regularity of the solution in the Bochner--Sobolev sense. We now show that, under the same assumptions, the solution map is in fact classically differentiable in the parameter variable and satisfies quantitative Lipschitz bounds.

\begin{thm}[Regularity and Continuity in Parameter Variable]
\label{thm:Ck-continuity}
Under Assumption~\ref{ass:assumptions}, the solution mapping
\begin{equation}
t \mapsto u(\cdot, t)
\label{eq:solution-map}
\end{equation}
to \eqref{eq:reduced} is \(C^k\bigl(\Omega_t;H_0^1(\Omega_x)\bigr)\) for any integer \(k\ge 0\). Moreover, for all multi-indices \(\alpha\) with \(|\alpha|\le k\) and all \(t^{(1)},t^{(2)}\in\Omega_t\),
    \begin{equation}
    \bigl\|\partial_t^{\alpha}u(\cdot, t^{(1)})-\partial_t^{\alpha}u(\cdot,t^{(2)})\bigr\|_{H_0^1(\Omega_x)} \le C_{\alpha}\,\|t^{(1)}-t^{(2)}\|_{\mathbb{R}^{\Nt}}
    \label{eq:ck-continuity-estimate}
    \end{equation}
where $C_{\alpha}$ depends only on the bounds in \eqref{eq:mu-bounds}, the coefficient functions $\mu_i$, the domain geometry, and the affine coefficients $f_i$.
\end{thm}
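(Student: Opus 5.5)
Rather than upgrading the Bochner--Sobolev regularity of Theorem~\ref{thm:regularity-Hm} through Sobolev embeddings in $t$ (which would need $k>\Nt/2$ extra derivatives and constants depending on $\Nt$), we argue pointwise in $t$. For each fixed $t\in\Omega_t$, let $A(t)=-\Delta+\mu(\cdot,t):H_0^1(\Omega_x)\to H^{-1}(\Omega_x)$. By \eqref{eq:mu-bounds} and Lax--Milgram, $A(t)$ is an isomorphism with $\|A(t)^{-1}\|_{H^{-1}\to H_0^1}\le C$ uniformly in $t$. The map $t\mapsto A(t)$ is affine, since $A(t)=A_0+\sum_i t_i M_i$ with $M_i$ denoting multiplication by $\mu_i$, which is bounded $H_0^1\to H^{-1}$. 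Hence $t\mapsto A(t)$ is $C^\infty$, in fact real-analytic, into $\mathcal L(H_0^1,H^{-1})$. Inversion is smooth on the open set of isomorphisms, so $t\mapsto A(t)^{-1}$ is $C^\infty$ into $\mathcal L(H^{-1},H_0^1)$, with $\partial_{t_i}A^{-1}=-A^{-1}M_iA^{-1}$. Since $f(\cdot,t)$ is affine in $t$, the map $u(\cdot,t)=A(t)^{-1}f(\cdot,t)$ is $C^\infty$, hence $C^k$, into $H_0^1(\Omega_x)$ for every $k$. We also check that this pointwise solution agrees a.e.\ with the solution $u\in W$ of Theorem~\ref{thm:regularity-Hm}. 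This follows because the pointwise solution is measurable (indeed continuous) and satisfies the weak form for a.e.\ $t$, so it solves \eqref{eq:weak-form}, and uniqueness in $W$ then identifies the two.

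Next we get explicit derivative bounds. Differentiating $A(t)u(t)=f(t)$ classically reproduces \eqref{eq:multi-derivative-equation-Hm}--\eqref{eq:Falpha-Hm} pointwise in $t$, with $w_\alpha=\partial_t^\alpha u$. The energy estimate \eqref{eq:H1-energy-est} and the bound \eqref{eq:Falpha-Hm-bound} then hold for every $t$, not just in $L^2(\Omega_t)$. Induction on $|\alpha|$ gives
\begin{equation*}
\sup_{t\in\Omega_t}\|\partial_t^\alpha u(\cdot,t)\|_{H_0^1(\Omega_x)}\le K_\alpha,
\end{equation*}
where $K_\alpha$ depends only on $\mu_{\min}$, $\|\mu_i\|_{L^\infty}$, the Poincar\'e constant, and $\max_i\|f_i\|_{H^{-1}}$. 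Here we use $\sup_t\|f(\cdot,t)\|_{H^{-1}}\le\sum_i\|f_i\|_{H^{-1}}$ and $\partial_{t_i}f=f_i$.

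The Lipschitz estimate \eqref{eq:ck-continuity-estimate} then follows from the mean value inequality in Banach spaces. The cube $\Omega_t$ is convex, so the segment between $t^{(1)}$ and $t^{(2)}$ lies in $\Omega_t$, and
\begin{equation*}
\bigl\|\partial_t^\alpha u(t^{(1)})-\partial_t^\alpha u(t^{(2)})\bigr\|_{H_0^1}\le \sup_{s}\Bigl(\sum_{i}\|\partial_t^{\alpha+e_i}u(s)\|_{H_0^1}^2\Bigr)^{1/2}\|t^{(1)}-t^{(2)}\|.
\end{equation*}
This is bounded by $C_\alpha=\sqrt{\Nt}\max_i K_{\alpha+e_i}$, or by $(\sum_i K_{\alpha+e_i}^2)^{1/2}$ with a sharper bookkeeping. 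Alternatively, one can avoid the derivative of order $|\alpha|+1$ and compare $w_\alpha(t^{(1)})$ and $w_\alpha(t^{(2)})$ directly. Subtracting the two equations gives $A(t^{(1)})\bigl(w_\alpha(t^{(1)})-w_\alpha(t^{(2)})\bigr)=F_\alpha(t^{(1)})-F_\alpha(t^{(2)})-(\mu(t^{(1)})-\mu(t^{(2)}))w_\alpha(t^{(2)})$. The last term is bounded by $\|t^{(1)}-t^{(2)}\|$ times $\max\|\mu_i\|_{L^\infty}$, and induction on $|\alpha|$ handles $F_\alpha$.

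The main obstacle is conceptual rather than computational. One must justify that the $W$-solution has a representative that is classically differentiable, and that boundary points of $\Omega_t$ cause no trouble. The boundary points are handled because $\mu(x,t)\ge\mu_{\min}$ is only needed on $\Omega_t$ itself. Since $\mu_{\min}>0$ is a strict bound, it persists on a neighborhood of $\Omega_t$ in $\mathbb{R}^{\Nt}$ by $L^\infty$-continuity of $t\mapsto\mu(\cdot,t)$. So $A(t)^{-1}$ is defined and smooth on an open set containing $\Omega_t$, and one-sided derivatives at the faces are unproblematic.
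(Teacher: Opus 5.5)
Your proof is correct, but it takes a different route from the paper.

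\textbf{The paper's route.} The paper does not use smoothness of operator inversion for the continuous problem. It subtracts the weak formulations at \(t^{(1)}\) and \(t^{(2)}\), tests with \(u^{(1)}-u^{(2)}\), and combines coercivity with the uniform Lax--Milgram bound to get the Lipschitz estimate for \(|\alpha|=0\). It then repeats this on the differentiated equations \eqref{eq:multi-derivative-equation-Hm}--\eqref{eq:Falpha-Hm}. This produces a Lipschitz representative of each weak derivative \(\partial_t^\alpha u\) supplied by Theorem~\ref{thm:regularity-Hm}, which the paper then identifies with the classical derivative. So your ``alternative'' difference argument is the paper's main engine, paired with a weak-to-classical upgrade.

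\textbf{Your route.} You get classical \(C^\infty\) (indeed real-analytic) dependence directly from \(A(t)=A_0+\sum_i t_iM_i\) and analyticity of inversion on the open set of isomorphisms \(H_0^1(\Omega_x)\to H^{-1}(\Omega_x)\). This is the operator-level version of what the paper does for the matrix \(\mathbf{A}(t)\) in Theorem~\ref{thm:Hk-regularity-fem}. The only identification needed is the single a.e.\ matching with the \(W\)-solution, which you supply. Pointwise energy estimates then give sup-in-\(t\) bounds, and the mean value inequality on the convex cube gives \eqref{eq:ck-continuity-estimate}.

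\textbf{What each buys.} Your argument is more self-contained, since it does not rely on the sketched difference-quotient argument behind Theorem~\ref{thm:regularity-Hm}. It also treats the faces of \(\Omega_t\) explicitly and gives uniform rather than \(L^2(\Omega_t)\) bounds. The paper's argument stays inside the energy-estimate framework used throughout. It bounds the Lipschitz constant of \(\partial_t^\alpha u\) using derivatives only up to order \(|\alpha|\). Your mean value step uses order \(|\alpha|+1\), which is harmless here because all orders are bounded.

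\textbf{One small point to make explicit.} Assumption~\ref{ass:assumptions} gives \(\mu\ge\mu_{\min}\) only for a.e.\ \((x,t)\). The set of \(t\) with \(\mu(\cdot,t)\ge\mu_{\min}\) a.e.\ in \(\Omega_x\) is closed, by \(L^\infty\)-continuity in \(t\). It also has full measure in \(\Omega_t\), by Fubini. Hence it equals \(\Omega_t\), so \(A(t)\) is coercive for every \(t\in\Omega_t\) and your neighborhood argument applies.
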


\begin{proof}
For each $t\in\Omega_t$, let $u(\cdot,t)\in H_0^1(\Omega_x)$ denote the unique weak solution to \eqref{eq:reduced}. By Theorem~\ref{thm:regularity-Hm}, the solution map possesses weak parameter regularity in the Bochner--Sobolev sense. We now show that it is in fact classically differentiable in $t$ and satisfies the Lipschitz estimate \eqref{eq:ck-continuity-estimate}.
Let $t^{(1)},t^{(2)}\in\Omega_t$ and set $u^{(i)}:=u(\cdot,t^{(i)})\in H_0^1(\Omega_x)$, $i=1,2$. For $i=1,2$, $u^{(i)}$ satisfies the weak formulation
\begin{equation}
(\nabla u^{(i)},\nabla v)_{\Omega_x} + (\mu(\cdot,t^{(i)})u^{(i)},v)_{\Omega_x}
    = \langle f(\cdot,t^{(i)}),v\rangle_{H^{-1}(\Omega_x)\times H_0^1(\Omega_x)}
\qquad \forall v\in H_0^1(\Omega_x)
\end{equation}
Subtracting the two weak formulations yields
\begin{align}
&(\nabla(u^{(1)}-u^{(2)}),\nabla v)_{\Omega_x} + (\mu(\cdot,t^{(1)})(u^{(1)}-u^{(2)}),v)_{\Omega_x} \nonumber
\\&\qquad\qquad \label{eq:ck-difference-weak}
= -\sum_{j=1}^{\Nt}(t_j^{(1)}-t_j^{(2)})(\mu_j u^{(2)},v)_{\Omega_x}
\\&\qquad\qquad\quad\ \nonumber
+ \langle f(\cdot,t^{(1)})-f(\cdot,t^{(2)}),v\rangle_{H^{-1}(\Omega_x)\times H_0^1(\Omega_x)}
\qquad \forall v\in H_0^1(\Omega_x)
\end{align}
Choosing $v=u^{(1)}-u^{(2)}$ in \eqref{eq:ck-difference-weak} and using coercivity \eqref{eq:mu-bounds} yields
\begin{align}
\|u^{(1)}-u^{(2)}\|_{H_0^1(\Omega_x)}
&\lesssim
\Bigl|\sum_{j=1}^{\Nt}(t_j^{(1)}-t_j^{(2)})(\mu_j u^{(2)},u^{(1)}-u^{(2)})_{\Omega_x}\Bigr|
\\&\quad\nonumber
+\bigl|\langle f(\cdot,t^{(1)})-f(\cdot,t^{(2)}),u^{(1)}-u^{(2)}\rangle_{H^{-1}(\Omega_x)\times H_0^1(\Omega_x)}\bigr|
\end{align}
The first term is bounded by Cauchy--Schwarz and the embedding $H_0^1(\Omega_x)\hookrightarrow L^2(\Omega_x)$ as
\begin{align}
&\Bigl|\sum_{j=1}^{\Nt}(t_j^{(1)}-t_j^{(2)})(\mu_j u^{(2)},u^{(1)}-u^{(2)})_{\Omega_x}\Bigr|
\nonumber
\\&\qquad\qquad\quad
\le \|t^{(1)}-t^{(2)}\|_{\mathbb{R}^{\Nt}}\Bigl\|\Bigl(\sum_{j=1}^{\Nt}\mu_j^2\Bigr)^{1/2}u^{(2)}\Bigr\|_{L^2(\Omega_x)}\,\|u^{(1)}-u^{(2)}\|_{H_0^1(\Omega_x)}
\end{align}
The second term is estimated by duality as
\begin{align}
&\bigl|\langle f(\cdot,t^{(1)})-f(\cdot,t^{(2)}),u^{(1)}-u^{(2)}\rangle_{H^{-1}(\Omega_x)\times H_0^1(\Omega_x)}\bigr|
\nonumber
\\&\qquad\qquad\qquad
\le \|f(\cdot,t^{(1)})-f(\cdot,t^{(2)})\|_{H^{-1}(\Omega_x)}\,\|u^{(1)}-u^{(2)}\|_{H_0^1(\Omega_x)}
\end{align}
Since $f$ is affine in $t$, we have
\begin{equation}
\|f(\cdot,t^{(1)})-f(\cdot,t^{(2)})\|_{H^{-1}(\Omega_x)}
\le \|t^{(1)}-t^{(2)}\|_{\mathbb{R}^{\Nt}}\sum_{j=1}^{\Nt}\|f_j\|_{H^{-1}(\Omega_x)}
\end{equation}
Using the pointwise stability estimate from Lax--Milgram, $\|u(\cdot,t)\|_{H_0^1(\Omega_x)}\lesssim \|f(\cdot,t)\|_{H^{-1}(\Omega_x)}$, together with the boundedness of the affine map $t\mapsto f(\cdot,t)$ on $\Omega_t$, and cancelling the common factor $\|u^{(1)}-u^{(2)}\|_{H_0^1(\Omega_x)}$ yields
\begin{equation}
\|u^{(1)}-u^{(2)}\|_{H_0^1(\Omega_x)}
\lesssim \|t^{(1)}-t^{(2)}\|_{\mathbb{R}^{\Nt}}
\label{eq:lipschitz-final}
\end{equation}
which proves Lipschitz continuity for $|\alpha|=0$.
The same argument applied to the weakly differentiated equation \eqref{eq:multi-derivative-equation-Hm} shows that, for each coordinate direction $e_i$, the weak derivative $\partial_{t_i}u$ admits an $H_0^1(\Omega_x)$-valued representative satisfying the Lipschitz estimate
\begin{equation}
\|\partial_{t_i}u(\cdot,t^{(1)})-\partial_{t_i}u(\cdot,t^{(2)})\|_{H_0^1(\Omega_x)}
\lesssim \|t^{(1)}-t^{(2)}\|_{\mathbb{R}^{\Nt}}
\end{equation}
In particular, this representative is continuous in $t$. Since $\partial_{t_i}u$ is already known to exist in the weak sense by Theorem~\ref{thm:regularity-Hm}, this representative coincides with the classical derivative of the solution map $t\mapsto u(\cdot,t)$. Iterating the same argument for higher-order weak derivatives $\partial_t^\alpha u$ yields \eqref{eq:ck-continuity-estimate} for all $|\alpha|\le k$. Since $f$ is affine in $t$ and therefore $\partial_t^\alpha f\equiv 0$ for all $|\alpha|\ge 2$, the induction closes.
\end{proof}

\section{\boldmath Interpolation-Based Reduced-Order Approximation}
\label{sec:interpolation-rom}

\subsection{Finite Element Method}
\label{subsec:fem}

We discretize the spatial domain \(\Omega_x\) using a finite element mesh \(\mathcal{T}_{h_x}\) with mesh size \(h_x\), see Figure~\ref{fig:domain-mesh}. Let \(V_{h_x} \subset H_0^1(\Omega_x)\) be the associated finite element space. For each parameter value \(t\in\Omega_t\), the finite element approximation \(u_{h_x}(\cdot,t)\in V_{h_x}\) to the solution \(u(\cdot,t)\) is defined by
\begin{equation}
(\nabla u_{h_x}(\cdot,t),\nabla v)_{\Omega_x} + (\mu(\cdot,t)\,u_{h_x}(\cdot,t),v)_{\Omega_x}
= \langle f(\cdot,t),v\rangle_{H^{-1}(\Omega_x)\times H_0^1(\Omega_x)} \quad \forall v\in V_{h_x}
\label{eq:fem}
\end{equation}
where $f=f(x,t)\in H^{-1}(\Omega_x)$ may depend on the parameter $t$.

Let \(\{\varphi_k\}_{k=1}^{K}\) denote the basis of \(V_{h_x}\). The discrete solution can then be written as
\begin{equation}
u_{h_x}(x,t) = \sum_{k=1}^{K} u_k(t)\,\varphi_k(x)
\label{eq:fem-solution-expansion}
\end{equation}
where the coefficient vector \(\mathbf{u}(t)=(u_1(t),\dots,u_{K}(t))^\top\) is determined by solving the linear system of equations associated with \eqref{eq:fem} for each $t\in \Omega_t$. Later on we will need that the finite element coefficients $u_k(t)$ are sufficiently regular (in Sobolev spaces) in the parameter variable $t$. This we prove in the following theorem and subsequent lemma.

\begin{figure}
\centering
\includegraphics[width=0.25\linewidth]{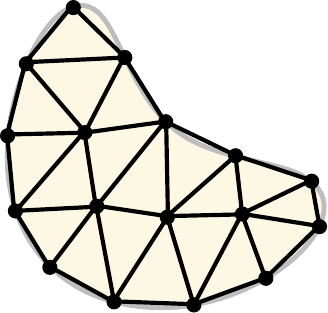}
\caption{\emph{Spatial domain discretization}. The spatial domain $\Omega_x$ is discretized using a finite element mesh $\mathcal{T}_{h_x}$ of mesh size $h_x$.}
\label{fig:domain-mesh}
\end{figure}

\begin{thm}[Parameter Regularity of Finite Element Solution]
\label{thm:Hk-regularity-fem}
For each $t\in\Omega_t$, let $u_{h_x}(\cdot,t)\in V_{h_x}$ denote the unique Galerkin solution of \eqref{eq:fem}.
Under Assumption~\ref{ass:assumptions}, the mapping $t\mapsto u_{h_x}(\cdot,t)$ belongs to
\(H^k\bigl(\Omega_t;V_{h_x}\bigr)\subset H^k\bigl(\Omega_t;H_0^1(\Omega_x)\bigr)\) for every integer $k\ge 0$. More precisely, for every multi-index $\alpha$ with $|\alpha|\le k$,
\(\partial_t^{\alpha}u_{h_x}\in L^2\bigl(\Omega_t;V_{h_x}\bigr)\)
with the stability estimate
\begin{equation}
\sum_{|\alpha|\le k}\|\partial_t^{\alpha}u_{h_x}\|_{L^2(\Omega_t;H_0^1(\Omega_x))}
\le C_k\,\Bigl(\|f\|_{W'}+\sum_{i=1}^{\Nt}\|\partial_{t_i}f\|_{W'}\Bigr)
\label{eq:Hk-regularity-est-fem}
\end{equation}
where $C_k$ depends only on $\Omega_x$, $\mu_{\min}$, $\mu_{\max}$ and $\{\|\mu_i\|_{L^\infty(\Omega_x)}\}_{i=0}^{\Nt}$ (and on $k$), but is independent of $t$ and of the mesh parameter $h_x$.
\end{thm}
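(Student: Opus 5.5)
The plan is to mirror Steps 2--3 of the proof of Theorem~\ref{thm:regularity-Hm}, with the space $H_0^1(\Omega_x)$ replaced by the finite dimensional subspace $V_{h_x}$. The key point is that only coercivity and boundedness of the Galerkin form on $V_{h_x}$ enter; no elliptic regularity is used. This is what makes all constants independent of $h_x$.

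\textbf{Parameter regularity of the coefficients.} First, for fixed $t$, write \eqref{eq:fem} as the linear system $\mathbf{A}(t)\mathbf{u}(t)=\mathbf{b}(t)$. Here $\mathbf{A}(t)=\mathbf{K}+\mathbf{M}_0+\sum_i t_i\mathbf{M}_i$, with $\mathbf{K}$ the stiffness matrix and $\mathbf{M}_i$ the weighted mass matrices of $\mu_i$, and $\mathbf{b}(t)=\mathbf{b}_0+\sum_i t_i\mathbf{b}_i$. Both are affine in $t$. By \eqref{eq:mu-bounds}, the Galerkin form is coercive on $V_{h_x}$ uniformly in $t$, so $\mathbf{A}(t)$ is symmetric positive definite for every $t\in\Omega_t$. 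Hence $t\mapsto\mathbf{u}(t)=\mathbf{A}(t)^{-1}\mathbf{b}(t)$ is a rational function of $t$ with nonvanishing denominator on the compact set $\Omega_t$, and is therefore $C^\infty$, indeed real analytic, on $\Omega_t$. This settles differentiability immediately and avoids the difference-quotient argument of Theorem~\ref{thm:regularity-Hm}. Consequently $\partial_t^\alpha u_{h_x}(\cdot,t)=\sum_k\partial_t^\alpha u_k(t)\varphi_k\in V_{h_x}$ exists classically and is continuous in $t$, hence lies in $L^2(\Omega_t;V_{h_x})$.

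\textbf{Differentiated Galerkin equations.} Second, differentiate \eqref{eq:fem} in $t$; this is legitimate because the test functions $v\in V_{h_x}$ are independent of $t$. Using the affine structure, $w_{h,\alpha}:=\partial_t^\alpha u_{h_x}$ satisfies
\begin{equation}
(\nabla w_{h,\alpha},\nabla v)_{\Omega_x}+(\mu(\cdot,t)w_{h,\alpha},v)_{\Omega_x}=\langle\partial_t^\alpha f,v\rangle-\sum_{i=1}^{\Nt}\alpha_i(\mu_i w_{h,\alpha-e_i},v)_{\Omega_x}\quad\forall v\in V_{h_x}.
\end{equation}
This is the discrete analogue of \eqref{eq:multi-derivative-equation-Hm}--\eqref{eq:Falpha-Hm}, and $\partial_t^\alpha f=0$ for $|\alpha|\ge2$.

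\textbf{Energy estimates and induction.} Third, test with $v=w_{h,\alpha}$ and use coercivity together with Poincar\'e's inequality, exactly as in \eqref{eq:H1-energy-est}. This gives
\begin{equation}
\|w_{h,\alpha}(\cdot,t)\|_{H_0^1}\lesssim\|\partial_t^\alpha f(\cdot,t)\|_{H^{-1}}+\sum_i\alpha_i\|\mu_i\|_{L^\infty}\|w_{h,\alpha-e_i}(\cdot,t)\|_{H_0^1},
\end{equation}
with a constant depending only on $\mu_{\min}$ and the Poincar\'e constant of $\Omega_x$. The pairing $(\mu_i w,v)$ is bounded by $\|\mu_i\|_{L^\infty}\|w\|_{L^2}\|v\|_{L^2}$ and then by Poincar\'e. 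Squaring, integrating over $\Omega_t$, and inducting on $|\alpha|$ from the base case $\|u_{h_x}\|_{L^2(\Omega_t;H_0^1)}\lesssim\|f\|_{W'}$ yields \eqref{eq:Hk-regularity-est-fem}. Only $f$ and the first derivatives $\partial_{t_i}f$ appear on the right, since higher derivatives of $f$ vanish. The constant $C_k$ grows combinatorially in $k$ and in $\sum_i\|\mu_i\|_{L^\infty}$, but never involves $h_x$.

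\textbf{Main obstacle.} The only delicate point is justifying differentiation under the discrete problem with constants uniform in $h_x$. The explicit matrix representation handles existence of the derivatives, though the smoothness it gives is only qualitative, since $\|\mathbf{A}(t)^{-1}\|$ depends on $h_x$. All quantitative bounds must therefore come from the energy argument in $H_0^1$-norms, never from matrix norms. Stated this way, that step is straightforward.
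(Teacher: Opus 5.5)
Your proposal is correct and follows essentially the same route as the paper. You obtain qualitative $C^\infty$ smoothness in $t$ from the affine linear system $\mathbf{A}(t)\mathbf{u}(t)=\mathbf{F}(t)$, with $\mathbf{A}(t)$ invertible by uniform coercivity, then differentiate the Galerkin equations, test with $\partial_t^\alpha u_{h_x}$, and induct on $|\alpha|$ using $\partial_t^\alpha f\equiv 0$ for $|\alpha|\ge 2$. Your remark that the quantitative bounds must come from the $H_0^1$ energy argument and not from $\|\mathbf{A}(t)^{-1}\|$, which depends on $h_x$, is exactly why $C_k$ is mesh-independent; the paper leaves this point implicit.
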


\begin{proof}
For $k=0$, the claim follows directly from Lax--Milgram applied to \eqref{eq:fem}, yielding the stability estimate $\|u_{h_x}\|_{L^2(\Omega_t;H_0^1(\Omega_x))}\lesssim \|f\|_{W'}$; hence we assume $k\ge 1$ in the remainder of the proof.
We work with the discrete weak formulation \eqref{eq:fem}. For a.e. $t\in\Omega_t$, define the bilinear form on $V_{h_x}$
\begin{equation}
    a_t(p,q):=(\nabla p,\nabla q)_{\Omega_x}+(\mu(\cdot,t)p,q)_{\Omega_x}
\end{equation}
and the linear functional $\ell_t(q):=\langle f(\cdot,t),q\rangle_{H^{-1}(\Omega_x)\times H_0^1(\Omega_x)}$. By \eqref{eq:mu-bounds}, $a_t$ is bounded and coercive uniformly in $t$ (and hence also when restricted to $V_{h_x}$), with coercivity constant inherited from $H_0^1(\Omega_x)$ and therefore independent of $h_x$.

\proofparagraph{1. Classical Parameter Regularity.}
Let $\mathbf{A}(t)\mathbf{u}(t)=\mathbf{F}(t)$ denote the linear system equivalent to \eqref{eq:fem} in the basis $\{\varphi_j\}_{j=1}^K$.
By the affine dependence of $\mu(\cdot,t)$ and $f(\cdot,t)$, both $\mathbf{A}(t)$ and $\mathbf{F}(t)$ depend affinely on $t$.
By uniform coercivity of $a_t$, $\mathbf{A}(t)$ is invertible for all $t\in\Omega_t$, and the map $t\mapsto \mathbf{A}(t)^{-1}$ is smooth. Hence
\begin{equation}
\mathbf{u}(t)=\mathbf{A}(t)^{-1}\mathbf{F}(t)
\end{equation}
The map $t\mapsto \mathbf{u}(t)$ therefore belongs to $C^\infty(\Omega_t;\mathbb{R}^K)$, and hence $t\mapsto u_{h_x}(\cdot,t)$ is smooth in $t$, in particular $C^k$ as a map into $V_{h_x}$.

\proofparagraph{2. Differentiation in $t$.}
Differentiating \eqref{eq:fem} with respect to $t_i$ yields, for all $v\in V_{h_x}$,
\begin{equation}
a_t(\partial_{t_i}u_{h_x},v)
=-(\mu_i u_{h_x},v)_{\Omega_x}
+\langle \partial_{t_i}f,v\rangle_{H^{-1}(\Omega_x)\times H_0^1(\Omega_x)}
\label{eq:weak-derivative-i-fem-classical}
\end{equation}
Testing with $v=\partial_{t_i}u_{h_x}(\cdot,t)$ and using coercivity gives
\begin{equation}
\|\partial_{t_i}u_{h_x}(\cdot,t)\|_{H_0^1(\Omega_x)}
\lesssim \|\mu_i\|_{L^\infty(\Omega_x)}\,\|u_{h_x}(\cdot,t)\|_{H_0^1(\Omega_x)}
+ \|\partial_{t_i}f(\cdot,t)\|_{H^{-1}(\Omega_x)}
\end{equation}
Integrating in $t$ and using the $k=0$ bound yields
\begin{equation}
\|\partial_{t_i}u_{h_x}\|_{L^2(\Omega_t;H_0^1(\Omega_x))}
\lesssim \|f\|_{W'}+\|\partial_{t_i}f\|_{W'}
\end{equation}

For higher-order derivatives, repeated differentiation, using that $\mu$ is affine in $t$, gives, for all $v\in V_{h_x}$,
\begin{equation}
a_t(\partial_t^\alpha u_{h_x},v)
=\langle F_{\alpha,h_x},v\rangle_{H^{-1}(\Omega_x)\times H_0^1(\Omega_x)}
\end{equation}
with
\begin{equation}
F_{\alpha,h_x}
=\partial_t^\alpha f
-\sum_{i=1}^{\Nt}\alpha_i\,\mu_i\,\partial_t^{\alpha-e_i}u_{h_x}
\end{equation}
Testing with $v=\partial_t^\alpha u_{h_x}(\cdot,t)$ and using coercivity gives
\begin{equation}
\|\partial_t^\alpha u_{h_x}(\cdot,t)\|_{H_0^1(\Omega_x)}
\lesssim
\|\partial_t^\alpha f(\cdot,t)\|_{H^{-1}(\Omega_x)}
+\sum_{i=1}^{\Nt}\alpha_i\,\|\mu_i\|_{L^\infty(\Omega_x)}\,
\|\partial_t^{\alpha-e_i}u_{h_x}(\cdot,t)\|_{H_0^1(\Omega_x)}
\end{equation}
Taking $L^2(\Omega_t)$-norms and arguing inductively on $|\alpha|$ yields
\begin{equation}
\|\partial_t^\alpha u_{h_x}\|_{L^2(\Omega_t;H_0^1(\Omega_x))}
\le C_{|\alpha|}\sum_{|\beta|\le|\alpha|}\|\partial_t^\beta f\|_{W'}
\end{equation}
Because $f$ is affine, the terms with $|\beta|\ge 2$ vanish, and summing over $|\alpha|\le k$ gives \eqref{eq:Hk-regularity-est-fem}.

Thus $\partial_t^\alpha u_{h_x}\in L^2(\Omega_t;V_{h_x})$ for all $|\alpha|\le k$, and hence
\begin{equation}
u_{h_x}\in H^k(\Omega_t;V_{h_x})
\end{equation}
\end{proof}

\begin{lem}[Parameter Regularity of Finite Element Coefficients]
\label{lem:coef-Hk}
Let \(u_{h_x}(x,t)=\sum_{k=1}^{K}u_k(t)\,\varphi_k(x)\) be the finite element solution to \eqref{eq:fem} on the mesh $\mathcal{T}_{h_x}$.
Assume $\mathcal{T}_{h_x}$ is shape-regular and quasi-uniform, so that the coefficient extraction map
\(E:V_{h_x}\to\IR^K\), $E\bigl(\sum_{k=1}^{K}a_k\varphi_k\bigr)=(a_1,\dots,a_K)^\top$, satisfies the stability estimate (mass-matrix stability on quasi-uniform meshes)
\begin{equation}
\|E(v_{h_x})\|_{\ell^2}\ \lesssim\ h_x^{-d/2}\,\|v_{h_x}\|_{L^2(\Omega_x)}\ \lesssim\ h_x^{-d/2}\,\|v_{h_x}\|_{H_0^1(\Omega_x)}\qquad \forall v_{h_x}\in V_{h_x}
\label{eq:coef-extraction-stability}
\end{equation}
where the second inequality follows from Poincar\'e's inequality.
Then, for every integer $k\ge 1$, the coefficient vector $\mathbf{u}(t):=(u_1(t),\dots,u_K(t))^\top$ belongs to $H^k(\Omega_t;\ell^2)$, and
    \begin{equation}
\sum_{|\alpha|\le k}\|\partial_t^{\alpha}\mathbf{u}\|_{L^2(\Omega_t;\ell^2)}
\le C_k\,h_x^{-d/2}\,\Bigl(\|f\|_{W'}+\sum_{i=1}^{\Nt}\|\partial_{t_i}f\|_{W'}\Bigr)
\label{eq:coef-Hk-est}
\end{equation}
where $C_k$ is independent of $t$ and $h_x$ (up to the explicit factor $h_x^{-d/2}$).
In particular, the scalar coefficients $u_1(t), \hdots, u_K(t)\in H^k(\Omega_t)$.
\end{lem}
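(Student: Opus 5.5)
The plan is to reduce the statement to Theorem~\ref{thm:Hk-regularity-fem} via the linearity and stability of the coefficient extraction map $E$. First I would note that $E:V_{h_x}\to\IR^K$ is linear and independent of $t$. It therefore commutes with parameter derivatives: $\mathbf{u}(t)=E(u_{h_x}(\cdot,t))$ and $\partial_t^\alpha\mathbf{u}(t)=E(\partial_t^\alpha u_{h_x}(\cdot,t))$.

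To justify this identity without any weak-derivative subtleties, I would use Step~1 of the proof of Theorem~\ref{thm:Hk-regularity-fem}. There $\mathbf{u}(t)=\mathbf{A}(t)^{-1}\mathbf{F}(t)$ is shown to be $C^\infty$ on $\Omega_t$, and $u_{h_x}(\cdot,t)=\sum_k u_k(t)\varphi_k$ with $t$-independent basis functions. Hence $\partial_t^\alpha u_{h_x}=\sum_k \partial_t^\alpha u_k(t)\varphi_k$ classically, which is exactly $E(\partial_t^\alpha u_{h_x})=\partial_t^\alpha\mathbf{u}$. Since $V_{h_x}$ is finite-dimensional, weak and classical derivatives agree here.

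Next I would apply \eqref{eq:coef-extraction-stability} pointwise in $t$:
\[
\|\partial_t^\alpha\mathbf{u}(t)\|_{\ell^2}\lesssim h_x^{-d/2}\|\partial_t^\alpha u_{h_x}(\cdot,t)\|_{H_0^1(\Omega_x)} .
\]
The hidden constant depends only on shape regularity and quasi-uniformity, so it is independent of $t$ and $h_x$. Squaring, integrating over $\Omega_t$, and taking square roots gives $\|\partial_t^\alpha\mathbf{u}\|_{L^2(\Omega_t;\ell^2)}\lesssim h_x^{-d/2}\|\partial_t^\alpha u_{h_x}\|_{L^2(\Omega_t;H_0^1(\Omega_x))}$. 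Summing over $|\alpha|\le k$ and invoking \eqref{eq:Hk-regularity-est-fem} yields \eqref{eq:coef-Hk-est}, with $C_k$ equal to the theorem's constant times the extraction constant. Both are independent of $t$ and $h_x$.

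The final claim, that each $u_j(t)\in H^k(\Omega_t)$, follows because $|\partial_t^\alpha u_j(t)|\le\|\partial_t^\alpha\mathbf{u}(t)\|_{\ell^2}$, and $\Omega_t$ is bounded so $C^\infty$ on the compact cube already implies $L^2$. I do not expect a real obstacle. The only point needing care is the commutation of $E$ with $\partial_t^\alpha$ in the Bochner sense, and the smoothness of $\mathbf{u}$ established in Theorem~\ref{thm:Hk-regularity-fem} resolves it directly. If one preferred, the inverse mass-matrix eigenvalue bound $\lambda_{\min}(\mathbf{M})\gtrsim h_x^{d}$ could be cited to justify \eqref{eq:coef-extraction-stability}, but that inequality is part of the hypothesis.
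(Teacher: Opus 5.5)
Your proposal is correct and matches the paper's proof step for step. The paper likewise commutes $E$ with $\partial_t^\alpha$, applies the extraction stability estimate pointwise in $t$, integrates over $\Omega_t$, and invokes Theorem~\ref{thm:Hk-regularity-fem}; your appeal to the classical smoothness of $\mathbf{u}(t)=\mathbf{A}(t)^{-1}\mathbf{F}(t)$ is just a more explicit justification of the commutation, which the paper attributes only to linearity of $E$ and finite-dimensionality of $V_{h_x}$.
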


\begin{proof}
Since $V_{h_x}$ is finite-dimensional and $E$ is linear, differentiation in $t$ commutes with $E$ in the weak sense:
\(\partial_t^{\alpha}\mathbf{u}(t)=E\bigl(\partial_t^{\alpha}u_{h_x}(\cdot,t)\bigr)\) for all multi-indices $\alpha$.
Therefore, by \eqref{eq:coef-extraction-stability},
\begin{align}
\|\partial_t^{\alpha}\mathbf{u}(t)\|_{\ell^2}\le C\,h_x^{-d/2}\,\|\partial_t^{\alpha}u_{h_x}(\cdot,t)\|_{H_0^1(\Omega_x)}\qquad\text{for a.e. }t\in\Omega_t
\end{align}
Taking $L^2(\Omega_t)$-norms and summing over $|\alpha|\le k$, the estimate \eqref{eq:coef-Hk-est} follows from \eqref{eq:Hk-regularity-est-fem}.
Finally, membership $\mathbf{u}\in H^k(\Omega_t;\ell^2)$ implies that the scalar coefficients $u_1(t), u_2(t), \hdots,\allowbreak u_K(t)\in H^k(\Omega_t)$.
\end{proof}

\subsection{Interpolation in the Parameter Domain}
\label{subsec:param-interp}

We propose an interpolation-based approach to efficiently approximate \(u(x,t)\) for any \(t\in\Omega_t\). Choose interpolation points \(\{t_j^*\}_{j=1}^{J}\subset\Omega_t\) and solve the PDE at each point, producing finite element solutions \(\{u_{h_x,j}^*(x)\}_{j=1}^{J}\subset V_{h_x}\). With the basis \(\{\varphi_k\}_{k=1}^{K}\), each solution takes the form
\begin{equation}
u_{h_x,j}^*(x) = \sum_{k=1}^{K} u_{j,k}\,\varphi_k(x)
\label{eq:interp-solutions}
\end{equation}
where \(u_{j,k}\) are the scalar coefficients (and \(\mathbf{u}_j=(u_{j,1},\dots,u_{j,K})^{\top}\) denotes the corresponding coefficient vector). To approximate the solution at an arbitrary \(t\in\Omega_t\), we interpolate these precomputed solutions in parameter space.

\paragraph{Low-Dimensional Parameter Spaces.}
For \(\Nt\le 4\), classical interpolation is effective, and two standard options are:
\begin{itemize}
\item \emph{Tensor-product interpolation}: When the interpolation points \(\{t_j^*\}\) form a structured tensor grid, we can use tensor-product basis functions for the interpolation. This option readily facilitates higher regularity approximation spaces for the interpolation, but is best suited for box-shaped parameter domains.

\item \emph{Simplicial interpolation}: When \(\Omega_t\) is partitioned into simplices with vertices at the interpolation points \(\{t_j^*\}\), it is natural to use piecewise-linear interpolation. This option also gives flexibility in the geometry of the parameter domain, for example allowing non-rectangular domains, constraints on admissible parameter values, or exclusion of regions of no interest (e.g., ``holes'' in \(\Omega_t\)). It is also advantageous when refinement is needed only in localized regions of the parameter space.
\end{itemize}

A simplicial discretization of $\Omega_t$ is illustrated in Figure~\ref{fig:param-mesh}. Let \(V_{h_t}\) denote the corresponding approximation space, with discretization parameter $h_t$, over the parameter domain with basis \(\{\phi_j(t)\}_{j=1}^{J}\). We approximate each coefficient function \(u_k(t)\) using interpolation:
\begin{equation}
\widehat{u}_k(t) = \sum_{j=1}^{J} c_{j,k}\phi_j(t)
\label{eq:interp-lowdim-condition}
\end{equation}
with scalar coefficients \(c_{j,k}\) determined by the interpolation conditions
\begin{equation}
\widehat{u}_k(t_j^*) = u_{j,k}, \quad j=1,\dots,J
\end{equation}

\begin{figure}
\centering
\includegraphics[width=0.35\linewidth]{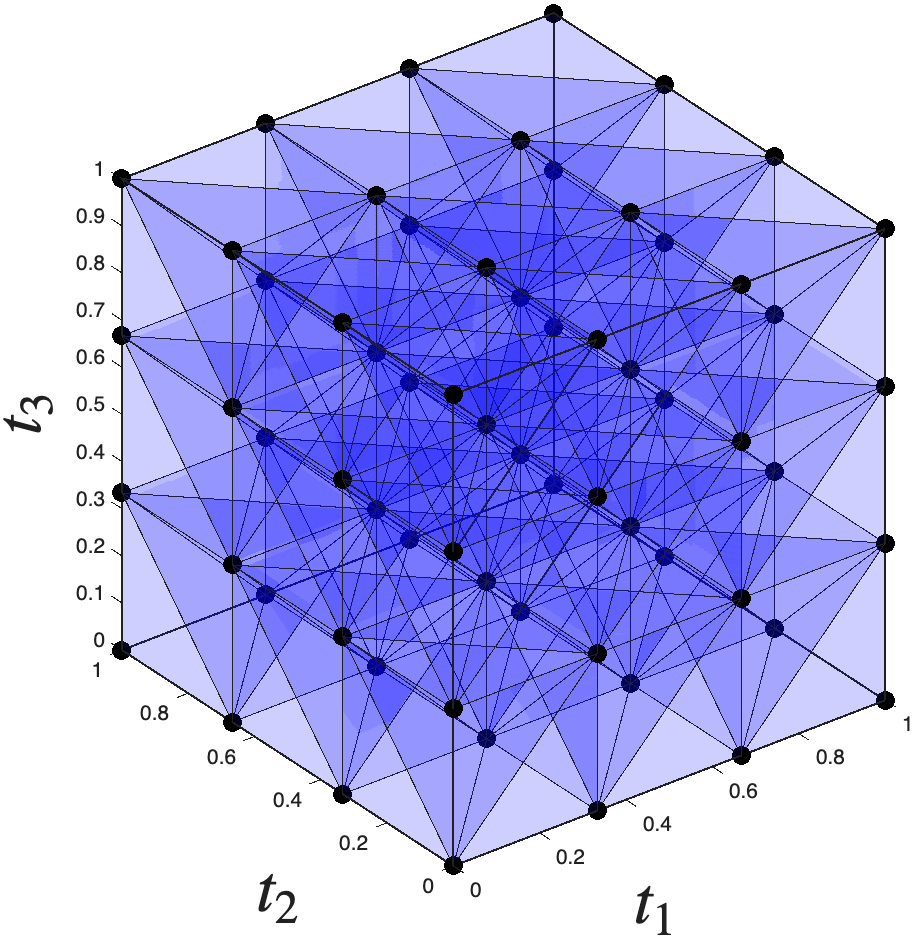}
\caption{\textit{Low-dimensional interpolation in $\Omega_t$.} For $\Nt\leq 4$, the parameter domain can be partitioned into simplices with interpolation points at the vertices. This partition constitutes a mesh $\mathcal{T}_{h_t}$ of the parameter domain with mesh size $h_t$ on which we may employ standard piecewise-linear interpolation. }
\label{fig:param-mesh}
\end{figure}

\paragraph{High-Dimensional Parameter Spaces with Extreme Learning Machines.}
For \(\Nt>4\), classical interpolation becomes infeasible. We approximate each coefficient function \(u_k(t)\) by an Extreme Learning Machine (ELM) ansatz:
\begin{equation}
\widehat{u}_k(t) = \sum_{m=1}^{M} w_{k,m}\,\psi_m(t)
\label{eq:elm-highdim}
\end{equation}
with feature functions
\begin{equation}
\psi_m(t) = \sigma(b_m\cdot t + c_m)
\label{eq:elm-basis}
\end{equation}
where \(\sigma\) is a fixed activation (e.g., sigmoid or ReLU) and the parameters \((b_m,c_m)\in S^{\Nt-1}\times \IR\) are drawn once at random and then held fixed. It's also common to add an outer bias term in the ansatz \eqref{eq:elm-highdim} so that $\widehat{u}_k(t) =v_k + \sum_{m=1}^{M} w_{k,m}\,\psi_m(t)$, $v_k\in \IR$. Denoting the vector of coefficient functions $\widehat{\mathbf{u}}(t)=(\widehat{u}_1(t),\hdots,\widehat{u}_{K}(t))^\top$, we can express
\begin{align}
\widehat{\mathbf{u}}(t)=W\sigma(Bt+c)
\label{eq:elm-matix-form}
\end{align}
where $W\in \IR^{K\times M}$ has elements $[W]_{k,m}=w_{k,m}$, $B\in \IR^{M\times \Nt}$ has the unit vectors $b_m$ as rows and $c\in \IR^M$ is the vector with $c_m$ as elements. In \eqref{eq:elm-matix-form} it is understood that $\sigma$ is applied elementwise. Enforcing interpolation at \(\{t_j^*\}_{j=1}^{J}\) leads to the linear systems
\begin{equation}
\Psi\,w_k = u_{\cdot,k}, \quad k=1,\dots,K
\label{eq:elm-systems}
\end{equation}
where \(\Psi\in\mathbb{R}^{J\times M}\) has entries
\begin{equation}
\Psi_{j,m} = \psi_m(t_j^*) \quad \text{for } j=1,\dots,J,\ m=1,\dots,M
\label{eq:elm-matrix}
\end{equation}
and \(u_{\cdot,k}=(u_{1,k},\dots,u_{J,k})^{\top}\). To get smooth solutions $\widehat{u}_k(t)$, we generally want $M>J$ (more feature functions than interpolation points). In this overparametrized regime the system \eqref{eq:elm-systems} is generally underdetermined (assuming appropriate sampling of $(b_m,c_m)$). We choose the minimal norm solution by solving, for each $k\in \{1,\hdots,K\}$, the constrained problem
\begin{equation}
\label{eq:constrained-problem}
w_k = \argmin_{w \in \IR^M}
\left\{ \frac{1}{2}\,\lVert w \rVert^2 \;\middle|\; \Psi w = u_{\cdot,k} \right\}
\end{equation}

This hybrid FEM–ELM strategy is computationally efficient and scalable for high-dimensional \(\Omega_t\), i.e., for large $\Nt$. In the numerical experiments in Section~\ref{sec:numerical-experiments} we will use the ReLU activation. Then, each feature function in \eqref{eq:elm-basis} can be associated to a hyperplane intersecting $\Omega_t$, as illustrated in Figure~\ref{fig:param-elm}. These hyperplanes define the boundaries separating the regions where the ReLU units are active (linear) and inactive (zero).

\begin{rem}[Regularized Regression vs.\ Interpolation]
\label{rem:elm-regularization}
Instead of enforcing exact interpolation in \eqref{eq:elm-systems}, one may determine the coefficients $w_k$ by solving a regularized least-squares problem, for example of ridge (Tikhonov) type,
\begin{equation}
w_k^\lambda = \argmin_{w\in\mathbb{R}^M}
\left\{
\frac{1}{2}\|\Psi w - u_{\cdot,k}\|_2^2
+ \frac{\lambda}{2}\|w\|_2^2
\right\}, \qquad \lambda>0.
\end{equation}
Such regularized regression can improve numerical stability and robustness, particularly when the feature matrix $\Psi$ is ill-conditioned.

In this work, we focus on interpolatory surrogates and minimum-norm solutions, which fit naturally with the interpolation-based construction. A regularized formulation is also possible, but would introduce an additional bias term in the approximation error depending on $\lambda$.
\end{rem}

\begin{rem}[Choice of Interpolation Points in Higher Dimensions]
\label{rem:points-high-d}
In high-dimensional settings, structured grids are impractical. Quasi-random or randomized designs, such as Sobol' sequences or Latin hypercube sampling, yield well-distributed interpolation points, improving accuracy and numerical stability for moderate \(J\).
\end{rem}

\begin{rem}[Suitability of ELM Interpolation]
\label{rem:elm-smoothness}
The randomized ELM features in \eqref{eq:elm-basis} are global in \(t\), so the target map \(t\mapsto u(\cdot,t)\) must be globally smooth on \(\Omega_t\) for interpolation to be accurate. Under Assumption~\ref{ass:assumptions}, Theorem~\ref{thm:Ck-continuity} guarantees the needed \(t\)-smoothness. However, if those assumptions are violated, for instance, allowing \(\mu_{\min}<0\) so that the operator becomes indefinite on a subset of \(\Omega_t\), the solution can lose global regularity in \(t\). In that case, even a small irregular region contaminates a global ELM interpolant through its global feature functions, degrading accuracy elsewhere in the parameter domain.
\end{rem}

\begin{rem}[Computational Complexity and Practical Recommendations]
\label{rem:complexity}
The computational cost naturally separates into an offline and an online stage.

In the offline stage, the dominant cost lies in solving the PDE at the interpolation points $\{t_j^*\}_{j=1}^J$. This requires $J$ independent PDE solves, and therefore scales linearly with $J$. The subsequent ELM fitting step reduces to a linear least-squares problem with feature matrix $\Psi$, whose cost is typically negligible compared to the PDE solves.

In the online stage, the evaluation of the surrogate is inexpensive: it amounts to evaluating $M$ nonlinear features and forming a linear combination, and therefore scales linearly with $M$. In particular, once the surrogate has been constructed, repeated evaluations of the parameter-to-solution map can be performed at significantly reduced cost.

As a heuristic guideline (based on numerical experiments and common practice in
high-dimensional surrogate modeling \cite{santner2019design}), one may choose $J$ proportional to $N_t$
(e.g., $J \approx 10N_t$--$100N_t$). The optimal choice depends on the
regularity and complexity of the parameter-to-solution map, and may require
problem-specific tuning.
\end{rem}

\begin{rem}[Alternatives to ELM Interpolation]
\label{rem:elm-alternatives}
The interpolating ELM approach adopted here should be viewed as a simple and analytically tractable surrogate modeling strategy rather than a state-of-the-art machine learning method. Its main advantages are its conceptual simplicity, the explicit linear structure in the output weights, and the fact that it enables a rigorous approximation theory with dimension-independent rates under Barron-type assumptions. In addition, the resulting surrogate is explicitly differentiable with respect to the parameters, which is advantageous in inverse problems and optimization settings.

At the same time, the approach has several limitations. The use of random, non-adaptive features may lead to suboptimal performance compared to modern machine learning methods, and the interpolation points $\{t_j^*\}$ are typically chosen a priori rather than in a data-driven or adaptive manner. Consequently, the number of required samples may be larger than in methods that exploit structure in the parameter-to-solution map more efficiently.

More advanced approaches include projection-based reduced-order models 
\cite{HRS16}, operator-learning methods such as neural operators 
\cite{li2020fno,lu2021deeponet}, and adaptive or active learning strategies 
(including Bayesian approaches) that iteratively select informative parameter samples \cite{santner2019design}. These methods can offer improved empirical performance and reduced sample complexity, but typically come at the cost of increased algorithmic complexity and a more limited theoretical understanding, particularly in high-dimensional parameter spaces.

The present work focuses on the ELM-based construction as a baseline method that allows for a clear mathematical analysis and provides insight into the approximation properties of interpolation-based surrogates in high-dimensional parameter domains.
\end{rem}

\begin{figure}
\centering
\begin{subfigure}[ht]{0.45\linewidth}
\centering
\includegraphics[width=0.8\textwidth]{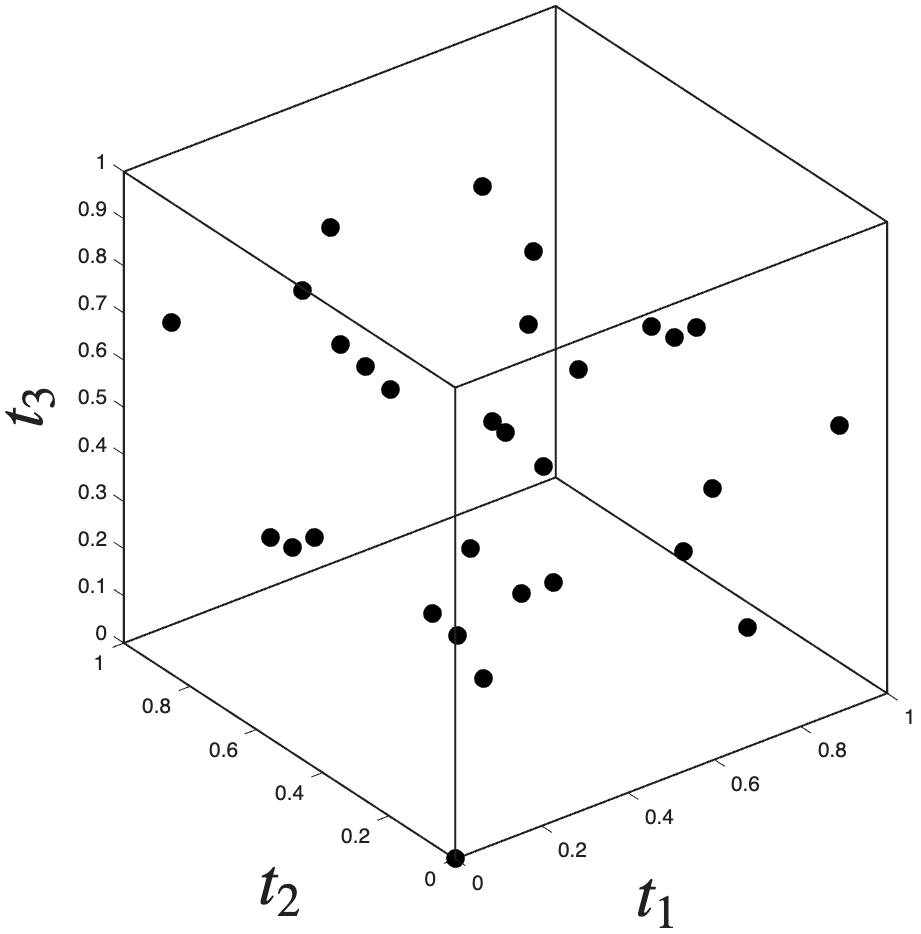}
\caption{}
\end{subfigure}
\quad
\begin{subfigure}[ht]{0.45\linewidth}
\centering
\includegraphics[width=0.8\textwidth]{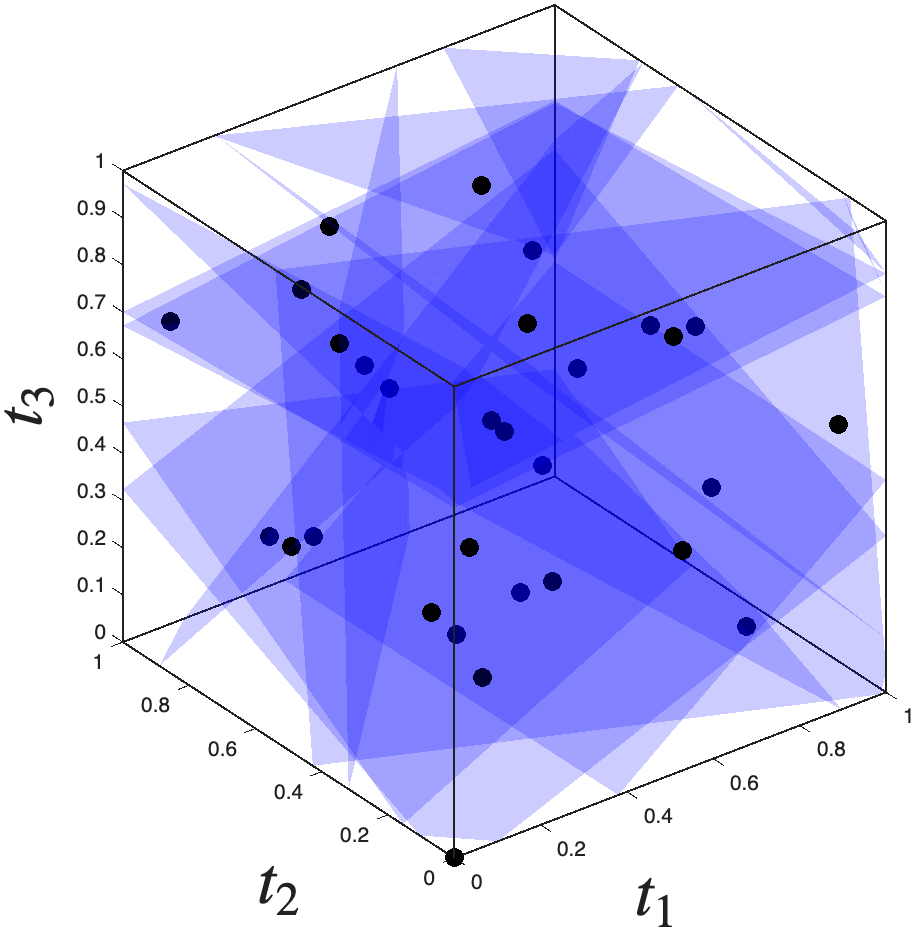}
\caption{}
\end{subfigure}
\caption{\textit{High-dimensional interpolation in \(\Omega_t\).} 
For \(\Nt>4\), an ELM surrogate is used for interpolation.
\textbf{(a)} $J$ quasi-random (Sobol') points in \(\Omega_t\) are used as interpolation points. \textbf{(b)} Random ReLU feature hyperplanes for the ELM surrogate in \(\Omega_t\); each hyperplane marks where one unit switches between active and inactive regions.}
\label{fig:param-elm}
\end{figure}

\subsection{Error Estimates for the Interpolation-Based Approximation}
\label{subsec:interp-errors}

\paragraph{Low-Dimensional Case.}
Let \(\mathcal{T}_{h_t}\) be a partition of \(\Omega_t\) with mesh size \(h_t\), 
and let \(V_{h_t}\) denote the Lagrange finite element space of degree \(q\) on \(\mathcal{T}_{h_t}\) with 
nodal points \(\{t_j^*\}_{j=1}^J\). We take \(\pi_{h_t}: C^0(\Omega_t;H_0^1(\Omega_x))\to V_{h_t}\otimes H_0^1(\Omega_x)\) 
to be the nodal Lagrange interpolation operator in the parameter variable, defined by
\begin{equation}
(\pi_{h_t} v)(t_j^*) = v(t_j^*) \quad \text{for } j=1,\dots,J
\label{eq:piN-definition}
\end{equation}
By Theorem \ref{thm:Ck-continuity}, the map \(t\mapsto u(\cdot,t)\) is at least continuous, so point values in \(t\) are well defined and \eqref{eq:piN-definition} is meaningful.
We can then express the discrete parameter dependent solution as
\begin{equation}
\uh = \pi_{h_t} u_{h_x}
\label{eq:interpolated-solution}
\end{equation}
\begin{thm}[Error Estimates: Low Dimensional Case]
    \label{thm:error-estimates-low-dim}
We have the error estimates
\begin{align}
    \label{eq:error-estimate L2}
    \|u - \uh\|_{\mathcal{O}} \lesssim h_x^{s} \|u\|_{L^2(\Omega_t;H^{s}(\Omega_x))} + h_t^{q+1}\Bigl(\|f\|_{W'}+\sum_{i=1}^{\Nt}\|\partial_{t_i}f\|_{W'}\Bigr)
\end{align}
and
\begin{align}
    \label{eq:error-estimate Linfty}
    \|u - \uh\|_{L^{\infty}(\Omega_t;L^2(\Omega_x))} \lesssim h_x^{s} \|u\|_{L^{\infty}(\Omega_t;H^{s}(\Omega_x))} + h_t^{q+1} |u|_{W^{q+1,\infty}(\Omega_t;L^2(\Omega_x))}
\end{align}
where $q$ is the degree of the parameter FEM space $V_{h_t}$, $s=\min(p+1,m)$, $p$ is the degree of the spatial FEM space $V_{h_x}$ and $m$ is the regularity of the weak solution $u(\cdot,t)\in H^m(\Omega_x)$, i.e., $m=1$ for the assumption $f\in W'$ and $m=2$ if we have the stronger assumption $f\in L^2(\Omega_x)$.
\end{thm}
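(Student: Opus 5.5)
The plan is to split the error with the triangle inequality, inserting the spatial Galerkin solution $u_{h_x}$:
\begin{equation*}
u-\uh=(u-u_{h_x})+(u_{h_x}-\pi_{h_t}u_{h_x}).
\end{equation*}
The first term is a pure spatial finite element error and the second a pure parameter interpolation error. Since $\pi_{h_t}$ acts only in $t$ and $u_{h_x}(\cdot,t)\in V_{h_x}$ for every $t$, the second term stays in $V_{h_x}$ pointwise in $t$.

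\textbf{Spatial term.} For each fixed $t$, C\'ea's lemma applied to \eqref{eq:fem} gives $\|u(\cdot,t)-u_{h_x}(\cdot,t)\|_{H_0^1(\Omega_x)}\lesssim \inf_{v\in V_{h_x}}\|u(\cdot,t)-v\|_{H_0^1(\Omega_x)}$. The coercivity and continuity constants of $a_t$ are uniform in $t$ by \eqref{eq:mu-bounds}. For the $L^2(\Omega_x)$ error I would use an Aubin--Nitsche duality argument with the adjoint problem $A(t)z=e$. This problem is symmetric, and the $m=2$ shift holds uniformly in $t$ by Remark~\ref{rem:elliptic-shift}. Combined with standard interpolation estimates in $V_{h_x}$, this gives
\begin{equation*}
\|u(\cdot,t)-u_{h_x}(\cdot,t)\|_{\Omega_x}\lesssim h_x^{s}\|u(\cdot,t)\|_{H^s(\Omega_x)}, \qquad s=\min(p+1,m),
\end{equation*}
with a $t$-independent constant. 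In the case $m=1$ one simply uses the $H^1$ bound together with Poincar\'e; the rate $h_x^1$ then needs the duality step, which is available on convex or $C^{1,1}$ domains. Squaring and integrating over $\Omega_t$ yields the first term of \eqref{eq:error-estimate L2}. Taking the supremum over $t$ instead yields the first term of \eqref{eq:error-estimate Linfty}.

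\textbf{Parameter term.} I would apply standard Bramble--Hilbert Lagrange interpolation estimates in $t$ to the Hilbert-space-valued function $t\mapsto u_{h_x}(\cdot,t)$. For the $L^\infty$ estimate, $W^{q+1,\infty}$ regularity gives
\begin{equation*}
\|v-\pi_{h_t}v\|_{L^\infty(\Omega_t;L^2(\Omega_x))}\lesssim h_t^{q+1}|v|_{W^{q+1,\infty}(\Omega_t;L^2(\Omega_x))}.
\end{equation*}
For the $L^2$ estimate I use $\|v-\pi_{h_t}v\|_{L^2(\Omega_t;X)}\lesssim h_t^{q+1}|v|_{H^{q+1}(\Omega_t;X)}$, which is valid because $H^{q+1}(\Omega_t)\hookrightarrow C^0$ once $q+1>\Nt/2$. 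This is harmless here: $u_{h_x}$ is $C^\infty$ in $t$ by Theorem~\ref{thm:Hk-regularity-fem}, step 1, so the nodal values are classical. I then bound $|u_{h_x}|_{H^{q+1}(\Omega_t;H_0^1)}$ by \eqref{eq:Hk-regularity-est-fem} with $k=q+1$, which is mesh-independent in $h_x$. This gives the term $h_t^{q+1}(\|f\|_{W'}+\sum_i\|\partial_{t_i}f\|_{W'})$.

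\textbf{Mismatch in \eqref{eq:error-estimate Linfty}.} The parameter term there carries $|u|$ rather than $|u_{h_x}|$. I would handle this by interpolating $u$ first, writing
\begin{equation*}
u_{h_x}-\pi_{h_t}u_{h_x}=(u-\pi_{h_t}u)-(I-\pi_{h_t})(u-u_{h_x}).
\end{equation*}
The $L^\infty$-stability of Lagrange interpolation, $\|\pi_{h_t}w\|_{L^\infty(\Omega_t;X)}\lesssim\|w\|_{L^\infty(\Omega_t;X)}$, then absorbs the second piece into the spatial error term already bounded by $h_x^s\|u\|_{L^\infty(\Omega_t;H^s)}$.

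\textbf{Main obstacle.} The delicate point is the interpolation step. It needs the vector-valued Bramble--Hilbert estimate with constants independent of $h_x$. It also needs enough $t$-regularity: the $L^2$-estimate needs $H^{q+1}$ in $t$, supplied by Theorem~\ref{thm:Hk-regularity-fem}, and the $L^\infty$-estimate needs $W^{q+1,\infty}$, supplied by Theorem~\ref{thm:Ck-continuity}, whose derivatives are Lipschitz and hence bounded on the compact set $\Omega_t$. I would first verify the scalar estimate and then lift it to the Bochner setting, either via an orthonormal basis of $V_{h_x}$ or directly through Taylor expansion with $X$-valued integral remainders.
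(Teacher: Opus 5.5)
Your proposal is correct and follows essentially the same route as the paper: the splitting $u-\uh=(u-u_{h_x})+(u_{h_x}-\pi_{h_t}u_{h_x})$ with Theorem~\ref{thm:Hk-regularity-fem} gives the $L^2$ bound. For the max-norm bound, your rewriting collapses exactly to the paper's decomposition $u-\uh=(u-\pi_{h_t}u)+\pi_{h_t}(u-u_{h_x})$ together with $L^\infty$-stability of $\pi_{h_t}$.

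One small caveat concerns the borderline case $\Nt=4$, $q=1$. There, smoothness of $u_{h_x}$ in $t$ only makes the nodal values well defined; it does not rescue a bound by $|u_{h_x}|_{H^{q+1}(\Omega_t;L^2(\Omega_x))}$ alone. Since Theorem~\ref{thm:Hk-regularity-fem} controls every $H^k(\Omega_t;H_0^1(\Omega_x))$ norm by the same data, the estimate $\|v-\pi_{h_t}v\|_{\mathcal{O}}\lesssim\sum_{j=q+1}^{k}h_t^{j}|v|_{H^{j}(\Omega_t;L^2(\Omega_x))}$ with $k>\Nt/2$ still gives the $h_t^{q+1}$ rate. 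The paper also passes over this point.
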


\begin{proof}
Recall that $\uh=\pi_{h_t}u_{h_x}$. Note that for a.e.\ $t\in\Omega_t$, standard finite element error estimates yield
\begin{align}
    \label{eq:FEM-L2-bound}
\|u(\cdot,t)-u_{h_x}(\cdot,t)\|_{L^2(\Omega_x)}
\lesssim h_x^{s}\,\|u(\cdot,t)\|_{H^{s}(\Omega_x)},
\qquad s=\min(p+1,m)        
\end{align}
We next proceed slightly differently for the $L^2$- and $L^\infty$-estimates, where the $L^2$-estimate avoids the use of $L^2(\Omega_t)$-stability of the nodal Lagrange  interpolation operator and relies only on standard Sobolev approximation estimates in the parameter variable.

\proofparagraph{$L^2$-Estimate (\ref{eq:error-estimate L2}).}
We decompose the error as
\begin{equation}
u-\uh = (u-u_{h_x}) + (u_{h_x}-\pi_{h_t}u_{h_x})
\label{eq:lowdim-error-decomp}
\end{equation}
Squaring \eqref{eq:FEM-L2-bound}, integrating in $t$, and taking square roots gives
\begin{equation}
\|u-u_{h_x}\|_{\mathcal O}
\lesssim h_x^{s}\,\|u\|_{L^2(\Omega_t;H^{s}(\Omega_x))}
\label{eq:lowdim-spatial-L2}
\end{equation}
For the parameter interpolation term, we utilize the standard interpolation estimate for nodal Lagrange interpolation of degree $q$, which in this context reads
\begin{align}
\|v-\pi_{h_t}v\|_{\mcO} &\lesssim h_t^{r}\,|v|_{H^{r}(\Omega_t;L^2(\Omega_x))},
\qquad 1\le r\le q+1
\label{eq:lowdim-int-L2}
\end{align}
with a constant depending only on $q$ and the shape-regularity of $\mathcal T_{h_t}$.
Consequently,
\begin{align}
\|u_{h_x}-\pi_{h_t}u_{h_x}\|_{\mathcal O}
&\lesssim h_t^{r}\,|u_{h_x}|_{H^{r}(\Omega_t;L^2(\Omega_x))}
\\
&\lesssim h_t^{r}\,|u_{h_x}|_{H^{r}(\Omega_t;H_0^1(\Omega_x))}
\\
&\lesssim h_t^{q+1}\Bigl(\|f\|_{W'}+\sum_{i=1}^{\Nt}\|\partial_{t_i}f\|_{W'}\Bigr)
\end{align}
where we in the last inequality chose $r=q+1$ and apply Theorem~\ref{thm:Hk-regularity-fem}.
This concludes the proof of \eqref{eq:error-estimate L2}.

\proofparagraph{Max Norm Estimate (\ref{eq:error-estimate Linfty}).}
We use the decomposition
\begin{equation}
u-\uh = (u-\pi_{h_t}u) + \pi_{h_t}(u-u_{h_x})
\label{eq:lowdim-error-decomp-B}
\end{equation} 
Standard interpolation estimates for nodal Lagrange
interpolation of degree $q$ yield
\begin{align}
\|u-\pi_{h_t}u\|_{L^\infty(\Omega_t;L^2(\Omega_x))}
&\lesssim h_t^{r}\,|u|_{W^{r,\infty}(\Omega_t;L^2(\Omega_x))} , \qquad 1\le r\le q+1
\label{eq:lowdim-B-int-Linfty}
\end{align}
and choosing $r=q+1$ gives the desired $h_t^{q+1}$ rate.
Utilizing the $L^\infty$ stability of $\pi_{h_t}$ and the finite element error estimate \eqref{eq:FEM-L2-bound} we have
\begin{align}
    \|\pi_{h_t}(u-u_{h_x})\|_{L^\infty(\Omega_t;L^2(\Omega_x))} \lesssim \|u-u_{h_x}\|_{L^\infty(\Omega_t;L^2(\Omega_x))}
    \lesssim h_x^{s}\,\|u\|_{L^\infty(\Omega_t;H^{s}(\Omega_x))}
\end{align}
This concludes the proof of \eqref{eq:error-estimate Linfty}.
\end{proof}

 \paragraph{High-Dimensional Case (ELM Approximation Property).}

For \(\Nt>4\), we approximate each coefficient function \(u_k(t)\) by an ELM as in \eqref{eq:elm-highdim}–\eqref{eq:elm-matrix}. 
As in the low-dimensional case, we call the full resulting approximation $\uh$. 
To connect the ELM approximation ansatz with quantitative, dimension-independent error rates, we briefly recall the Barron space framework for two-layer ReLU networks.

\begin{rem}[Barron Spaces and Dimension-Independent Rates]
A foundational result in neural network theory is the universal approximation theorem of Cybenko \cite{cybenko1989approximation}, which states that a two-layer (single hidden layer) neural network with a suitable nonlinear activation 
is capable of approximating any continuous function on a compact domain to arbitrary accuracy, provided a sufficiently large number of hidden units. However, this theorem does not provide any relation between the 
approximation error and the number of units \(M\).

The first significant advance in this direction was made by Barron \cite{barron2002universal}, who introduced a spectral condition on the target function which allows one to quantify not only 
whether a function can be approximated by a shallow sigmoidal neural network, but also how rapidly the error decays as \(M\) increases. Specifically, if the Fourier transform of a function satisfies a weighted integrability 
property, then the \(L^2\)-approximation error decreases at a dimension-independent rate of \(O(M^{-1/2})\). 

Inspired by Barron's original work, E and collaborators formalized the Barron space \(\mathcal{B}\) 
consisting of functions that can be approximated efficiently by two-layer neural networks~\cite{ma2022barron}.
\end{rem}
\begin{definition}[Barron Space]
    \label{def:Barron-space}
The Barron space \(\mathcal{B}(\Omega_t)\) consists of all functions \(\barronf:\Omega_t \to \mathbb{R}\) admitting a representation of the form
\begin{equation}
\barronf(t) = \int_{\mathbb{R}^{\Nt} \times \mathbb{R} \times \mathbb{R}} w\,\sigma(b \cdot t + c)\,\rho(db,\,dc,\,dw), \quad t\in\Omega_t
\end{equation}
where \(\sigma\) is the ReLU activation, \(\rho\) is a probability distribution on \(\mathbb{R}^{\Nt} \times \mathbb{R} \times \mathbb{R}\), and the Barron norm
\begin{equation}
\|\barronf\|_{\mathcal{B}(\Omega_t)} := \inf_{\rho} \int_{\mathbb{R}^{\Nt} \times \mathbb{R} \times \mathbb{R}} |w|\,(\|b\| + |c|)\,\rho(db,\,dc,\,dw)
\end{equation}
is finite. The infimum is taken over all such representations of \(\barronf\).
\end{definition}
For this function space, both direct and inverse approximation theorems hold \cite{ma2020towards}. In that sense, the Barron space is a suitable function class 
for studying approximation by two-layer neural networks. We restate Theorem 1 in \cite{ma2022barron} and Theorem 12 in \cite{ma2020towards} below.
\begin{thm}[Approximation in $L^2$-norm]
    \label{thm:Approximation in L^2-norm}
Let \(\barronf \in \mathcal{B}(\Omega_t)\), then there exists a two-layer neural network $\hat{\barronf}_M$ with $M$ hidden ReLU units such that
\begin{align}
    \|\barronf - \hat{\barronf}_M\|_{L^2(\Omega_t)} \lesssim \frac{\|\barronf\|_{\mathcal{B}(\Omega_t)}}{\sqrt{M}}
\end{align}
\end{thm}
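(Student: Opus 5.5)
The plan is the standard Maurey-type probabilistic argument (Monte Carlo sampling of the integral representation), as in Barron's original work and in E et al. Fix $\varepsilon>0$ and choose a representing probability measure $\rho$ with
\[
\int |w|(\|b\|+|c|)\,\rho(db,dc,dw)\le \|\barronf\|_{\mathcal{B}(\Omega_t)}+\varepsilon .
\]
Exploit the positive homogeneity of ReLU, $\sigma(\lambda s)=\lambda\sigma(s)$ for $\lambda>0$. This lets us normalize each atom to $(b,c)/(\|b\|+|c|)$ and move the factor $\|b\|+|c|$ into the outer weight. We then obtain a new probability measure $\tilde\rho$ (after reweighting by $|w|(\|b\|+|c|)$ and normalizing) under which
\[
\barronf(t)=Z\,\mathbb{E}_{\tilde\rho}\bigl[\,s\,\sigma(\tilde b\cdot t+\tilde c)\,\bigr],
\]
where $Z=\int|w|(\|b\|+|c|)\,d\rho$, $s=\operatorname{sign}(w)\in\{\pm1\}$, and $\|\tilde b\|+|\tilde c|=1$. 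Atoms with $w=0$ or $\|b\|+|c|=0$ contribute nothing and are discarded.

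With this representation the integrand is uniformly bounded on $\Omega_t=[0,1]^{\Nt}$. Indeed,
\[
|\sigma(\tilde b\cdot t+\tilde c)|\le \|\tilde b\|\,\|t\|_\infty\text{-type bound}+|\tilde c|\le \max(1,\sup_{t\in\Omega_t}\|t\|)\cdot 1 .
\]
Here care is needed: $\|t\|_2\le\sqrt{\Nt}$ on the unit cube. To keep the bound dimension-free, I would measure $b$ in the dual norm of the norm in which $\Omega_t$ has unit radius, i.e.\ use $\|b\|_1$ with $\|t\|_\infty\le1$. Alternatively, accept the constant $\sqrt{\Nt}$ hidden in $\lesssim$, as the paper's generic-constant convention allows.

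Next draw $M$ i.i.d.\ samples $(s_m,\tilde b_m,\tilde c_m)\sim\tilde\rho$ and set
\[
\hat\barronf_M(t)=\frac{Z}{M}\sum_{m=1}^M s_m\,\sigma(\tilde b_m\cdot t+\tilde c_m),
\]
a two-layer network with $M$ ReLU units. By Fubini and independence,
\[
\mathbb{E}\,\|\barronf-\hat\barronf_M\|_{L^2(\Omega_t)}^2=\frac{1}{M}\int_{\Omega_t}\mathrm{Var}\bigl(Z s\,\sigma(\tilde b\cdot t+\tilde c)\bigr)\,dt\le \frac{Z^2}{M}\,|\Omega_t|\,\sup|\sigma|^2\lesssim\frac{(\|\barronf\|_{\mathcal{B}}+\varepsilon)^2}{M},
\]
using $|\Omega_t|=1$. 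Since the expectation is bounded by this quantity, at least one realization attains it. Letting $\varepsilon\to0$ (or simply taking $\varepsilon=\|\barronf\|_{\mathcal{B}}$, which only changes the constant) and taking square roots gives the claim.

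The main obstacle is the normalization and measurability step. One has to check that the reweighted measure $\tilde\rho$ is well defined: $Z$ is finite by assumption and positive unless $\barronf\equiv0$, which is a trivial case. One must also verify that the change of variables preserves the representation. The other delicate point is the uniform bound on the integrand over $\Omega_t$, which is where the dimension dependence could sneak in; I would handle it by the choice of norm on $(b,c)$ described above. Everything else is a routine variance computation.
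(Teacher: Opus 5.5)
Your argument is correct and is essentially the proof behind this statement: the paper gives no proof of its own but restates Theorem~1 of \cite{ma2022barron}. That result is proved by exactly this Monte Carlo sampling of the integral representation, and your ReLU-homogeneity rescaling is the standard device for passing from the first-moment norm in Definition~\ref{def:Barron-space} to a bounded integrand. Your caveat about the norm on $b$ is also the right one: the constant is dimension-free when $b$ is measured in $\|\cdot\|_1$ against $\|t\|_\infty\le 1$ (as in that reference and in the $\|\omega\|_1$ of Proposition~\ref{prop:spectral-criterion}), whereas the Euclidean norm puts a factor $\sqrt{N_t}$ into the hidden constant.
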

\begin{thm}[Approximation in $L^\infty$-norm]
    \label{thm:Approximation inmax-norm}
    Let \(\barronf \in \mathcal{B}(\Omega_t)\), then there exists a two-layer neural network $\hat{\barronf}_M$ with $M$ hidden ReLU units such that
    \begin{align}
        \|\barronf - \hat{\barronf}_M\|_{L^\infty(\Omega_t)} \lesssim \frac{\sqrt{\Nt+1}}{\sqrt{M}}\,\|\barronf\|_{\mathcal{B}(\Omega_t)}
    \end{align}
\end{thm}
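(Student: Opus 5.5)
The proof is probabilistic, following Maurey's empirical method as in Theorem~\ref{thm:Approximation in L^2-norm}, with the $L^2$ variance bound replaced by a uniform concentration bound. First we normalize the representation. Since ReLU is positively homogeneous, any triple $(b,c,w)$ with $\|b\|+|c|>0$ can be rescaled to $(b,c,w)\mapsto (b/(\|b\|+|c|),\,c/(\|b\|+|c|),\,w(\|b\|+|c|))$ without changing $w\,\sigma(b\cdot t+c)$. Fix $\varepsilon>0$ and pick a representing measure $\rho$ with $\int |w|(\|b\|+|c|)\,d\rho\le (1+\varepsilon)\|\barronf\|_{\mathcal B(\Omega_t)}$. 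After rescaling and then pushing the weight into the measure (change of measure by the density $|w|/\int|w|\,d\rho$), we write
\begin{equation*}
\barronf(t)=\Lambda\,\mathbb{E}_{\pi}\bigl[s\,\sigma(b\cdot t+c)\bigr]
\end{equation*}
Here $\Lambda\le(1+\varepsilon)\|\barronf\|_{\mathcal B}$, $s\in\{\pm1\}$, and $\pi$ is a probability measure supported on $\{\|b\|+|c|=1\}$. For $t\in\Omega_t=[0,1]^{\Nt}$ we have $|b\cdot t+c|\le \|b\|\sqrt{\Nt}+|c|\le\sqrt{\Nt}$. So each summand $g_{\theta}(t):=s\,\sigma(b\cdot t+c)$ is bounded by $\sqrt{\Nt}$ and is $1$-Lipschitz in $t$.

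Next, draw $\theta_1,\dots,\theta_M$ i.i.d.\ from $\pi$ and set $\hat{\barronf}_M=\frac{\Lambda}{M}\sum_m g_{\theta_m}$, which is a network with $M$ ReLU units. We need to bound $\mathbb E\sup_{t\in\Omega_t}\bigl|\frac1M\sum_m (g_{\theta_m}(t)-\mathbb E g(t))\bigr|$. The plan is symmetrization followed by Rademacher complexity. After introducing Rademacher signs $\xi_m$, the supremum over $t$ of $\frac1M\sum_m \xi_m s_m\sigma(b_m\cdot t+c_m)$ is handled by the Ledoux–Talagrand contraction lemma. Because $\sigma$ is $1$-Lipschitz, this reduces it to bounding $\mathbb E\sup_t\frac1M\sum_m\xi_m'(b_m\cdot t+c_m)$. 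That last quantity is at most $\frac1M\mathbb E\bigl\|\sum_m\xi_m' (b_m,c_m)\bigr\|\,\sup_t\|(t,1)\|\le \frac{\sqrt{\Nt+1}}{\sqrt M}$ by Cauchy–Schwarz, using $\|(b_m,c_m)\|\le1$ and $\|(t,1)\|\le\sqrt{\Nt+1}$. This gives
\begin{equation*}
\mathbb E\,\|\barronf-\hat{\barronf}_M\|_{L^\infty(\Omega_t)}\lesssim \Lambda\frac{\sqrt{\Nt+1}}{\sqrt M}
\end{equation*}
Some realization attains at most the expectation, and letting $\varepsilon\to0$ (or absorbing $1+\varepsilon\le 2$) gives the claim.

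The main obstacle is the uniform-in-$t$ control: a naive pointwise variance bound only gives $L^2$ rates. One could alternatively use a covering-number argument with a net of $\Omega_t$ plus Hoeffding and the Lipschitz property, but that introduces $\sqrt{\Nt\log M}$ factors. The contraction step must be applied carefully, since the signs $s_m$ must be absorbed into the Rademacher variables and the contraction applies to the map $z\mapsto\sigma(z)$ with $\sigma(0)=0$. A second, minor point is representations with $\|b\|+|c|=0$. They contribute constant terms $w\sigma(c)$ with $c=0$, hence zero, so they can be discarded.
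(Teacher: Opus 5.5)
Your proof is correct. Normalizing to $\|b\|+|c|=1$ via positive homogeneity and changing measure gives $\barronf=\Lambda\,\mathbb{E}_\pi[s\,\sigma(b\cdot t+c)]$. Symmetrization, the absolute-value form of the Ledoux--Talagrand contraction (where $\sigma(0)=0$ is needed) and the Cauchy--Schwarz bound $\frac1M\mathbb{E}\|\sum_m\xi_m(b_m,c_m)\|\,\sup_{t}\|(t,1)\|\le\sqrt{(\Nt+1)/M}$ then give $\mathbb{E}\|\barronf-\hat{\barronf}_M\|_{L^\infty(\Omega_t)}\le 4\Lambda\sqrt{(\Nt+1)/M}$, and the claim follows by picking a good realization.

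The paper gives no proof of its own here; it only restates Theorem~12 of \cite{ma2020towards}. Your argument is essentially the standard Monte Carlo/Rademacher-complexity proof of that cited result. It additionally makes the constant explicit and shows that the factor $\sqrt{\Nt+1}$ comes from $\sup_{t\in\Omega_t}\|(t,1)\|$ under the Euclidean normalization of $(b,c)$.
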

To determine whether a given function belongs to $\mathcal{B}(\Omega_t)$ directly from Definition~\ref{def:Barron-space} can be a difficult task. 
However, Proposition 2 in \cite{ma2022barron} (which is a restated version of Theorem 6 in \cite{klusowski2016risk}) provides a sufficient spectral condition for membership in $\mathcal{B}(\Omega_t)$.
We present it below.
\begin{prop}[Spectral Criterion]
    \label{prop:spectral-criterion}
Let $\barronf$ be a continuous real valued function on $\Omega_t$ and let
 \begin{equation}
S[\barronf] := \inf_{\barronf_e} \int_{\mathbb{R}^{\Nt}} \|\omega\|^2_1\,|\widehat{\barronf}_e(\omega)|\,d\omega
\label{eq:spectral-norm}
\end{equation}
where the infimum is taken over all extensions $\barronf_e$ of $\barronf$ to $\mathbb{R}^{\Nt}$ and \(\widehat{\barronf}_e(\omega) = \int_{\mathbb{R}^{\Nt}} \barronf_e(t)e^{-i\omega\cdot t}\,dt\) denotes the Fourier transform of \(\barronf_e\). 
If $S[\barronf]$ is finite, then \(\barronf\in\mathcal{B}(\Omega_t)\). Moreover we have the bound
\begin{equation}
\|\barronf\|_{\mathcal{B}(\Omega_t)} \lesssim S[\barronf]+\|\nabla_t \barronf(0)\|+|\barronf(0)|
\end{equation}
\end{prop}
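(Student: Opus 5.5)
The plan is to follow the argument of Klusowski--Barron \cite{klusowski2016risk}, as recalled in \cite{ma2022barron}. Fix an extension $\barronf_e$ of $\barronf$ with $\int\|\omega\|_1^2|\widehat{\barronf}_e(\omega)|\,d\omega<\infty$. Write $\widehat{\barronf}_e(\omega)=|\widehat{\barronf}_e(\omega)|e^{i\theta(\omega)}$. The starting point is the Taylor-subtracted Fourier representation, valid for $t\in\Omega_t$,
\begin{equation*}
\barronf(t)-\barronf(0)-\nabla_t\barronf(0)\cdot t=\frac{1}{(2\pi)^{\Nt}}\int_{\mathbb{R}^{\Nt}}\bigl(\cos(\omega\cdot t+\theta(\omega))-\cos\theta(\omega)+\sin\theta(\omega)\,\omega\cdot t\bigr)\,|\widehat{\barronf}_e(\omega)|\,d\omega .
\end{equation*}
Here the real part has been taken. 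The integrand is bounded by $\tfrac12|\omega\cdot t|^2|\widehat{\barronf}_e|\le\tfrac12\|\omega\|_1^2|\widehat{\barronf}_e|$, so the integral converges absolutely under the assumption $S[\barronf]<\infty$, even though $\widehat{\barronf}_e$ itself need not be integrable near $\omega=0$.

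\textbf{Main obstacle.} Justifying this identity rigorously is the step I expect to be the main obstacle. The difficulty is that Fourier inversion for $\barronf_e$ is not directly available. I would first prove it for mollified truncations $\barronf_e*\eta_\varepsilon$. I would then pass to the limit by dominated convergence, using the quadratic bound above together with continuity of $\barronf$. The same argument shows that $\nabla_t\barronf(0)$ is well defined.

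\textbf{Ridge-function representation.} For fixed $\omega\neq0$ set $z=\omega\cdot t/\|\omega\|_1$. Since $t\in[0,1]^{\Nt}$, we have $|z|\le\|t\|_\infty\le1$. Define $g(z)=\cos(\|\omega\|_1z+\theta)$. Taylor's formula with integral remainder gives, for $|z|\le1$,
\begin{equation*}
g(z)-g(0)-g'(0)z=\int_0^1\bigl[(z-u)_+\,g''(u)+(-z-u)_+\,g''(-u)\bigr]\,du ,
\end{equation*}
where $|g''|\le\|\omega\|_1^2$. Each term $(\pm z-u)_+$ is a ReLU unit $\sigma(b\cdot t+c)$ with $b=\pm\omega/\|\omega\|_1$ and $c=-u\in[-1,0]$, so that $\|b\|\le\|b\|_1=1$ and $\|b\|+|c|\le2$.

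Substituting this into the Fourier integral expresses $\barronf(t)-\barronf(0)-\nabla_t\barronf(0)\cdot t$ as an integral of ReLU ridge functions against a signed measure in $(\omega,u,\pm)$. The total variation of this measure is at most $2(2\pi)^{-\Nt}\int\|\omega\|_1^2|\widehat{\barronf}_e|\,d\omega$. I would normalize it to a probability measure $\rho$ and push the sign and total mass into the outer weight $w$. This gives a Barron representation with $\int|w|(\|b\|+|c|)\,d\rho\lesssim\int\|\omega\|_1^2|\widehat{\barronf}_e|$.

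\textbf{Affine part and conclusion.} The remaining affine part is handled explicitly. The constant satisfies $1=\sigma(0\cdot t+1)$, with cost $|\barronf(0)|$. For $t\in\Omega_t$ we have $t_i=\sigma(e_i\cdot t)$, so $\nabla_t\barronf(0)\cdot t=\sum_i\partial_i\barronf(0)\,\sigma(e_i\cdot t)$, with cost $\|\nabla_t\barronf(0)\|_1\lesssim\|\nabla_t\barronf(0)\|$; the implicit constant here may depend on $\Nt$, which is absorbed in $\lesssim$. I would mix the three representations into a single probability measure by convex combination, rescaling the weights by the inverse mixture probabilities. This shows that the Barron norm is subadditive, so
\begin{equation*}
\|\barronf\|_{\mathcal{B}(\Omega_t)}\lesssim\int_{\mathbb{R}^{\Nt}}\|\omega\|_1^2|\widehat{\barronf}_e(\omega)|\,d\omega+\|\nabla_t\barronf(0)\|+|\barronf(0)| .
\end{equation*}
Taking the infimum over extensions $\barronf_e$ yields the bound of Proposition~\ref{prop:spectral-criterion}.
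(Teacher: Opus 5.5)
Your proposal is correct and is essentially the Klusowski--Barron argument that the paper cites instead of proving (Proposition~2 of \cite{ma2022barron}, restating Theorem~6 of \cite{klusowski2016risk}): the Taylor-subtracted Fourier representation, the second-order integral remainder rewritten as a superposition of ReLU ridges with $\|b\|+|c|\le 2$, explicit ReLU representations of the affine part, and subadditivity of the Barron norm. Your ``main obstacle'' is milder than you expect, since the statement defines $\widehat{\barronf}_e$ by a Lebesgue integral, so $\barronf_e\in L^1$ and $\widehat{\barronf}_e$ is bounded, hence integrable once $S[\barronf]<\infty$; ordinary $L^1$ Fourier inversion plus continuity of $\barronf$ on $\Omega_t$ then gives the identity and $\nabla_t\barronf(0)$ without mollification, and writing $\nabla_t\barronf(0)\cdot t=\sigma(a\cdot t)-\sigma(-a\cdot t)$ with $a=\nabla_t\barronf(0)$ bounds the linear part by $2\|\nabla_t\barronf(0)\|$ with no $\Nt$-dependent constant.
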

A direct consequence of Proposition~\ref{prop:spectral-criterion} is that sufficiently regular Sobolev functions belong to the Barron space.
\begin{lem}[Sobolev-to-Barron Embedding]
    \label{lem:sobolev-to-barron}
If $\barronf\in H^l(\Omega_t)$ with $l>\Nt/2+2$ then $\barronf\in\mathcal{B}(\Omega_t)$.
\end{lem}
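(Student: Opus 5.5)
The plan is to verify the hypothesis of Proposition~\ref{prop:spectral-criterion}: we will show that $S[\barronf]<\infty$ and that the point values $\barronf(0)$ and $\nabla_t\barronf(0)$ are finite. The construction has three steps.

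\textbf{Step 1 (extension).} Since $\Omega_t=[0,1]^{\Nt}$ is a Lipschitz domain, we use a Stein (or reflection-based) extension operator $E:H^l(\Omega_t)\to H^l(\mathbb{R}^{\Nt})$. Multiplying by a smooth cutoff that equals one on $\Omega_t$, we may assume $\barronf_e:=E\barronf$ has compact support. The bound $\|\barronf_e\|_{H^l(\mathbb{R}^{\Nt})}\lesssim\|\barronf\|_{H^l(\Omega_t)}$ then holds. This $\barronf_e$ is an admissible extension in \eqref{eq:spectral-norm}, so $S[\barronf]\le\int\|\omega\|_1^2|\widehat{\barronf}_e(\omega)|\,d\omega$.

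\textbf{Step 2 (spectral integral via Cauchy--Schwarz).} This is the key step. Using $\|\omega\|_1^2\le \Nt\|\omega\|^2\le \Nt(1+|\omega|^2)$, we write
\begin{equation*}
\int_{\mathbb{R}^{\Nt}}\|\omega\|_1^2|\widehat{\barronf}_e(\omega)|\,d\omega
\le \Nt\Bigl(\int_{\mathbb{R}^{\Nt}}(1+|\omega|^2)^{2-l}\,d\omega\Bigr)^{1/2}
\Bigl(\int_{\mathbb{R}^{\Nt}}(1+|\omega|^2)^{l}|\widehat{\barronf}_e(\omega)|^2\,d\omega\Bigr)^{1/2}.
\end{equation*}
By Plancherel, the second factor is comparable to $\|\barronf_e\|_{H^l(\mathbb{R}^{\Nt})}$. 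The first factor is finite if and only if $2(l-2)>\Nt$, that is, $l>\Nt/2+2$, which is exactly the hypothesis. We therefore expect this to be the main (though routine) obstacle: checking that the exponent bookkeeping matches the threshold, and making sure the Fourier convention in Proposition~\ref{prop:spectral-criterion} only introduces harmless $(2\pi)$ factors in Plancherel.

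\textbf{Step 3 (point values).} Since $l>\Nt/2+2>\Nt/2+1$, the Sobolev embedding $H^l(\Omega_t)\hookrightarrow C^1(\overline{\Omega_t})$ holds. Alternatively, Fourier inversion combined with the same Cauchy--Schwarz bound shows that $\widehat{\barronf}_e$ and $|\omega|\,\widehat{\barronf}_e$ are integrable. Either way, $|\barronf(0)|+\|\nabla_t\barronf(0)\|\lesssim\|\barronf\|_{H^l(\Omega_t)}$, and these are classical values at the corner $0\in\Omega_t$.

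Proposition~\ref{prop:spectral-criterion} then gives $\barronf\in\mathcal{B}(\Omega_t)$, together with the quantitative bound $\|\barronf\|_{\mathcal{B}(\Omega_t)}\lesssim\|\barronf\|_{H^l(\Omega_t)}$, with a constant depending on $\Nt$ and $l$. This quantitative form is worth recording for later use with Lemma~\ref{lem:coef-Hk}.
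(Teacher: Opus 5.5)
Your proposal is correct and follows essentially the same route as the paper: Stein extension, Cauchy--Schwarz against the $H^l$ Fourier weight with the integrability threshold $l>\Nt/2+2$, then Proposition~\ref{prop:spectral-criterion}. Your Step~3 (continuity and the point values $\barronf(0)$, $\nabla_t\barronf(0)$ via Sobolev embedding, giving the quantitative Barron-norm bound) is what the paper does separately in Corollary~\ref{cor:finite-element-coefficients-barron}, rather than inside the lemma's proof.
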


\begin{rem}[Regularity Requirement]
\label{rem:sobolev-barron-discussion}
The condition $l>\Nt/2+2$ in Lemma~\ref{lem:sobolev-to-barron} ensures, via a Sobolev embedding argument, that sufficiently smooth functions belong to the Barron space. Although this requirement becomes more restrictive as $\Nt$ increases, it is not a limitation in the present setting. Indeed, by Lemma~\ref{lem:coef-Hk}, the finite element coefficients $u_k(t)$ belong to $H^k(\Omega_t)$ for all $k\ge 1$, and hence the condition is satisfied for any finite $\Nt$ by choosing $k$ sufficiently large.

More generally, if such high Sobolev regularity is not available, membership in the Barron space is no longer guaranteed by this argument. In that case, the dimension-independent approximation rates associated with Barron functions may deteriorate, and global surrogate models such as ELMs may require larger feature dimensions or alternative constructions to achieve comparable accuracy.
\end{rem}

\begin{proof}
First we apply Stein's extension theorem to extend $\barronf$ to $\mathbb{R}^{\Nt}$, call this extension $\barronf_e$. The extension operator is bounded, thus $\|\barronf_e\|_{H^l(\mathbb{R}^{\Nt})} \lesssim \|\barronf\|_{H^l(\Omega_t)}$. 
Then we note that
\begin{align}
        S[\barronf] \leq \int_{\mathbb{R}^{\Nt}} \|\omega\|^2_1\,|\widehat{\barronf}_e(\omega)|\,d\omega &=  \int_{\mathbb{R}^{\Nt}} \|\omega\|^2_1(1+\|\omega\|^2)^{-l/2}\,(1+\|\omega\|^2)^{l/2}|\widehat{\barronf}_e(\omega)|\,d\omega \\
&\lesssim \bigg(\int_{\mathbb{R}^{\Nt}} \|\omega\|_1^4 (1+\|\omega\|^2)^{-l}d\omega \bigg)^{1/2} \|\barronf_e\|_{H^l(\mathbb{R}^{\Nt})}\\
&\lesssim \bigg(\int_{\mathbb{R}^{\Nt}} \|\omega\|_1^4 (1+\|\omega\|^2)^{-l}d\omega \bigg)^{1/2} \|\barronf\|_{H^l(\Omega_t)}
\end{align}
The integral on the last line is finite if and only if $l>\Nt/2+2$. Therefore, if $l>\Nt/2+2$ then
\begin{align}
    S[\barronf]\lesssim \|\barronf\|_{H^l(\Omega_t)}
\end{align}
We conclude that $\barronf\in\mathcal{B}(\Omega_t)$ by invoking Proposition~\ref{prop:spectral-criterion}.
\end{proof}
\begin{cor}[Finite Element Coefficients are Barron Functions]
    \label{cor:finite-element-coefficients-barron}
The finite element coefficients $u_k(t)$ are in $\mathcal{B}(\Omega_t)$. Moreover, we have the bound
\begin{equation}
    \|u_k\|_{\mathcal{B}(\Omega_t)} \lesssim \|u_k\|_{H^l(\Omega_t)}
\end{equation}
for any integer $l> \frac{\Nt}{2}+2$.
\end{cor}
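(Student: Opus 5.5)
The plan is to combine the parameter regularity of the finite element coefficients with the Sobolev-to-Barron embedding, and then control the point-value terms in Proposition~\ref{prop:spectral-criterion} by a Sobolev embedding.

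\textbf{Step 1: Sobolev regularity of each coefficient.} Fix an integer $l>\Nt/2+2$. Lemma~\ref{lem:coef-Hk} with $k=l$ shows that every coefficient $u_k$ lies in $H^l(\Omega_t)$. It also gives the quantitative bound
\begin{equation*}
\|u_k\|_{H^l(\Omega_t)}\le C_l\,h_x^{-d/2}\Bigl(\|f\|_{W'}+\sum_{i=1}^{\Nt}\|\partial_{t_i}f\|_{W'}\Bigr).
\end{equation*}
This follows because $\|u_k\|_{H^l(\Omega_t)}\le \sum_{|\alpha|\le l}\|\partial_t^\alpha \mathbf{u}\|_{L^2(\Omega_t;\ell^2)}$. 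Since $l>\Nt/2+2$, Lemma~\ref{lem:sobolev-to-barron} applies directly and gives $u_k\in\mathcal{B}(\Omega_t)$.

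\textbf{Step 2: the norm bound.} I will not just quote the membership conclusion; instead I retrace the proof of Lemma~\ref{lem:sobolev-to-barron}. That proof gives $S[u_k]\lesssim \|u_k\|_{H^l(\Omega_t)}$, with the implied constant being the square root of the finite integral $\int\|\omega\|_1^4(1+\|\omega\|^2)^{-l}\,d\omega$ times the Stein extension constant. Proposition~\ref{prop:spectral-criterion} then yields
\begin{equation*}
\|u_k\|_{\mathcal{B}(\Omega_t)}\lesssim S[u_k]+\|\nabla_t u_k(0)\|+|u_k(0)|.
\end{equation*}

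\textbf{Step 3: the point-value terms.} The remaining task is to bound $|u_k(0)|$ and $\|\nabla_t u_k(0)\|$ by the $H^l$ norm. I will use the Sobolev embedding $H^l(\Omega_t)\hookrightarrow C^1(\overline{\Omega_t})$, which is valid on the Lipschitz cube $\Omega_t=[0,1]^{\Nt}$ whenever $l>\Nt/2+1$. That condition is implied by $l>\Nt/2+2$. The embedding gives
\begin{equation*}
|u_k(0)|+\|\nabla_t u_k(0)\|\le \|u_k\|_{C^1(\overline{\Omega_t})}\lesssim \|u_k\|_{H^l(\Omega_t)}.
\end{equation*}
Alternatively, the same bound follows from Fourier inversion of the extension $u_{k,e}$: by Cauchy--Schwarz, $\int(1+\|\omega\|)|\widehat{u}_{k,e}|\,d\omega\lesssim\|u_{k,e}\|_{H^l(\mathbb{R}^{\Nt})}$. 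Combining Steps 2 and 3 gives $\|u_k\|_{\mathcal{B}(\Omega_t)}\lesssim\|u_k\|_{H^l(\Omega_t)}$.

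\textbf{Main obstacle.} The delicate point is making sense of pointwise values, which requires a continuous representative of $u_k$. Two facts handle it. First, Theorem~\ref{thm:Hk-regularity-fem} shows $u_k\in C^\infty(\Omega_t)$, since $\mathbf{u}(t)=\mathbf{A}(t)^{-1}\mathbf{F}(t)$, so the point values are classical. Second, the embedding constants depend on $\Nt$ and $l$ but not on $k$ or $h_x$. All $h_x$-dependence therefore sits in $\|u_k\|_{H^l}$, where it is controlled by the explicit factor $h_x^{-d/2}$ from Lemma~\ref{lem:coef-Hk}.
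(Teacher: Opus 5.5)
Your proposal is correct and follows the paper's proof essentially step for step. Membership comes from Lemma~\ref{lem:coef-Hk} and Lemma~\ref{lem:sobolev-to-barron}; the bound $S[u_k]\lesssim\|u_k\|_{H^l(\Omega_t)}$ comes from retracing that lemma's proof; Proposition~\ref{prop:spectral-criterion} then gives the Barron norm; and the Sobolev embedding $H^l(\Omega_t)\hookrightarrow C^1$ absorbs the point-value terms $|u_k(0)|+\|\nabla_t u_k(0)\|$.
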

\begin{proof}
By Lemma~\ref{lem:coef-Hk} we have that $u_k\in H^l(\Omega_t)$ for any integer $l\ge 1$, so in particular for any integer $l> \frac{\Nt}{2}+2$. Thus, $u_k\in \mathcal{B}(\Omega_t)$ by Lemma~\ref{lem:sobolev-to-barron}.
To prove the bound, we first note that $S[u_k]\lesssim \|u_k\|_{H^l(\Omega_t)}$ for any $l> \frac{\Nt}{2}+2$ by the argument in the proof of Lemma~\ref{lem:sobolev-to-barron}.
Therefore, from the second part of Proposition~\ref{prop:spectral-criterion} we have that
\begin{align}
\|u_k\|_{\mathcal{B}(\Omega_t)} \lesssim S[u_k]+\|\nabla_t u_k(0)\|_1+|u_k(0)|\lesssim \|u_k\|_{H^l(\Omega_t)} + \|\nabla_t u_k(0)\|_1+|u_k(0)|
\end{align}
The Sobolev embedding theorem yields in particular that $u_k \in C^1(\Omega_t)$ with $\|u_k\|_{C^1(\Omega_t)}\lesssim \|u_k\|_{H^l(\Omega_t)}$ for any $l> \frac{\Nt}{2}+2$. We conclude that
\begin{align}
    \|u_k\|_{\mathcal{B}(\Omega_t)} \lesssim \|u_k\|_{H^l(\Omega_t)}
\end{align}
for any $l> \frac{\Nt}{2}+2$.
\end{proof}

Theorems~\ref{thm:Approximation in L^2-norm} and~\ref{thm:Approximation inmax-norm} provide approximation rates for optimally trained two-layer networks. For interpolating ELMs, we do not assume such training but instead rely on the following assumption, which combines random-feature approximation of Barron functions with stability of the minimum-norm interpolant.

\begin{ass}[Random-Feature Approximation and Stable Interpolation for ELMs]
\label{ass:elm-barron-stable}
Let $\sigma$ be the ReLU activation and let the ELM features be
\begin{align}
\psi_m(t)=\sigma(b_m\cdot t + c_m), \qquad m=1,\dots,M,
\end{align}
with $(b_m,c_m)$ sampled i.i.d.\ from a probability distribution $\pi$ on
$S^{\Nt-1}\times[-C,C]$. Let $\{t_j^*\}_{j=1}^J\subset\Omega_t$ be the interpolation
points and let $\Psi\in\mathbb{R}^{J\times M}$ be the feature matrix
$\Psi_{j,m}=\psi_m(t_j^*)$. Assume an overparameterized regime $M\ge \kappa J$ for some fixed $\kappa>1$, and fix $0<\eta<1$.

With probability at least $1-\eta$ over the draw of $\{(b_m,c_m)\}_{m=1}^M$, the
following hold simultaneously:

\proofparagraph{(i) Random-Feature Approximation of Barron Functions.}
For every $\barronf\in\mathcal{B}(\Omega_t)$, there exists a coefficient vector
$w^\ast\in\mathbb{R}^M$ such that
\begin{align}
\Big\|\barronf-\sum_{m=1}^M w_m^\ast \psi_m\Big\|_{L^2(\Omega_t)}
&\le C_{\mathrm{rf}}\,
\frac{\|\barronf\|_{\mathcal{B}(\Omega_t)}\,\sqrt{\log(2/\eta)}}{\sqrt{M}}
\label{eq:rf-best-L2}\\
\Big\|\barronf-\sum_{m=1}^M w_m^\ast \psi_m\Big\|_{L^\infty(\Omega_t)}
&\le C_{\mathrm{rf}}\,
\frac{\sqrt{\Nt+1}}{\sqrt{M}}\,
\|\barronf\|_{\mathcal{B}(\Omega_t)}\,\sqrt{\log(2/\eta)}
\label{eq:rf-best-Linfty}
\end{align}

\proofparagraph{(ii) Quasi-Optimality of the Minimum-Norm Interpolant.}
The matrix $\Psi$ has full row rank ($M\ge J$), and the minimum-norm interpolant
\begin{align}
\hat \barronf(t)=\sum_{m=1}^M w_m \psi_m(t), \qquad
w=\argmin\{\|w\|_2 : \Psi w = (\barronf(t_1^*),\dots,\barronf(t_J^*))^\top\}
\end{align}
satisfies the quasi-optimality estimate
\begin{align}
\|\barronf-\hat \barronf\|_{L^2(\Omega_t)}
\le C_{\mathrm{stab}}\,
\inf_{v\in\mathrm{span}\{\psi_m\}_{m=1}^M}\|\barronf-v\|_{L^2(\Omega_t)}
\end{align}
and analogously in $L^\infty(\Omega_t)$.
Moreover, we assume that for sequences of discretizations with $J\to\infty$ and $M\ge \kappa J$ (for the fixed $\kappa>1$ above), the stability constant $C_{\mathrm{stab}}$ remains bounded; it may depend on $\kappa$ and on the sampling strategy for $\{t_j^*\}$, but is uniform along such sequences.
\end{ass}

\begin{rem}[Justification and Interpretation of Assumption~\ref{ass:elm-barron-stable}]
\label{rem:elm-barron-stable-refs}
The purpose of Assumption~\ref{ass:elm-barron-stable} is to connect available approximation results for two-layer neural networks with the interpolating ELM construction used in this work. In the Barron-space approximation results recalled above, the approximation is achieved by a two-layer network in which both the internal parameters and the outer coefficients are chosen to minimize the approximation error, typically measured in \(L^2(\Omega_t)\). In contrast, in the interpolating ELM setting considered here, the internal parameters are sampled randomly and then fixed, while the outer coefficients are determined by minimum-norm interpolation at the points \(\{t_j^*\}_{j=1}^J\).

These two differences define the gap between the classical Barron-space setting and the interpolating ELM construction, and Assumption~\ref{ass:elm-barron-stable} is precisely designed to bridge this gap. Part~(i) postulates that random sampling of the internal parameters still yields, with high probability, a random-feature space that achieves the same $M^{-1/2}$ approximation rate as in the Barron theory. Part~(ii) postulates that passing from best approximation in this random-feature space to the minimum-norm interpolant does not significantly deteriorate the error. Specifically, the two parts of the assumption may be interpreted and justified as follows.

\proofparagraph{(i).}
The approximation property in part~(i) reflects the fact that functions in the Barron space admit integral representations in terms of ReLU ridge functions. Sampling the internal parameters \((b_m,c_m)\) i.i.d.\ corresponds to a Monte Carlo discretization of such representations and yields dimension-independent \(M^{-1/2}\) approximation rates with high probability. Results of this type go back to Barron and are now standard in the random-feature literature, see, e.g., \cite{barron2002universal,ma2022barron,liu2025integral}.

\proofparagraph{(ii).}
Part~(ii) postulates a stability/quasi-optimality property of the minimum-norm interpolant in the overparameterized regime. Concretely, it is intended to hold when the number of features is sufficiently large relative to the number of interpolation constraints, e.g.\ \(M\ge \kappa J\) for some fixed \(\kappa>1\), and when the interpolation points are sufficiently well distributed in \(\Omega_t\). In this regime, one expects the feature matrix \(\Psi\) to have favorable conditioning properties, so that the minimum-norm interpolant behaves comparably to the best approximation in the span of the sampled features. Related stability and generalization properties of minimum-norm interpolants in overparameterized random-feature, kernel, and neural-network models have been studied in, for example, \cite{ma2019generalization,liang2020multiple, doi:10.1142/S0219530522400115}.

We emphasize that the \(L^2(\Omega_t)\)-version is the primary one from the approximation-theoretic point of view. The analogous assumption in \(L^\infty(\Omega_t)\) is included because it is needed for the max-norm error estimate in Theorem~\ref{thm:error-estimates-elm-high-dim}.

In practice, these properties are typically assessed indirectly through the observed stability and convergence behavior of the surrogate.
\end{rem}

\begin{thm}[Error Estimates: High Dimensional Case]
    \label{thm:error-estimates-elm-high-dim}
    Fix $0<\eta<1$ and assume that Assumption~\ref{ass:elm-barron-stable} holds.
Then, with probability at least $1-\eta$, we have the error estimates (with constants possibly depending on $\eta$ through $\sqrt{\log(2/\eta)}$)
    \begin{align}
        \label{eq:error-estimate ELM L2}
        \|u - \uh\|_{\mathcal{O}} \lesssim h_x^s \|u\|_{L^2(\Omega_t;H^s(\Omega_x))} + \frac{1}{\sqrt{M}}\Bigl(\|f\|_{W'}+\sum_{i=1}^{\Nt}\|\partial_{t_i}f\|_{W'}\Bigr)
    \end{align}
    and
    \begin{align}
        \label{eq:error-estimate ELM Linfty}
            \|u - \uh\|_{L^{\infty}(\Omega_t;L^2(\Omega_x))} \lesssim h_x^s \|u\|_{L^\infty(\Omega_t;H^s(\Omega_x))} + \frac{\sqrt{\Nt+1}}{\sqrt{M}}\Bigl(\|f\|_{W'}+\sum_{i=1}^{\Nt}\|\partial_{t_i}f\|_{W'}\Bigr)
    \end{align}
    where $s=\min(p+1,m)$, $p$ is the degree of the spatial FEM space $V_{h_x}$, $m$ is the regularity of the weak solution $u(\cdot,t)\in H^m(\Omega_x)$, i.e., 
    $m=1$ for the assumption $f\in W'$ and $m=2$ if we have the 
    stronger assumption $f\in L^2(\Omega_x)$.

Under the regime $M\ge \kappa J$, the term $M^{-1/2}$ corresponds (up to constants depending on $\kappa$) to an overall interpolation error of order $J^{-1/2}$ as $J,M\to\infty$.
\end{thm}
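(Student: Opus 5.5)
The proof will mirror the low-dimensional case in Theorem~\ref{thm:error-estimates-low-dim}, with the nodal Lagrange interpolant replaced by the coefficientwise ELM interpolant. We work on the event of probability at least $1-\eta$ on which Assumption~\ref{ass:elm-barron-stable} holds. On this event we split the error as
\begin{equation*}
u-\uh=(u-u_{h_x})+(u_{h_x}-\uh),\qquad u_{h_x}-\uh=\sum_{k=1}^K\bigl(u_k(t)-\widehat u_k(t)\bigr)\varphi_k(x).
\end{equation*}
The spatial term is handled exactly as in \eqref{eq:lowdim-spatial-L2}. The pointwise finite element estimate \eqref{eq:FEM-L2-bound} is integrated in $t$ for the $L^2$ bound, and a supremum over $t$ is taken for the $L^\infty(\Omega_t;L^2(\Omega_x))$ bound. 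This gives the $h_x^s$ terms in \eqref{eq:error-estimate ELM L2} and \eqref{eq:error-estimate ELM Linfty}.

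For the parameter term, each $u_k$ lies in $\mathcal B(\Omega_t)$ by Corollary~\ref{cor:finite-element-coefficients-barron}. The interpolated data are $u_{j,k}=u_k(t_j^*)$, so $\widehat u_k$ is exactly the minimum-norm interpolant of $u_k$ from Assumption~\ref{ass:elm-barron-stable}(ii). Combining (ii) with (i) gives
\begin{equation*}
\|u_k-\widehat u_k\|_{L^2(\Omega_t)}\lesssim C_{\mathrm{stab}}C_{\mathrm{rf}}\sqrt{\log(2/\eta)}\,M^{-1/2}\|u_k\|_{\mathcal B(\Omega_t)}\lesssim M^{-1/2}\|u_k\|_{H^l(\Omega_t)},
\end{equation*}
for a fixed integer $l>\Nt/2+2$, and similarly in $L^\infty$ with the extra factor $\sqrt{\Nt+1}$. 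Note that the event in the assumption is uniform over all Barron functions, so no union bound over $k$ is needed.

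Next we lift these coefficient bounds back to the function $u_{h_x}-\uh$, and this is the main obstacle. If we use the inverse of \eqref{eq:coef-extraction-stability}, that is $\|v_{h_x}\|_{L^2(\Omega_x)}\lesssim h_x^{d/2}\|E v_{h_x}\|_{\ell^2}$, then
\begin{equation*}
\|u_{h_x}-\uh\|_{\mathcal O}^2\lesssim h_x^{d}\sum_k\|u_k-\widehat u_k\|_{L^2(\Omega_t)}^2\lesssim h_x^d M^{-1}\sum_k\|u_k\|_{H^l(\Omega_t)}^2=h_x^dM^{-1}\sum_{|\alpha|\le l}\|\partial_t^\alpha\mathbf u\|^2_{L^2(\Omega_t;\ell^2)}.
\end{equation*}
Lemma~\ref{lem:coef-Hk} bounds the last sum by $h_x^{-d}(\|f\|_{W'}+\sum_i\|\partial_{t_i}f\|_{W'})^2$, so the mesh factors $h_x^{d}$ and $h_x^{-d}$ cancel. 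Taking square roots gives the $M^{-1/2}$ term of \eqref{eq:error-estimate ELM L2} with a constant independent of $h_x$.

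For the max-norm estimate we bound the supremum over $t$ of $\|\sum_k(u_k-\widehat u_k)(t)\varphi_k\|_{L^2(\Omega_x)}$ by $h_x^{d/2}\bigl(\sum_k\|u_k-\widehat u_k\|_{L^\infty(\Omega_t)}^2\bigr)^{1/2}$. We then insert the $L^\infty$ coefficient bound and apply Lemma~\ref{lem:coef-Hk} again, which gives the $\sqrt{\Nt+1}\,M^{-1/2}$ term. The care needed is in the two mass-matrix equivalences on the quasi-uniform mesh, used in opposite directions, and in recording that the implied constants depend on $l$ (hence on $\Nt$) and on $\eta$, but not on $h_x$, $M$ or $J$.

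The closing remark follows from the uniform boundedness of $C_{\mathrm{stab}}$ along sequences with $M\ge\kappa J$. In that regime $M^{-1/2}\le\kappa^{-1/2}J^{-1/2}$, so the bound improves as $M$ grows, and at the minimal admissible choice $M\sim\kappa J$ the parameter error is of order $J^{-1/2}$.
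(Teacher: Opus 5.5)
Your proposal is correct and follows essentially the same route as the paper's proof. It uses the same split $u-\uh=(u-u_{h_x})+(u_{h_x}-\uh)$, coefficientwise Barron membership via Corollary~\ref{cor:finite-element-coefficients-barron} combined with Assumption~\ref{ass:elm-barron-stable}(i)--(ii), and the mass-matrix bound whose factor $h_x^{d/2}$ cancels the $h_x^{-d/2}$ from Lemma~\ref{lem:coef-Hk}, with the max-norm case handled identically via suprema in $t$. Your extra observations are accurate refinements of what the paper leaves implicit: no union bound over $k$ is needed because the event in (i) is uniform over $\mathcal{B}(\Omega_t)$, and the constants depend on $l$, $\Nt$, $\eta$ but not on $h_x$, $M$, $J$.
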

\begin{proof}
Throughout the proof we work on the event of probability at least $1-\eta$ on which Assumption~\ref{ass:elm-barron-stable} holds, and consider sequences with $J\to\infty$ and $M\ge \kappa J$ along which $C_{\mathrm{stab}}$ remains bounded (hence all hidden constants below are uniform along such sequences).
    We will decompose $u - \uh$ as follows
    \begin{align}
        \label{eq:error decomposition}
        u - \uh = (u - u_{h_x}) + (u_{h_x}-\uh)
    \end{align}
    and give bounds for the terms on the right in the two norms. We start by bounding the terms for fixed $t$. To that end, let $t\in \Omega_t$.
For the first term, by standard finite element estimates, we have
    \begin{align}
        \label{eq:spatial L2_x bound ELM}
        \|u(\cdot,t) - u_{h_x}(\cdot,t)\|_{\Omega_x} \lesssim h_x^{s} \|u(\cdot,t)\|_{H^s(\Omega_x)}
    \end{align}
For the second term, we note that
\begin{align}
    u_{h_x}(x,t)- \uh(x,t) = \sum_{k=1}^{K} (u_k(t) - \hat{u}_k(t)) \varphi_k(x)        
    \end{align}
    and as $\mathcal{T}_{h_x}$ is shape-regular and quasi-uniform by assumption, we get
    \begin{align}
        \label{eq:shape-regular ineq}
    \|u_{h_x}(\cdot,t)- \uh(\cdot,t)\|_{\Omega_x} \lesssim h_x^{d/2}\Bigg(\sum_{k=1}^K(u_k(t) - \hat{u}_k(t))^2\Bigg)^{1/2}
    \end{align}
\proofparagraph{$L^2$-Estimate.}
    By first squaring \eqref{eq:spatial L2_x bound ELM}, then integrating over $\Omega_t$ and finally taking square roots we obtain 
    \begin{align}
        \label{eq:spatial L2 bound ELM}
        \|u - u_{h_x}\|_{\mathcal{O}} \lesssim h_x^{s} \|u\|_{L^2(\Omega_t;H^s(\Omega_x))}
    \end{align}
Now, squaring \eqref{eq:shape-regular ineq} and integrating over $\Omega_t$ yields
\begin{align}
    \begin{split}
    \|u_{h_x} - \uh\|^2_{\mathcal{O}} &= \int_{\Omega_t}\|u_{h_x}(\cdot,t)- \uh(\cdot,t)\|^2_{\Omega_x}\dt \lesssim h_x^d\int_{\Omega_t}\sum_{k=1}^K(u_k(t) - \hat{u}_k(t))^2 dt\\
    &=h_x^d\sum_{k=1}^{K}\int_{\Omega_t}(u_k(t) - \hat{u}_k(t))^2 dt=h_x^d\sum_{k=1}^K\|u_k - \hat{u}_k\|_{L^2(\Omega_t)}^2
    \end{split}
\end{align}
By Corollary~\ref{cor:finite-element-coefficients-barron} we have $u_k\in\mathcal{B}(\Omega_t)$.
Here $l>\Nt/2+2$ is chosen such that $u_k\in H^l(\Omega_t)$, as required by Corollary~\ref{cor:finite-element-coefficients-barron}.
By Assumption~\ref{ass:elm-barron-stable}(i), for each $k$ there exists a
random-feature approximation $v_{k,M}\in\mathrm{span}\{\psi_m\}_{m=1}^M$
such that
\begin{align}   
\|u_k - v_{k,M}\|_{L^2(\Omega_t)}
\lesssim \frac{\|u_k\|_{\mathcal{B}(\Omega_t)}}{\sqrt{M}}
\end{align}
By Assumption~\ref{ass:elm-barron-stable}(ii), the minimum-norm interpolant
$\hat u_k$ is quasi-optimal, and therefore satisfies
\begin{align}
\|u_k - \hat u_k\|_{L^2(\Omega_t)}
\lesssim \|u_k - v_{k,M}\|_{L^2(\Omega_t)}
\lesssim \frac{\|u_k\|_{\mathcal{B}(\Omega_t)}}{\sqrt{M}}
\end{align}
Using Corollary~\ref{cor:finite-element-coefficients-barron}, this yields
\begin{align}
\|u_k - \hat u_k\|_{L^2(\Omega_t)}^2
\lesssim \frac{\|u_k\|_{H^l(\Omega_t)}^2}{M}
\end{align}
Thus,
\begin{align}
    \|u_{h_x} - \uh\|^2_{\mathcal{O}} \lesssim \frac{h_x^d}{M}\sum_{k=1}^K\|u_k\|_{H^l(\Omega_t)}^2=\frac{h_x^d}{M}\|\mathbf{u}\|_{H^l(\Omega_t;\ell^2)}^2
\end{align}
Taking square roots and applying Lemma~\ref{lem:coef-Hk} gives
\begin{align}
    \label{eq:u_hx-uh-bound L2 ELM}
    \|u_{h_x} - \uh\|_{\mathcal{O}} \lesssim \frac{1}{\sqrt{M}}\Bigl(\|f\|_{W'}+\sum_{i=1}^{\Nt}\|\partial_{t_i}f\|_{W'}\Bigr)
\end{align}
The triangle inequality on \eqref{eq:error decomposition} with \eqref{eq:spatial L2 bound ELM} and \eqref{eq:u_hx-uh-bound L2 ELM} gives \eqref{eq:error-estimate ELM L2}.
\proofparagraph{Max Norm Estimate.}
If we take the essential supremum over $t\in \Omega_t$ in \eqref{eq:spatial L2_x bound ELM} we obtain
\begin{align}
    \label{eq:spatial Linfty bound ELM}
    \|u - u_{h_x}\|_{L^{\infty}(\Omega_t;L^2(\Omega_x))} \lesssim h_x^{s} \|u\|_{L^\infty(\Omega_t;H^s(\Omega_x))}
\end{align}
Similarly, taking the essential supremum over $t\in \Omega_t$ in \eqref{eq:shape-regular ineq} and squaring yields
\begin{align}
\|u_{h_x}(\cdot,t)- \uh(\cdot,t)\|^2_{L^{\infty}(\Omega_t;L^2(\Omega_x))} &\lesssim h_x^{d} \operatorname*{ess\,sup}_{t\in\Omega_t} \sum_{k=1}^K(u_k(t) - \hat{u}_k(t))^2\\
&\leq h_x^{d}  \sum_{k=1}^K\operatorname*{ess\,sup}_{t\in\Omega_t}|u_k(t) - \hat{u}_k(t)|^2\\
&= h_x^{d}  \sum_{k=1}^K\bigg(\operatorname*{ess\,sup}_{t\in\Omega_t}|u_k(t) - \hat{u}_k(t)|\bigg)^2\\
&=  h_x^{d} \sum_{k=1}^K\|u_k - \hat{u}_k\|^2_{L^{\infty}(\Omega_t)}
\end{align}
By Assumption~\ref{ass:elm-barron-stable}(i), for each $k$ there exists a
random-feature approximation $v_{k,M}\in\mathrm{span}\{\psi_m\}_{m=1}^M$
such that
\begin{align}
\|u_k - v_{k,M}\|_{L^\infty(\Omega_t)}
\lesssim \frac{\sqrt{\Nt+1}}{\sqrt{M}}\,\|u_k\|_{\mathcal{B}(\Omega_t)}
\end{align}
By Assumption~\ref{ass:elm-barron-stable}(ii), the minimum-norm interpolant
$\hat u_k$ satisfies the same bound, i.e.,
\begin{align}
\|u_k - \hat u_k\|_{L^\infty(\Omega_t)}
\lesssim \frac{\sqrt{\Nt+1}}{\sqrt{M}}\,\|u_k\|_{\mathcal{B}(\Omega_t)}
\end{align}
Using Corollary~\ref{cor:finite-element-coefficients-barron}, we conclude
\begin{align}
\|u_k - \hat u_k\|_{L^\infty(\Omega_t)}^2
\lesssim \frac{\Nt+1}{M}\,\|u_k\|_{H^l(\Omega_t)}^2
\end{align}
Hence, 
\begin{align}
    \|u_{h_x}- \uh\|^2_{L^{\infty}(\Omega_t;L^2(\Omega_x))}&\lesssim \frac{h_x^{d}(\Nt+1)}{M}  \sum_{k=1}^K \|u_k\|^2_{H^l(\Omega_t)}\\
    &= \frac{h_x^{d}(\Nt+1)}{M} \|\mathbf{u}\|_{H^l(\Omega_t;\ell^2)}^2
\end{align}
Taking square roots and applying Lemma~\ref{lem:coef-Hk} gives
\begin{align}
    \label{eq:u_hx-uh-bound Linfty ELM}
    \|u_{h_x} - \uh\|_{L^{\infty}(\Omega_t;L^2(\Omega_x))} \lesssim \frac{\sqrt{\Nt+1}}{\sqrt{M}}\Bigl(\|f\|_{W'}+\sum_{i=1}^{\Nt}\|\partial_{t_i}f\|_{W'}\Bigr)
\end{align}
The triangle inequality on \eqref{eq:error decomposition} with \eqref{eq:spatial Linfty bound ELM} and \eqref{eq:u_hx-uh-bound Linfty ELM} gives \eqref{eq:error-estimate ELM Linfty}.
\end{proof}

\section{\boldmath Parameter Reconstruction in QPAT via ROMs}
\label{sec:qpat}

We apply the reduced-order modeling framework to an inverse problem in quantitative photoacoustic tomography (QPAT). The goal is to reconstruct a spatially varying parametrized potential from internal measurements of absorbed energy governed by an elliptic PDE. The reduced-order surrogate enables efficient inversion even in high-dimensional parameter spaces.

\subsection{Model Problem}
\label{subsec:qpat-model}

Let \(\Omega_x \subset \mathbb{R}^d\) be a bounded Lipschitz domain. For each \(t\in\Omega_t\), consider
\begin{equation}
\mu(x,t) = \mu_0(x)+\sum_{j=1}^{\Nt} t_j\,\mu_j(x)
\label{eq:mu-param}
\end{equation}
where \(\{\mu_j\}_{j=0}^{\Nt}\subset L^\infty(\Omega_x)\) are fixed linearly independent functions. The associated state \(u^{\mathrm{phys}}(\cdot,t)\) solves
\begin{equation}
-\Delta u^{\mathrm{phys}}(x,t) + \mu(x,t)\,u^{\mathrm{phys}}(x,t) = 0 \quad \text{in } \Omega_x,\qquad u^{\mathrm{phys}} = g \quad \text{on } \partial\Omega_x
\label{eq:PDE-QPAT}
\end{equation}
with given boundary data \(g \in H^{1/2}(\partial\Omega_x)\) satisfying
\begin{equation}
\min_{x\in\partial\Omega_x} g(x) \ge c > 0
\label{eq:g-positivity}
\end{equation}

To make the analysis of Section~\ref{sec:interpolation-rom} applicable, we reduce \eqref{eq:PDE-QPAT} to homogeneous Dirichlet boundary conditions using a lifting.
Let $g^{\mathrm{lift}}\in H^{1}(\Omega_x)$ satisfy $\gamma(g^{\mathrm{lift}})=g$ on $\partial\Omega_x$, where $\gamma$ denotes the trace operator, and define
\begin{equation}
u(x,t):=u^{\mathrm{phys}}(x,t)-g^{\mathrm{lift}}(x)
\label{eq:tilde-u-def}
\end{equation}
Then $u(\cdot,t)\in H_0^1(\Omega_x)$ solves
\begin{equation}
-\Delta u(x,t)+\mu(x,t)\,u(x,t)=f_g(x,t)
\quad\text{in }\Omega_x,\qquad u=0\ \text{on }\partial\Omega_x
\label{eq:PDE-QPAT-lifted}
\end{equation}
with the (parameter-dependent) right-hand side
\begin{equation}
 f_g(x,t):=\Delta g^{\mathrm{lift}}(x)-\mu(x,t)\,g^{\mathrm{lift}}(x)
\label{eq:fg-def}
\end{equation}

Since $\mu(x,t)$ is affine in $t$, so is $f_g(\cdot,t)$, which means only first-order parameter derivatives are nonzero, and the parametric regularity and approximation results of Section~\ref{sec:interpolation-rom} apply directly.

\paragraph{\boldmath Internal Measurements.}
Below, \(Q\) denotes the \emph{measurement operator}. The measurable quantity is a projection of the absorbed energy,
\begin{equation}
Q[\mu\,u^{\mathrm{phys}}]
\label{eq:internal-data}
\end{equation}
where \(Q:L^2(\Omega_x)\to \mathcal{M}_m\) is a projection onto a finite-dimensional \emph{measurement space} \(\mathcal{M}_m\subset L^2(\Omega_x)\) of dimension \(N_m\). We require \(Q\) to be $L^2(\Omega_x)$-stable in the sense that
\begin{equation}
\|Q[f]\|_{\Omega_x} \lesssim \|f\|_{\Omega_x}
\qquad \text{for all } f\in L^2(\Omega_x)
\label{eq:Q-stability}
\end{equation}

\subsection{Discretization and Parametric Representation}
\label{subsec:qpat-discrete}

We solve the homogenized formulation \eqref{eq:PDE-QPAT-lifted} using the ROMs from Section~\ref{subsec:param-interp} (and recover $u^{\mathrm{phys}}$ by adding the lifting $g^{\mathrm{lift}}$ when needed).
Hence, the discrete parameter dependent solution \(\uh\) has the expansion 
\begin{equation}
\uh(x,t)=\sum_{k=1}^{K} \widehat{u}_k(t)\,\varphi_k(x)
\label{eq:uhN-expansion}
\end{equation}
We write $\uh^{\mathrm{phys}}:=\uh+g^{\mathrm{lift}}$ for the lifted surrogate when forming measurements.
Here $\widehat{u}_k(t)$ are the $t$-interpolated finite element coefficients, using classical interpolation for low-dimensional parameter spaces and the ELM representation for high-dimensional parameter spaces.

\paragraph{Efficient Measurement Evaluation.}
While we assume $\uh^{\mathrm{phys}}:=\uh+g^{\mathrm{lift}}$ for analysis purposes, we in practice do not impose the Dirichlet boundary conditions using a lifting, but rather impose them directly on the discrete solution on the linear algebra level. Hence, 
\begin{equation}
\uh^{\mathrm{phys}}(x,t)=\sum_{k=1}^{K} \widehat{u}^{\mathrm{phys}}_k(t)\,\varphi_k(x)
\label{eq:uh-discrete-no-lift}
\end{equation}
with coefficients for the discrete solution with the non-homogeneous boundary condition.
For notational simplicity, we henceforth drop the superscript ${\mathrm{phys}}$ on the coefficient vector and write
$\widehat u_k(t):=\widehat u_k^{\mathrm{phys}}(t)$ and $\widehat u(t):=(\widehat u_k(t))_{k=1}^K$.
In order to efficiently evaluate the discrete measurement
$Q\bigl[\mu\,\uh^{\mathrm{phys}} \bigr]$
in this situation,
we precompute the \(t\)-independent projections
\begin{equation}
Q_{i,k}:=Q[\mu_i\,\varphi_k] \qquad \text{for } i=0,1,\dots,\Nt,\quad \text{and } k=1,\dots,K
\label{eq:Qik-def}
\end{equation}
with \(\mu_0\) denoting any fixed baseline component if used (otherwise drop \(i=0\)). Then
\begin{equation}
Q\bigl[\mu(\cdot,t)\,\uh^{\mathrm{phys}}(\cdot,t)\bigr]
= \sum_{k=1}^{K} \widehat{u}_k(t)\Bigl(Q_{0,k}+\sum_{i=1}^{\Nt} t_i\,Q_{i,k}\Bigr)
\label{eq:Q-of-t-compact}
\end{equation}

\subsection{Parameter Identification from Internal Data}
\label{subsec:qpat-id}

Given internal data \(Q_{\mathrm{obs}}\in \mathcal{M}_m\), we estimate \(t\) by minimizing the residual functional
\begin{equation}
\hat{t} = \operatorname*{arg\,min}_{t\in\Omega_t} \bigl\|\,Q_{\mathrm{obs}} - Q\bigl[\mu(t)\,\uh^{\mathrm{phys}}(t)\bigr]\,\bigr\|_{\Omega_x}
\label{eq:inverse-objective}
\end{equation}
and recover
\begin{equation}
\widehat{\mu}(x) =\mu_0(x)+ \sum_{j=1}^{\Nt} \hat{t}_j\,\mu_j(x)
\label{eq:recovered-potential}
\end{equation}
For compactness, write \(Q(t):=Q\bigl[\mu(t)\,\uh^{\mathrm{phys}}(t)\bigr]\) and define the loss
\begin{equation}
L(t)=\frac{1}{2}\bigl\|Q_{\mathrm{obs}}-Q(t)\bigr\|_{\Omega_x}^{2}
\label{eq:loss-def}
\end{equation}
A gradient step takes the form
\begin{equation}
t_{\ell+1}=t_{\ell}-\steplen\,\nabla_t L(t_{\ell})
\label{eq:gd-update}
\end{equation}
with step size \(\steplen\), where
\begin{equation}
\nabla_t L(t)= -\bigl(Q_{\mathrm{obs}}-Q(t),\,\nabla_t Q(t)\bigr)_{\Omega_x}
\label{eq:grad_L}
\end{equation}
The parameter \(\steplen\) can either be explicitly set or found by a suitable line-search algorithm. Using \eqref{eq:Q-of-t-compact} and the product rule,
\begin{equation}
\nabla_t Q(t)=\sum_{k=1}^{K}\nabla_t \widehat{u}_k(t)\Bigl(Q_{0,k}+\sum_{i=1}^{\Nt} t_i\,Q_{i,k}\Bigr)
+\sum_{k=1}^{K} \widehat{u}_k(t)\,\bigl(Q_{1,k},\dots,Q_{\Nt,k}\bigr)^{\top}
\label{eq:grad_Q}
\end{equation}
To evaluate \(\nabla_t \widehat{u}_k(t)\) efficiently, we use the ELM representation in \eqref{eq:elm-matix-form} to obtain
\begin{equation}
\nabla_t \widehat{u}_k(t)=B^{\top}D(t)\,w_k
\label{eq:grad_u}
\end{equation}
with \(D(t)=\operatorname{diag}\bigl(\sigma'(b_m\cdot t + c_m)\bigr)\). Define \(z(t)\in\mathbb{R}^{K}\) and \(Z(t)\in\mathbb{R}^{\Nt\times K}\) by
\begin{equation}
z_k(t)=\bigl(Q_{\mathrm{obs}}-Q(t),\,Q_{0,k}+\sum_{i=1}^{\Nt} t_i\,Q_{i,k}\bigr)_{\Omega_x}
\label{eq:z-def}
\end{equation}
\begin{equation}
Z_{i,k}(t)=\bigl(Q_{\mathrm{obs}}-Q(t),\,Q_{i,k}\bigr)_{\Omega_x}
\label{eq:Z-def}
\end{equation}
Then
\begin{equation}
\nabla_t L(t)=-B^{\top}D(t)\,W^{\top}z(t)-Z(t)\,\widehat u(t)
\label{eq:formula_gradL}
\end{equation}
so each gradient step requires one forward ELM pass plus linear algebra with the precomputed \(\{Q_{i,k}\}\).

\subsection{Error Analysis of the Potential Reconstruction}
\label{sec:error-analysis-potential-reconstruction}

Set $u^{\mathrm{phys}}(\cdot,t):=u(\cdot,t)+g^{\mathrm{lift}}$ and $\uh^{\mathrm{phys}}(\cdot,t):=\uh(\cdot,t)+g^{\mathrm{lift}}$.
Let \(t^\dagger\) be the true parameter, and let quantities with superscript $\dagger$ denote the quantities at the true parameter value, for instance \(\mu^\dagger := \mu(\cdot, t^\dagger)\).
Consider the estimator
\begin{equation}
\hat{t} := \operatorname*{arg\,min}_{t\in\Omega_t} \bigl\|\,Q\bigl[\mu(\cdot,t)\,\uh^{\mathrm{phys}}(\cdot,t)\bigr] - Q\bigl[\mu^\dagger\,u^{\dagger,\mathrm{phys}}\bigr]\,\bigr\|_{\Omega_x}
\label{eq:min-problem-error}
\end{equation}
and let quantities with hat denote the quantities at the estimated parameter value, for instance \(\widehat{\mu}=\mu(\cdot,\hat{t})\).

Assume that there exists a neighborhood $U^\dagger \subset \Omega_t$ of $t^\dagger$ such that for all
$t_1,t_2\in U^\dagger$ the stability estimate
\begin{equation}
\|\mu(\cdot,t_1)-\mu(\cdot,t_2)\|_{L^\infty(\Omega_x)} \le C_{\mathrm{stab}} \bigl\|\,Q\bigl[\mu(\cdot,t_1) u^{\mathrm{phys}}(\cdot,t_1)\bigr]-Q\bigl[\mu(\cdot,t_2) u^{\mathrm{phys}}(\cdot,t_2)\bigr]\,\bigr\|_{\Omega_x}
\label{eq:stability-estimate}
\end{equation}
holds (for instance under suitable richness conditions on the
measurement space $\mathcal{M}_m$, see \cite[Proposition~1]{AS22}).

\begin{thm}[Potential Reconstruction Error Estimate]
Assume 
$\hat{t} \in U^\dagger$, and that the surrogate error $\|\widehat{u}^{\mathrm{phys}}-\hatuh^{\mathrm{phys}}\|_{\Omega_x}$ is sufficiently small. Then the error between the reconstructed potential
$\widehat\mu$ and the true potential $\mu^\dagger$ satisfies the following bounds:

\proofparagraph{Low-Dimensional Case.} 
If \(\uh=\pi_{h_t} u_{h_x}\), where \(\pi_{h_t}\) is the nodal Lagrange interpolation operator in the parameter domain, the potential reconstruction error is bounded by
\begin{equation}
\|\widehat{\mu}-\mu^\dagger\|_{L^\infty(\Omega_x)} \lesssim \|\mu^\dagger\|_{L^\infty(\Omega_x)}\Bigl(
    h_x^{s} \|u\|_{L^{\infty}(\Omega_t;H^{s}(\Omega_x))} + h_t^{q+1} |u|_{W^{q+1,\infty}(\Omega_t;L^2(\Omega_x))}
    \Bigr)
\label{eq:error-estimate}
\end{equation}
and the parameter reconstruction error is bounded by
\begin{equation}
\|\hat{t}-t^\dagger\|_{\mathbb{R}^{\Nt}} \lesssim \|\mu^\dagger\|_{L^\infty(\Omega_x)}\Bigl(
    h_x^{s} \ \|u\|_{L^{\infty}(\Omega_t;H^{s}(\Omega_x))} + h_t^{q+1} |u|_{W^{q+1,\infty}(\Omega_t;L^2(\Omega_x))}
\Bigr)
\label{eq:explicit-parameter-error}
\end{equation}
    
\proofparagraph{High-Dimensional Case.} 
If $\uh$ is the ELM-based surrogate in the parameter domain and Assumption~\ref{ass:elm-barron-stable} holds,
the potential reconstruction error is bounded by
\begin{equation}
\|\widehat{\mu}-\mu^\dagger\|_{L^\infty(\Omega_x)} \lesssim \|\mu^\dagger\|_{L^\infty(\Omega_x)}\Bigl(
    h_x^s \|u\|_{L^\infty(\Omega_t;H^s(\Omega_x))} + \frac{\sqrt{\Nt+1}}{\sqrt{M}}\Bigl(\|f_g\|_{W'}+\sum_{i=1}^{\Nt}\|\partial_{t_i} f_g\|_{W'}\Bigr)
\Bigr)
\label{eq:error-estimate-elm}
\end{equation}
and the parameter reconstruction error is bounded by
\begin{equation}
\|\hat{t}-t^\dagger\|_{\mathbb{R}^{\Nt}} \lesssim \|\mu^\dagger\|_{L^\infty(\Omega_x)}\Bigl(
    h_x^s \|u\|_{L^\infty(\Omega_t;H^s(\Omega_x))} + \frac{\sqrt{\Nt+1}}{\sqrt{M}}\Bigl(\|f_g\|_{W'}+\sum_{i=1}^{\Nt}\|\partial_{t_i} f_g\|_{W'}\Bigr)
\Bigr)
\label{eq:explicit-parameter-error-elm}
\end{equation}
    
\end{thm}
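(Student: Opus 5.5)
The plan is to combine the stability estimate \eqref{eq:stability-estimate} with the minimizing property of $\hat t$ and the surrogate error bounds of Theorems~\ref{thm:error-estimates-low-dim} and~\ref{thm:error-estimates-elm-high-dim}. Both $\hat t$ and $t^\dagger$ lie in $U^\dagger$, so applying \eqref{eq:stability-estimate} with $t_1=\hat t$, $t_2=t^\dagger$ gives
\begin{equation*}
\|\widehat\mu-\mu^\dagger\|_{L^\infty(\Omega_x)}\le C_{\mathrm{stab}}\bigl\|Q[\widehat\mu\,\widehat u^{\mathrm{phys}}]-Q[\mu^\dagger u^{\dagger,\mathrm{phys}}]\bigr\|_{\Omega_x}.
\end{equation*}
Inside the norm I insert the surrogate at $\hat t$ and at $t^\dagger$, and use the triangle inequality to split it into three terms:
\begin{itemize}
\item[(a)] $Q[\widehat\mu(\widehat u^{\mathrm{phys}}-\hatuh^{\mathrm{phys}})]$,
\item[(b)] $Q[\widehat\mu\,\hatuh^{\mathrm{phys}}]-Q[\mu^\dagger u_h^{\dagger,\mathrm{phys}}]$,
\item[(c)] $Q[\mu^\dagger(u_h^{\dagger,\mathrm{phys}}-u^{\dagger,\mathrm{phys}})]$.
\end{itemize}

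Term (b) is at most the minimized functional \eqref{eq:min-problem-error} evaluated at $t=t^\dagger$, since $\hat t$ is the minimizer. That value is exactly the norm of (c), so (b) is bounded by (c). Terms (a) and (c) are handled by the $L^2$-stability \eqref{eq:Q-stability} of $Q$ and by $\|\mu v\|_{\Omega_x}\le\|\mu\|_{L^\infty}\|v\|_{\Omega_x}$. The lifting cancels in the differences, so $u^{\mathrm{phys}}-\uh^{\mathrm{phys}}=u-\uh$. This gives
\begin{equation*}
\|\widehat\mu-\mu^\dagger\|_{L^\infty}\lesssim \|\widehat\mu\|_{L^\infty}\|(u-\uh)(\cdot,\hat t)\|_{\Omega_x}+\|\mu^\dagger\|_{L^\infty}\|(u-\uh)(\cdot,t^\dagger)\|_{\Omega_x}.
\end{equation*}

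To replace $\|\widehat\mu\|_{L^\infty}$ by $\|\mu^\dagger\|_{L^\infty}$, I write $\|\widehat\mu\|\le\|\mu^\dagger\|+\|\widehat\mu-\mu^\dagger\|$. The resulting product $\|\widehat\mu-\mu^\dagger\|\cdot\|(u-\uh)(\cdot,\hat t)\|$ is absorbed into the left-hand side, using the hypothesis that the surrogate error is sufficiently small (say $C_{\mathrm{stab}}C\|u-\uh\|\le 1/2$). Alternatively, one can simply use $\|\widehat\mu\|\le\mu_{\max}\lesssim\|\mu^\dagger\|$ since $\mu\ge\mu_{\min}>0$. Both pointwise-in-$t$ errors are then bounded by $\|u-\uh\|_{L^\infty(\Omega_t;L^2(\Omega_x))}$. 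For this I invoke \eqref{eq:error-estimate Linfty} in the low-dimensional case, and \eqref{eq:error-estimate ELM Linfty} with $f=f_g$ in the ELM case. The latter applies because $f_g$ is affine in $t$ and the homogenized problem \eqref{eq:PDE-QPAT-lifted} satisfies Assumption~\ref{ass:assumptions}. This yields \eqref{eq:error-estimate} and \eqref{eq:error-estimate-elm}.

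For the parameter error I use the linear independence of $\{\mu_j\}_{j=1}^{\Nt}$. The map $t\mapsto\sum_j t_j\mu_j$ is injective and linear on the finite-dimensional space $\mathbb{R}^{\Nt}$, so there is $c>0$ with $\|\sum_j s_j\mu_j\|_{L^\infty(\Omega_x)}\ge c\|s\|_{\mathbb{R}^{\Nt}}$ by equivalence of norms. Applying this with $s=\hat t-t^\dagger$ gives $\widehat\mu-\mu^\dagger=\sum_j(\hat t_j-t^\dagger_j)\mu_j$, so \eqref{eq:explicit-parameter-error} and \eqref{eq:explicit-parameter-error-elm} follow from the potential bounds, with the constant absorbing $c^{-1}$.

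I expect the main obstacle to be the $L^\infty(\Omega_t)$ control at the data-dependent point $\hat t$. The error estimates are stated in essential-supremum norms, so I must argue they hold pointwise at every $t$. This follows from the continuity in $t$ of $u$ (Theorem~\ref{thm:Ck-continuity}), of $u_{h_x}$ (Theorem~\ref{thm:Hk-regularity-fem}) and of the surrogate. A second delicate point is handling the $\widehat\mu$ factor via the smallness assumption.
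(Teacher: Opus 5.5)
Your argument is essentially the paper's proof: the stability estimate at $(\hat t,t^\dagger)$, the triangle inequality, global minimality of $\hat t$, $L^2$-stability of $Q$, absorption of $\|\widehat\mu-\mu^\dagger\|_{L^\infty(\Omega_x)}\,\|\widehat u-\hatuh\|_{\Omega_x}$ via the smallness hypothesis, the max-norm surrogate bounds, and norm equivalence on $\mathrm{span}\{\mu_j\}$.

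There is one small slip. Your term (b) is measured against $Q[\mu^\dagger \uh^{\dagger,\mathrm{phys}}]$ rather than against the data $Q[\mu^\dagger u^{\dagger,\mathrm{phys}}]$. Minimality therefore only gives $\|\mathrm{(b)}\|_{\Omega_x}\le\|Q[\widehat\mu\,\hatuh^{\mathrm{phys}}]-Q[\mu^\dagger u^{\dagger,\mathrm{phys}}]\|_{\Omega_x}+\|\mathrm{(c)}\|_{\Omega_x}\le 2\|\mathrm{(c)}\|_{\Omega_x}$. This is harmless, and the paper avoids it with a two-term split whose first term is exactly the minimized functional at $\hat t$. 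Your alternative bound $\|\widehat\mu\|_{L^\infty(\Omega_x)}\le\mu_{\max}\le(\mu_{\max}/\mu_{\min})\|\mu^\dagger\|_{L^\infty(\Omega_x)}$ is also valid under the standing assumptions, and it would remove the need for the smallness hypothesis.
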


\begin{proof}
\proofparagraph{Potential Reconstruction.}
Since \(\hat{t}\) is a global minimizer, it satisfies
\begin{equation}
\bigl\|\,Q\bigl[\widehat{\mu}\hatuh^{\mathrm{phys}}\bigr]-Q\bigl[\mu^\dagger u^{\dagger,\mathrm{phys}}\bigr]\,\bigr\|_{\Omega_x}
\le \bigl\|\,Q\bigl[\mu^\dagger \uh^{\dagger,\mathrm{phys}} \bigr]-Q\bigl[\mu^\dagger u^{\dagger,\mathrm{phys}} \bigr]\,\bigr\|_{\Omega_x}
\label{eq:global-min-estimate}
\end{equation}

The stability estimate \eqref{eq:stability-estimate}, the triangle inequality, \eqref{eq:global-min-estimate}, and the \(Q\)-stability \eqref{eq:Q-stability} yield
\begin{align}
    \label{eq:potential-reconstruction-error-estimate-first}
&\|\widehat{\mu}-\mu^\dagger\|_{L^\infty(\Omega_x)}
\nonumber
\\
&\qquad\quad\lesssim
\bigl\|\,Q\bigl[\widehat{\mu}\widehat{u}^{\mathrm{phys}}\bigr]-Q\bigl[\mu^\dagger u^{\dagger,\mathrm{phys}}\bigr]\,\bigr\|_{\Omega_x}
\\&\qquad\quad\le
\bigl\|\,Q\bigl[\widehat{\mu} \hatuh^{\mathrm{phys}}\bigr]-Q\bigl[\mu^\dagger u^{\dagger,\mathrm{phys}}\bigr]\,\bigr\|_{\Omega_x}
+
\bigl\|\,Q\bigl[\widehat{\mu}\widehat{u}^{\mathrm{phys}}\bigr]-Q\bigl[\widehat{\mu} \hatuh^{\mathrm{phys}}\bigr]\,\bigr\|_{\Omega_x}
\\&\qquad\quad\le
\bigl\|\,Q\bigl[\mu^\dagger \uh^{\dagger,\mathrm{phys}} \bigr]-Q\bigl[\mu^\dagger u^{\dagger,\mathrm{phys}} \bigr]\,\bigr\|_{\Omega_x}
+
\bigl\|\,Q\bigl[\widehat{\mu}\widehat{u}^{\mathrm{phys}}\bigr]-Q\bigl[\widehat{\mu} \hatuh^{\mathrm{phys}}\bigr]\,\bigr\|_{\Omega_x}
\\&\qquad\quad\lesssim
\|\mu^\dagger\|_{L^\infty(\Omega_x)} \| \uh^{\dagger,\mathrm{phys}}-u^{\dagger,\mathrm{phys}} \|_{\Omega_x}
+
\|\widehat{\mu}\|_{L^\infty(\Omega_x)} \| \hatuh^{\mathrm{phys}}-\widehat{u}^{\mathrm{phys}} \|_{\Omega_x}
\\&\qquad\quad=
\|\mu^\dagger\|_{L^\infty(\Omega_x)} \| \uh^{\dagger}-u^{\dagger} \|_{\Omega_x}
+
\|\widehat{\mu}\|_{L^\infty(\Omega_x)} \| \hatuh-\widehat{u} \|_{\Omega_x}
\\&\qquad\quad\lesssim
\|\mu^\dagger\|_{L^\infty(\Omega_x)}
\bigl(
\| \uh^{\dagger}-u^{\dagger} \|_{\Omega_x}
+
\| \hatuh-\widehat{u} \|_{\Omega_x}
\bigr)
+
\|\widehat{\mu} - \mu^\dagger\|_{L^\infty(\Omega_x)} \| \hatuh-\widehat{u} \|_{\Omega_x}
\label{eq:potential-reconstruction-error-estimate-last}
\end{align}
where we in the final step used the triangle inequality once more. Moving the last term to the left-hand side, we obtain
\begin{align}
&\|\widehat{\mu}-\mu^\dagger\|_{L^\infty(\Omega_x)}
\bigl(
1 - C_1 \| \widehat{u} - \hatuh \|_{\Omega_x}
\bigr)
\lesssim \|\mu^\dagger\|_{L^\infty(\Omega_x)}
\bigl(
\| \uh^{\dagger}-u^{\dagger} \|_{\Omega_x}
+
\| \widehat{u} - \hatuh \|_{\Omega_x}
\bigr)
\label{eq:mu-error-pre-new}
\end{align}
where $C_1$ is the accumulated hidden constant in \eqref{eq:potential-reconstruction-error-estimate-first}--\eqref{eq:potential-reconstruction-error-estimate-last}.
Given a sufficiently small surrogate error, for instance
$\|\widehat{u}-\hatuh\|_{\Omega_x}<(2C_1)^{-1}$, and bounding the right-hand side surrogate error using the max norm, we have
\begin{equation}
\|\widehat{\mu}-\mu^\dagger\|_{L^\infty(\Omega_x)}
\lesssim \|\mu^\dagger\|_{L^\infty(\Omega_x)}
\|u - \uh\|_{L^{\infty}(\Omega_t;L^2(\Omega_x))} 
\end{equation}
Finally, applying Theorem~\ref{thm:error-estimates-low-dim} in the low-dimensional case, and Theorem~\ref{thm:error-estimates-elm-high-dim} in the high-dimensional case, we obtain the error bounds \eqref{eq:error-estimate} and \eqref{eq:error-estimate-elm}, respectively.

\proofparagraph{Parameter Reconstruction.}
Recall that
\(\mu(\cdot,t)=\mu_0+\sum_{j=1}^{\Nt} t_j\,\mu_j\), hence
\(
\widehat{\mu}-\mu^\dagger=\sum_{j=1}^{\Nt}(\hat{t}_j-t_j^\dagger)\,\mu_j.
\)
Since the map
\(T:\mathbb{R}^{\Nt}\to L^\infty(\Omega_x)\),
\(T(s)=\sum_{j=1}^{\Nt} s_j\mu_j\), is linear and injective by the linear independence of
\(\{\mu_j\}_{j=1}^{\Nt}\), norm equivalence on finite-dimensional spaces implies the existence
of a constant \(C_\mu>0\) such that
\begin{equation}
\|s\|_{\mathbb{R}^{\Nt}}\le C_\mu\,\|T(s)\|_{L^\infty(\Omega_x)}
\qquad\text{for all } s\in\mathbb{R}^{\Nt}
\label{eq:param-map-inverse}
\end{equation}
Applying \eqref{eq:param-map-inverse} with \(s=\hat{t}-t^\dagger\) yields
\[
\|\hat{t}-t^\dagger\|_{\mathbb{R}^{\Nt}}
\lesssim \|\widehat\mu-\mu^\dagger\|_{L^\infty(\Omega_x)}
\]
Combining this with the potential reconstruction bounds proved above gives
\eqref{eq:explicit-parameter-error} in the low-dimensional case and
\eqref{eq:explicit-parameter-error-elm} in the high-dimensional case.
\end{proof}

\section{\boldmath Numerical Experiments}
\label{sec:numerical-experiments}
We now present a series of numerical experiments designed to assess the performance of the proposed reduced-order models. The experiments fall into two categories.

First, we demonstrate the approximation properties of the surrogate both in the low- and high-dimensional settings. In particular, we study the convergence of the model with respect to the finite element mesh size $h_x$ together with the partition resolution $h_t$ of $\Omega_t$ (the low-dimensional cases) or the number of ReLU units $M$ in the ELM (the high-dimensional cases).

Second, we consider the inverse problem, where an unknown parameter vector is reconstructed from pixel-based observations of the absorbed energy. Here we examine how different choices in the observation operator $Q$ affect reconstruction quality.

Throughout this section, $u$ denotes the physical solution satisfying the prescribed Dirichlet boundary data $g$, and $\uh$ denotes its numerical surrogate; for simplicity, we suppress the superscript ${\mathrm{phys}}$ used in Section~4.

\subsection{Convergence}
\label{subsec:numerical-convergence}
Throughout the numerical examples we considered the spatial domain $\Omega_x=[-1,1]^2$ and the FEM-meshes of mesh size $h_x$ were generated using uniform grids of points in $\Omega_x$. To study the convergence, we employed a manufactured solution $u(x,t)$ to \eqref{eq:main}. Once the potentials $\mu_i(x)$ in \eqref{eq:potential} were specified, the corresponding source $f$ and boundary data $g$ were constructed so that $u(x,t)$ satisfied the PDE.

\begin{rem}[Parameter-Dependent Data]
The construction described above generally introduces dependencies of the source term $f$ and boundary data $g$ on the parameter $t$. This is a consequence of enforcing that the manufactured solution $u(x,t)$ satisfies \eqref{eq:main} exactly. In principle, one would prefer an example where $f$ and $g$ depend solely on the spatial variable $x$ (or at least where $f$ remains affine in $t$), as this is the setting covered by the theoretical analysis. However, finding a nontrivial closed-form solution in that case is difficult. For this reason, we adopt the manufactured-solution approach for the convergence study, even though it introduces additional $t$-dependencies not covered by the affine-in-$t$ data assumptions used in the theory.

If one were to extend the theory to non-affine parameter dependence in $f$, the surrogate construction itself would remain unchanged, but the parameter-regularity estimates in theorems~\ref{thm:regularity-Hm} and \ref{thm:Hk-regularity-fem} would have to retain higher-order derivatives $\partial_t^\alpha f$ up to the order under consideration. In particular, the recursive differentiated equations would involve these higher-order terms explicitly, leading to more cumbersome notation and constants, but not to a conceptual change in the approximation framework.
\end{rem}

\paragraph{Low-Dimensional Setting.}
To evaluate the performance in a low-dimensional parameter space $\Omega_t$, we considered the case where $\Nt=2$. The exact solution was prescribed as
\begin{align}
u(x,t)=\sin\!\big(2\pi(t\cdot x)\big)+2\Big(\exp(-5\|x-t\|^2)+\exp(-5\|x+t\|^2)\Big)+1
\end{align}
The potentials in \eqref{eq:potential} were chosen as
\begin{align}
\mu_0=0.1, \quad \mu_1(x)=\exp(-2\|x-p\|^2),\quad \mu_2(x)=\exp(-2\|x+p\|^2) 
\end{align}
where $p=[0.5,0.5]^{\top}$. For the discretization of the parameter domain, we partitioned $\Omega_t$ into a simplicial mesh of mesh size $h_t$ generated from a uniform grid of points. The FEM solution to \eqref{eq:fem} was computed at each node in the mesh, and to evaluate the model at an arbitrary point in $\Omega_t$ we used piecewise-linear interpolation. For each different choice of discretization parameter pairs $(h_x,h_t)$, we computed the $L^2$-error 
\begin{align}
\|u-\uh\|_{\mathcal{O}}
\end{align}
The result can be seen in Figure~\ref{fig:triangulation-convergence_N=2}. We can see that the error levels out when refining $h_t$ or $h_x$ while keeping the other parameter fixed. From the log–log plot of the error versus the spatial mesh size $h_x$ for the finest parameter discretization $h_t$, the observed convergence rate is approximately 1.90, which is close to the theoretical second-order rate. In Table \ref{tab:errors_N=2} we have summarized the relative $L^2$-errors for the different runs. 
\begin{figure}
\centering
\begin{subfigure}[ht]{0.33\textwidth}
\centering
\includegraphics[width=\textwidth]{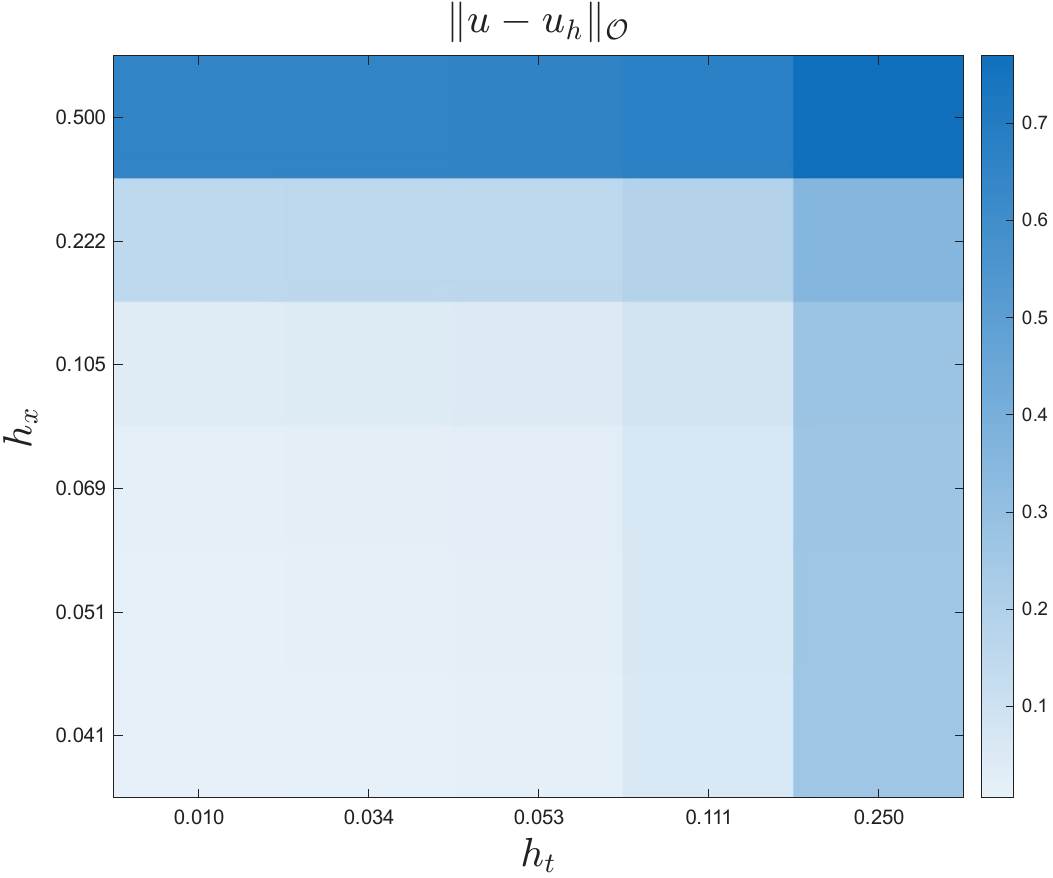}
\caption{}
\end{subfigure}
\begin{subfigure}[ht]{0.32\textwidth}
\centering
\includegraphics[width=\textwidth]{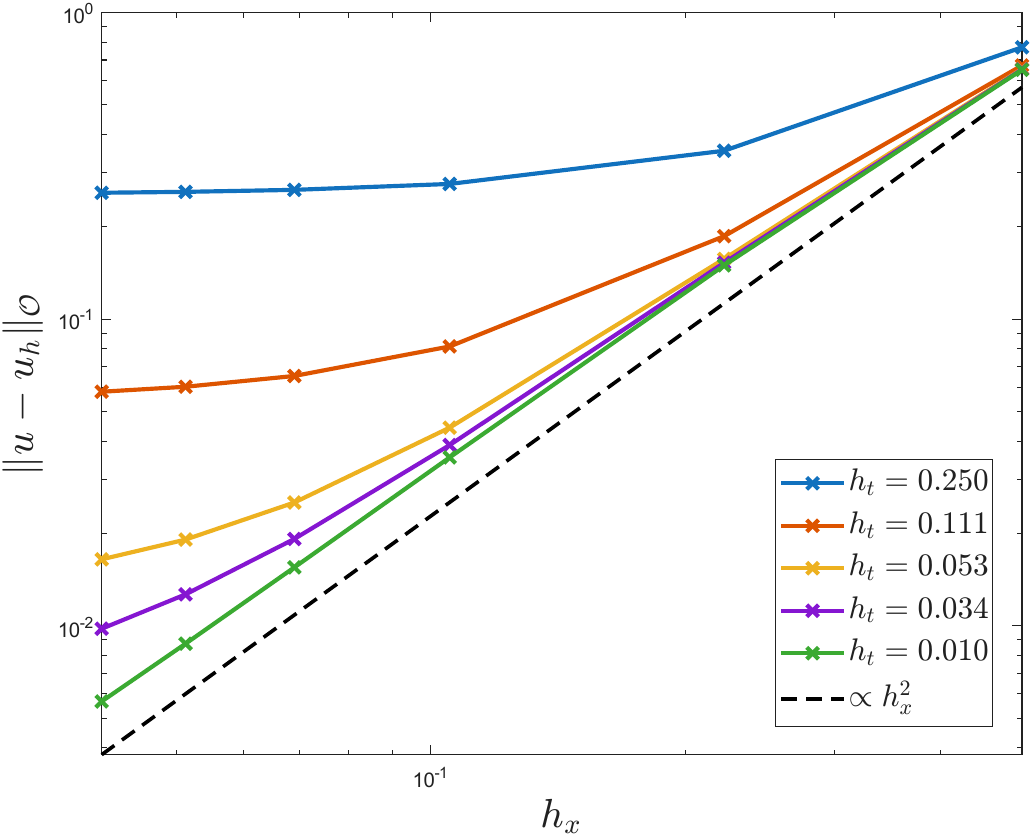}
\caption{ }
\end{subfigure}
\begin{subfigure}[ht]{0.32\textwidth}
\centering
\includegraphics[width=\textwidth]{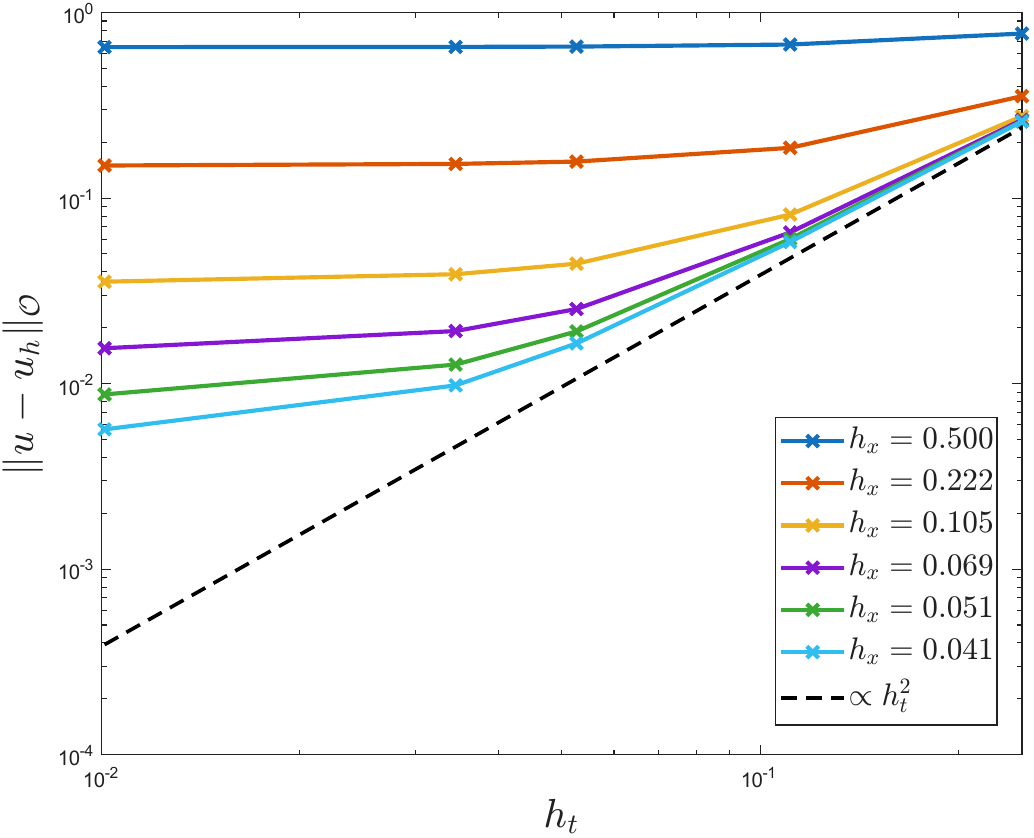}
\caption{ }
\end{subfigure}
\caption{\emph{Convergence in low-dimensional setting.} \textbf{(a)} The heatmap shows approximation error for different values of the pair $(h_x,h_t)$. \textbf{(b)} The figure shows the approximation error as a function of $h_x$ for different choices of $h_t$ including a reference line to indicate the theoretical convergence rate of $2.0$ with respect to $h_x$. \textbf{(c)} The figure shows the approximation error as a function of $h_t$ for different choices of $h_x$ including a reference line to indicate the theoretical convergence rate of $2.0$ with respect to $h_t$.}
\label{fig:triangulation-convergence_N=2}
\end{figure}

\begin{table}
\centering
\caption{\emph{Error in low-dimensional setting.} The relative $L^2$-error for different grids on $\Omega_x$ and $\Omega_t$.}
\label{tab:errors_N=2}
\small
\begin{tabular}{lcccccc}
\toprule
$\Omega_x$-grid & $5\times 5$ & $10\times 10$ & $20\times 20$ & $30\times 30$ & $40\times 40$ & $50\times 50$ \\
\midrule
$\Omega_t$-grid & error & error & error & error & error & error \\
\midrule
$5\times 5$     & 2.2e--1 & 1.02e--1 & 7.94e--2 & 7.59e--2 & 7.48e--2 & 7.43e--2 \\
$10\times 10$   & 1.93e--1 & 5.36e--2 & 2.34e--2 & 1.88e--2 & 1.73e--2 & 1.67e--2 \\
$20\times 20$   & 1.88e--1 & 4.52e--2 & 1.27e--2 & 7.26e--3 & 5.49e--3 & 4.74e--3 \\
$30\times 30$   & 1.88e--1 & 4.39e--2 & 1.12e--2 & 5.52e--3 & 3.64e--3 & 2.81e--3 \\
$100\times 100$ & 1.87e--1 & 4.31e--2 & 1.02e--2 & 4.46e--3 & 2.51e--3 & 1.63e--3 \\
\bottomrule
\end{tabular}
\end{table}

\paragraph{High-Dimensional Setting.}
To test the ELM surrogate used for higher-dimensional parameter spaces $\Omega_t$, we fixed $\Nt=10$ and prescribed the exact solution 
\begin{align}
u(x,t)=\sin\big((t\cdot v^{(1)})\pi x_1\big)\cos\big((t\cdot v^{(2)})\pi x_2\big)+\frac{10x_1^2x_2}{1+6\|t\|}+\|t\|\exp{\bigg(\small{-\frac{4(x_1^2+x_2^2)}{3}}\bigg)}
\end{align}
where $v^{(1)}, v^{(2)}\in\IR^{10}$ are two unit vectors (initialized randomly). We specified the potentials $\mu_i(x)$ as
\begin{align}
\begin{cases}
\mu_0(x)=0.1\\
\mu_i(x)=500\exp(-50\|x-p_i\|^2), \quad i=1,2,\dots,10
\end{cases}
\end{align}
where $p_i$ are random points in $\Omega_x$. For each choice of mesh size $h_x$ and number of ReLU units $M$, we generated 50 independent ELMs by randomly assigning their inner weights and biases, while fitting the outer parameters to a training dataset by solving \eqref{eq:constrained-problem}. The training dataset for each ELM was generated by first sampling $J$ interpolation points in $\Omega_t$ using scrambled Sobol sequences. We imposed the fixed relationship $M=4J$ between the number of interpolation points and ReLU units. Equivalently, for each run we take $J=M/4$ interpolation points. We then computed the corresponding finite element solutions for each parameter.

To evaluate the accuracy of the ELMs, we computed the $L^2$-error $\|u-\uh\|_{\mathcal{O}}$. The integral over the 10-dimensional parameter space was approximated by Monte Carlo integration with 10 000 randomly chosen samples of $t$, while the spatial integral was approximated by Gaussian quadrature on the FEM mesh. For each ELM, the Monte Carlo procedure was repeated 10 times and the resulting errors were averaged to obtain a stable estimate of the error for each ELM. Finally, we report the mean of these errors across the 50 independently constructed ELMs, to reduce the effect of random ReLU features and to provide a representative measure of typical model performance. The errors, averaged over the 50 ELMs for various choices of $(h_x,M)$ are shown in Figure~\ref{fig:elm-convergence_N=10}. 

Here, we also see that the error levels out when refining $h_x$ while keeping $M$ fixed. For the finest spatial discretization $h_x$, the observed convergence rate for the ELM surrogate (with respect to $M$) is approximately $-0.47$, which is close to the theoretical rate $-0.5$ in Theorem~\ref{thm:error-estimates-elm-high-dim}. 

In the convergence plots, we also include results obtained with fully trained neural networks having the same architecture as the ELMs, but with adaptive inner weights and biases. For the finest $h_x$ we varied $M$ and for each $M$ we trained 50 independent neural networks, with randomly initialized parameters, using the Adam optimizer (a variant of stochastic gradient descent). For each network we performed 100 000 iterations in the optimization, a batch size of 100 and a learning rate of 5e--4 which was reduced by a factor of $0.25$ after half of the iterations. A similar experiment was conducted varying $h_x$ instead while keeping $M$ fixed at its largest value. 

We can see that the error for the fully trained networks is smaller than that of the corresponding ELMs, demonstrating the benefits of allowing the inner weights and biases to be adaptive. On the other hand, training a neural network is computationally more expensive and challenging than training an ELM, as the optimization problem is non-convex and the number of trainable parameters is larger.

\begin{figure}
\centering
\begin{subfigure}[ht]{0.33\textwidth}
\centering
\includegraphics[width=\textwidth]{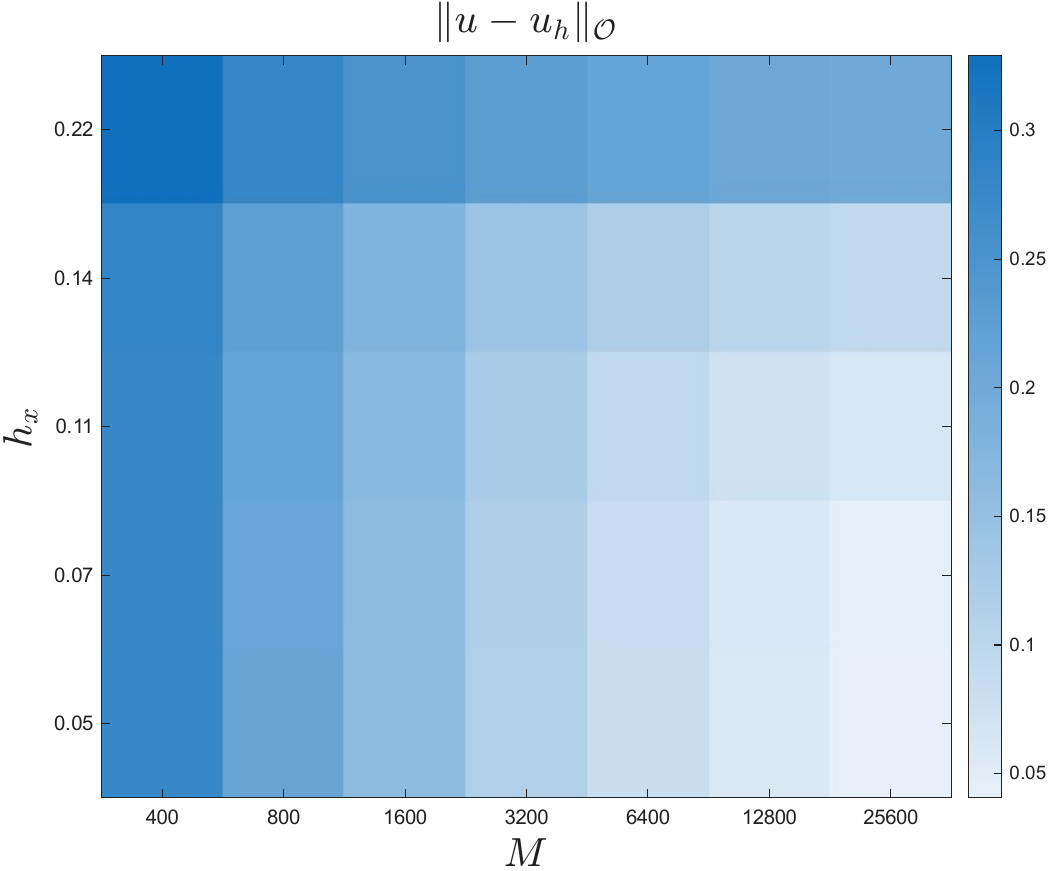}
\caption{}
\end{subfigure}
\begin{subfigure}[ht]{0.32\textwidth}
\centering
\includegraphics[width=\textwidth]{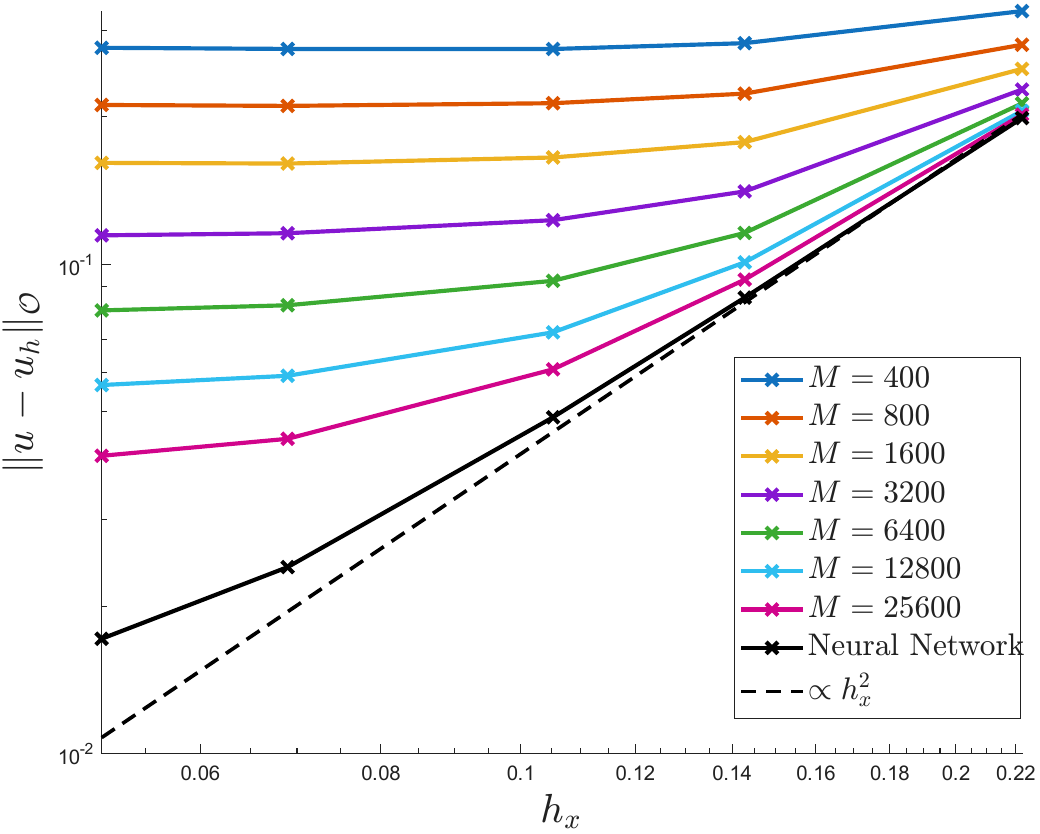}
\caption{ }
\end{subfigure}
\begin{subfigure}[ht]{0.32\textwidth}
\centering
\includegraphics[width=\textwidth]{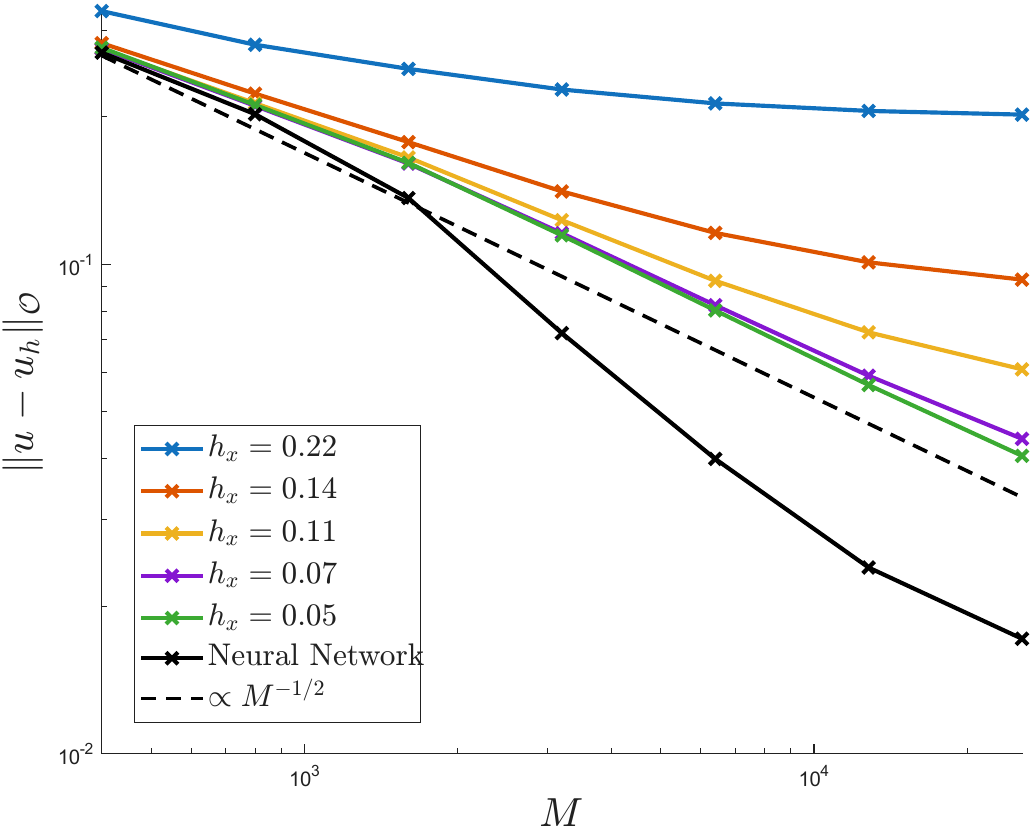}
\caption{ }
\end{subfigure}
\caption{\emph{Convergence in high-dimensional setting.} \textbf{(a)} The heatmap shows approximation error for the ELMs for different values of the pair $(h_x,M)$. \textbf{(b)} The figure shows the approximation error as a function of $h_x$ for different choices of $M$. A reference line is included to indicate the theoretical convergence rate of $2.0$ with respect to $h_x$. The errors for the fully trained networks with $M=25600$ ReLU units are also included. \textbf{(c)} The figure shows the approximation error as a function of $M$ for different choices of $h_x$. A reference line is included to indicate the theoretical convergence rate of $-0.5$ with respect to $M$. The errors for the fully trained networks with $h_x=0.05$ are also included.}
\label{fig:elm-convergence_N=10}
\end{figure}

In Table \ref{tab:errors_N=10}, we summarize the averaged relative $L^2$-errors for the different runs. The reported errors and standard deviations are the averages over the 50 independent ELMs for each pair $(h_x,M)$.

These numerical results are also consistent with the assumptions underlying the theoretical analysis in the high-dimensional setting. In particular, the observed $M^{-1/2}$-type convergence for the ELM surrogate is in line with Assumption~\ref{ass:elm-barron-stable}, and the comparable behavior of the fully trained two-layer networks supports the connection to the Barron-space approximation framework discussed in Remark~\ref{rem:elm-barron-stable-refs}.

\begin{table}
\centering
\caption{\emph{Error in high-dimensional setting.} The average of the relative $L^2$-error and the estimated standard deviation for the ELMs for different $\Omega_x$-grids and numbers of ReLU units.}
\label{tab:errors_N=10}
\small
\begin{tabular}{lcccccccccc}
\toprule
$\Omega_x$-grid & \multicolumn{2}{c}{$10\times 10$} & \multicolumn{2}{c}{$15\times 15$} & \multicolumn{2}{c}{$20\times 20$} & \multicolumn{2}{c}{$30\times 30$} & \multicolumn{2}{c}{$40\times 40$} \\
\cmidrule(lr){2-3}\cmidrule(lr){4-5}\cmidrule(lr){6-7}\cmidrule(lr){8-9}\cmidrule(lr){10-11}
$M$ & error  & std & error  & std & error & std & error & std & error & std \\
\midrule
400  & 0.148 & 6.2e--3 & 0.127 & 7.1e--3 & 0.124 & 8.4e--3 & 0.124 & 6.7e--3 & 0.124 & 7.7e--3 \\
800  & 0.126 & 3.3e--3 & 0.100 & 4.0e--3 & 0.096 & 4.1e--3 & 0.095 & 4.4e--3 & 0.095 & 4.4e--3 \\
1600 & 0.113 & 2.0e--3 & 0.080 & 2.8e--3 & 0.074 & 3.3e--3 & 0.072 & 3.7e--3 & 0.072 & 3.9e--3 \\
3200 & 0.102 & 1.2e--3 & 0.063 & 2.0e--3 & 0.055 & 2.2e--3 & 0.052 & 2.3e--3 & 0.051 & 2.5e--3 \\
6400 & 0.096 & 5.6e--4 & 0.052 & 1.0e--3 & 0.042 & 1.4e--3 & 0.037 & 1.6e--3 & 0.036 & 1.6e--3 \\
12800 & 0.092& 2.3e--4 & 0.045 & 4.2e--4 & 0.033 & 6.0e--4 & 0.027 & 8.1e--4 & 0.025 & 8.1e--4 \\
25600 & 0.091& 9.9e--5 & 0.042 & 2.0e--4 & 0.027 & 2.7e--4 & 0.020 & 4.0e--4 & 0.018 & 4.6e--4 \\
\bottomrule
\end{tabular}
\end{table}

\subsection{Parameter Reconstruction}
\label{subsec:numerical parameter-reconstruction}
The methodology developed in this work applies to general finite-rank observation operators $Q:L^2(\Omega_x)\to \mathcal{M}_m$, where $\dim(\mathcal{M}_m)=N_m$. In the numerical experiments, however, we focus on a specific choice based on pixel averaging. The computational domain $\Omega_x=[-1,1]^2$ is partitioned into a set of $N_m$ axis-aligned square pixels, enumerated as $P_1,P_2,\hdots,P_{N_m}$ and with equal area $|P|$. We then set
\begin{align}
\mathcal{M}_m=\text{span}\{\mathbbm{1}_{P_i}: i=1,2,\dots,N_m\}
\end{align}
Thus, $\mathcal{M}_m$ is defined to be the span of the indicator functions associated to the pixels. Given the absorbed energy $\mu u$, the function $Q[\mu u]$ is a piecewise-constant function on $\Omega_x$ subordinate to the pixel partition. Specifically, we define $Q$ to be the $L^2$-projection onto $\mathcal{M}_m$ so the measurement on a pixel $P$ is the mean of the absorbed energy over $P$
\begin{align}
Q[\mu u]|_P=\frac{1}{|P|}\int_{P}\mu u \, dx
\end{align}
The action of $Q$ is illustrated in Figure~\ref{fig:Q-operator}.

\begin{figure}
\pdfpxdimen=\dimexpr 1in/100\relax
\centering
\begin{subfigure}[h]{0.45\textwidth}
\centering
\includegraphics[trim={159px 100px 116px 200px},clip,width=\textwidth]{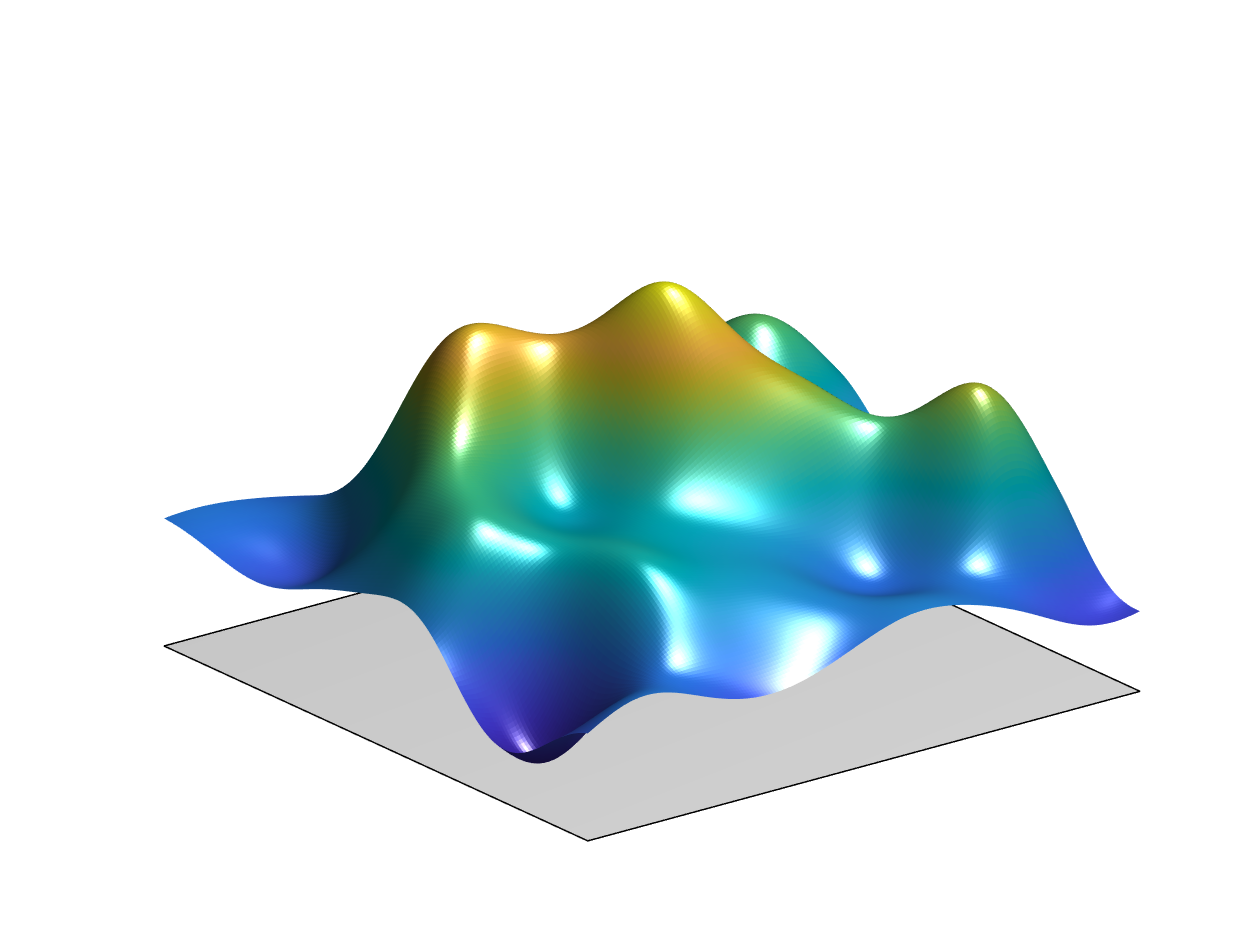}
\caption{$\mu(x,t) u(x,t)$}
\end{subfigure}
\qquad
\begin{subfigure}[h]{0.45\textwidth}
\centering
\includegraphics[trim={159px 100px 116px 200px},clip,width=\textwidth]{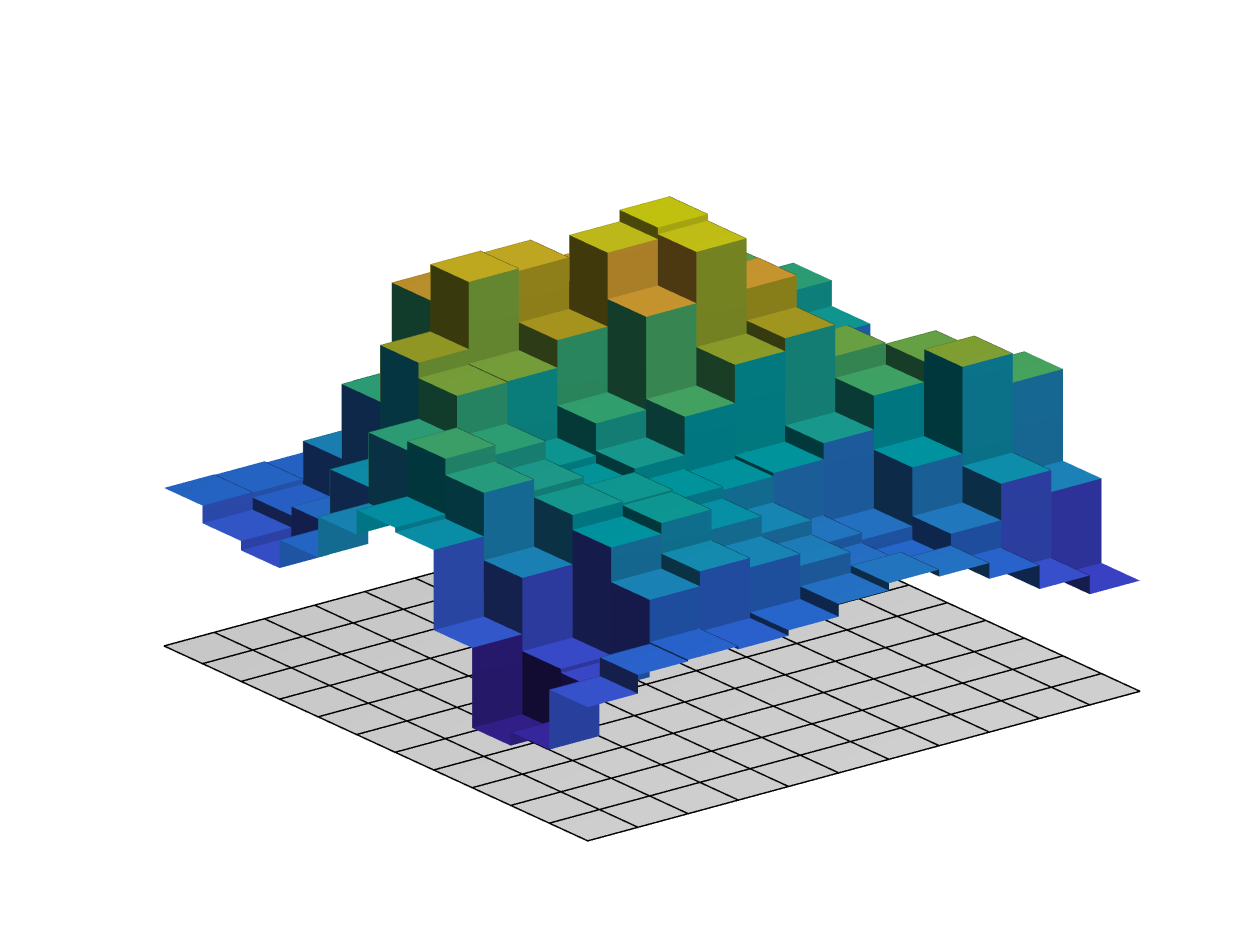}
\caption{$Q[\mu(x,t) u(x,t)]$}
\end{subfigure}
\caption{\textit{Measurement operator.} \textbf{(a)} The absorbed energy is given by $\mu u$ depicted in the figure. \textbf{(b)} By averaging over pixels we get the measurement $Q[\mu u]$ which is a piecewise-constant function on $\Omega_x$ subordinate to the pixel partition.}
\label{fig:Q-operator}
\end{figure}
In the following experiments we focused on the high-dimensional setting where ELMs are used. Given some unknown parameter $t^\dagger\in \Omega_t$, we wish to recover the potential $\mu(x,t^\dagger)$ from the measurable quantity $Q_{\text{obs}}=Q[\mu(x,t^\dagger)u(x,t^\dagger)]$.

We trained the ELM on problem \eqref{eq:PDE-QPAT} with the data
\begin{align}
f(x)=0,\quad g(x)=\frac{x_1^2x_2^2}{16}+1
\label{eq:pde-data}
\end{align}
The base potential was set to $\mu_0(x)=1$ and the other potentials in \eqref{eq:potential} were defined to be Gaussians 
\begin{align}
\mu_i(x)=r_i\exp\!\left(-\frac{\|x-p_i\|^2}{\sigma_i^2}\right),\quad i=1,2,\dots,\Nt
\label{eq:gauss-potentials}
\end{align}
of varying widths $\sigma_i$, amplitudes $r_i$ and centers $p_i$. The parameters for each Gaussian were uniformly sampled from the following ranges: $r_i\in [35,55]$, $\sigma_i\in [0.05,0.15]$ and $p_i\in \Omega_x$. 

In the first experiment we considered the case where the parameter space $\Omega_t$ was 50 dimensional, i.e., $\Nt=50$. To reconstruct the potential $\mu(x,t^\dagger)$, for a randomly selected target parameter $t^\dagger$, we minimized the loss $L(t)$ in \eqref{eq:loss-def} using the gradient descent scheme \eqref{eq:gd-update}. In each iteration we used a backtracking line-search algorithm to determine the step size $\steplen$. As our initial guess $t^{(0)}$, we sampled a random vector in $\Omega_t$. The details of this numerical experiment are found in Table~\ref{tab:exp1-details}.
\begin{table}
\centering
\caption{\emph{Experimental details.} Parameter choices in the initial potential reconstruction experiment using a measurement operator with a fixed pixel resolution ($N_m=25\times 25$) and full coverage of the domain $\Omega_x$.}
\label{tab:exp1-details}
\small
\begin{tabular}{lccccc}
\toprule
$\Omega_x$-grid      & $h_x$  & $\Nt$& $J$  & $M$   & $N_m$ \\
\midrule
$30\times 30$ & 0.069 & 50 & 10000 & 40000 & $25\times 25$ \\
\bottomrule
\end{tabular}
\end{table}

\begin{rem}[Approximate Reference Solution]
As we don't have the exact solution $u(x,t^{\dagger})$, which is needed to construct $Q_{\text{obs}}$, we approximated it by the FEM solution on a finer $40\times 40$ mesh. This was done throughout all experiments in this subsection.
\end{rem}

We performed 200 gradient descent iterations, so our estimate of $\mu(x,t^\dagger)$ is $\mu(x,\hat{t})$ with $\hat{t}=t^{(200)}$. Figure~\ref{fig:vis_pot} shows the evolution of the predicted potential over the iterations together with the true potential. The loss and the reconstruction errors in the parameter and the potential in each iteration are shown in Figure~\ref{fig:error-vis_pot}. We can see that the main features of the potential is recovered after around 10 iterations, this is reflected in the quick drop in the loss and errors early in the iterations. 
\begin{figure}
    \centering
    \begin{subfigure}[b]{0.3\textwidth}
        \centering
        \includegraphics[width=\textwidth]{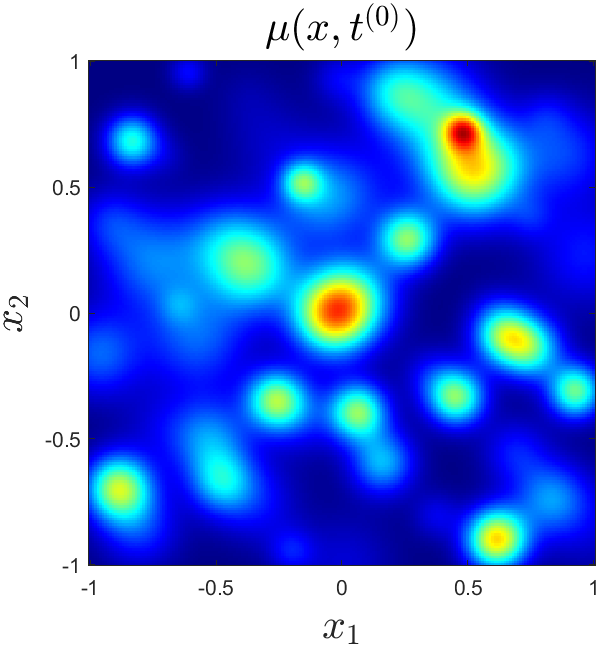}
        \caption{Initial guess}
    \end{subfigure}
    \quad
    \begin{subfigure}[b]{0.3\textwidth}
        \centering
        \includegraphics[width=\textwidth]{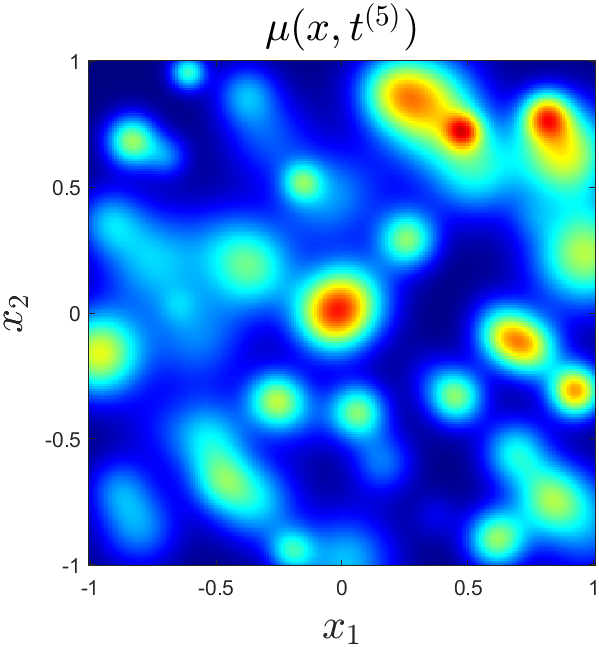}
        \caption{5 iterations}
    \end{subfigure}
    \quad
    \begin{subfigure}[b]{0.3\textwidth}
        \centering
        \includegraphics[width=\textwidth]{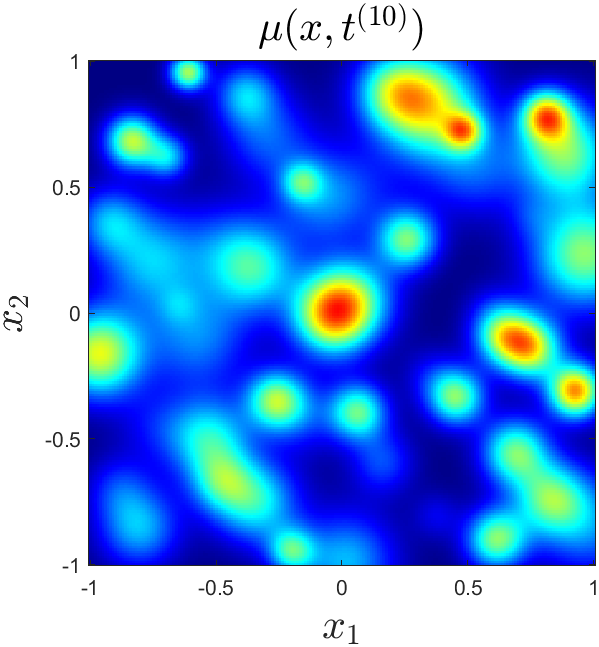}
        \caption{10 iterations}
    \end{subfigure}
    
    \vskip\baselineskip
    \begin{subfigure}[b]{0.3\textwidth}
        \centering
        \includegraphics[width=\textwidth]{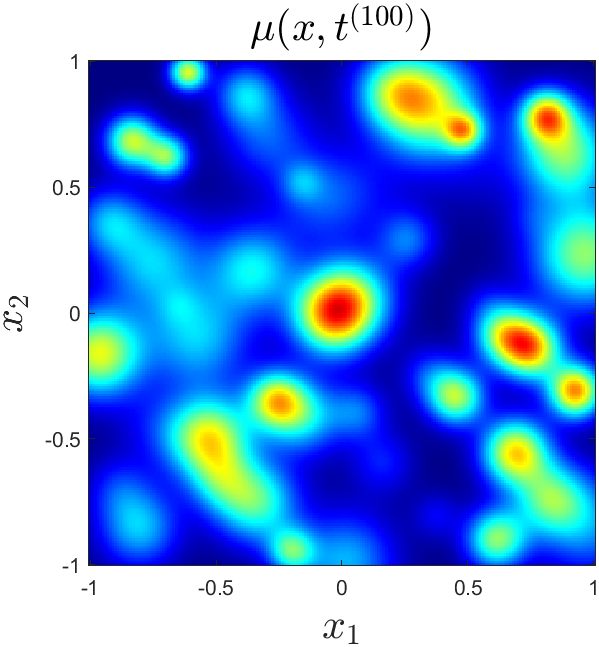}
        \caption{100 iterations}
    \end{subfigure}
    \quad
    \begin{subfigure}[b]{0.3\textwidth}
        \centering
        \includegraphics[width=\textwidth]{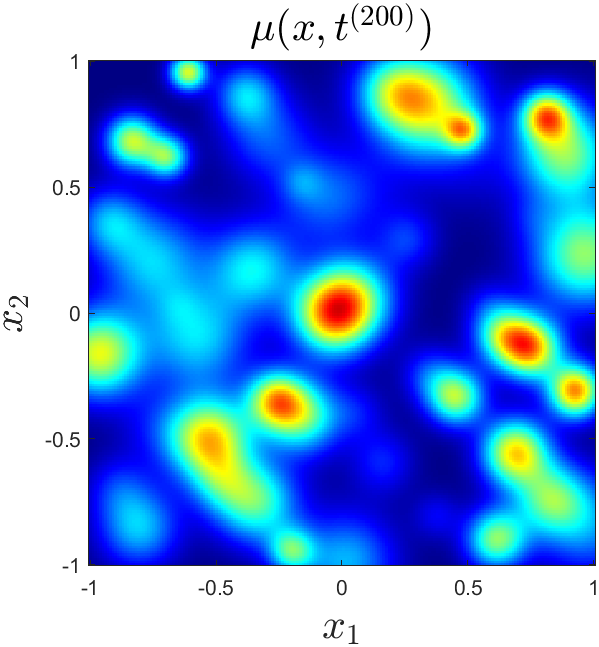}
        \caption{200 iterations}
    \end{subfigure}
    \quad
    \begin{subfigure}[b]{0.3\textwidth}
        \centering
        \includegraphics[width=\textwidth]{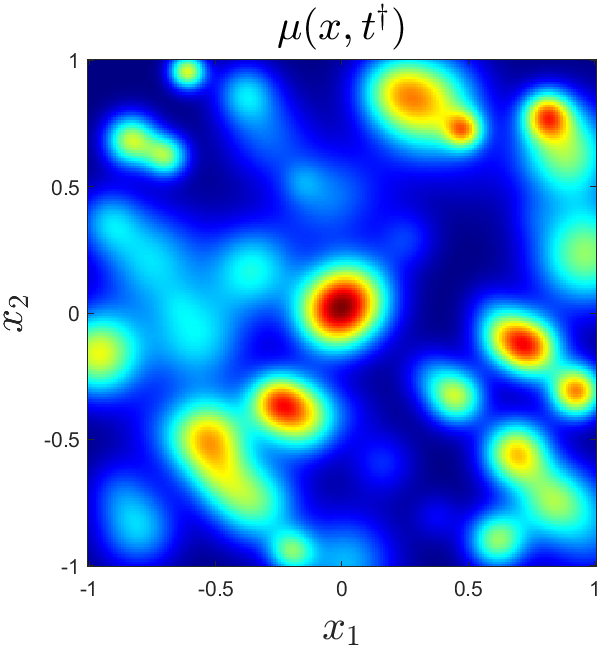}
        \caption{True potential}
    \end{subfigure}
    \caption{\textit{Potential reconstruction}. Illustration of how the potential $\mu$ is successively reconstructed at different stages during the optimization.}
    \label{fig:vis_pot}
\end{figure}

\begin{figure}
\centering
\begin{subfigure}[b]{0.3\textwidth}
\centering
\includegraphics[width=\textwidth]{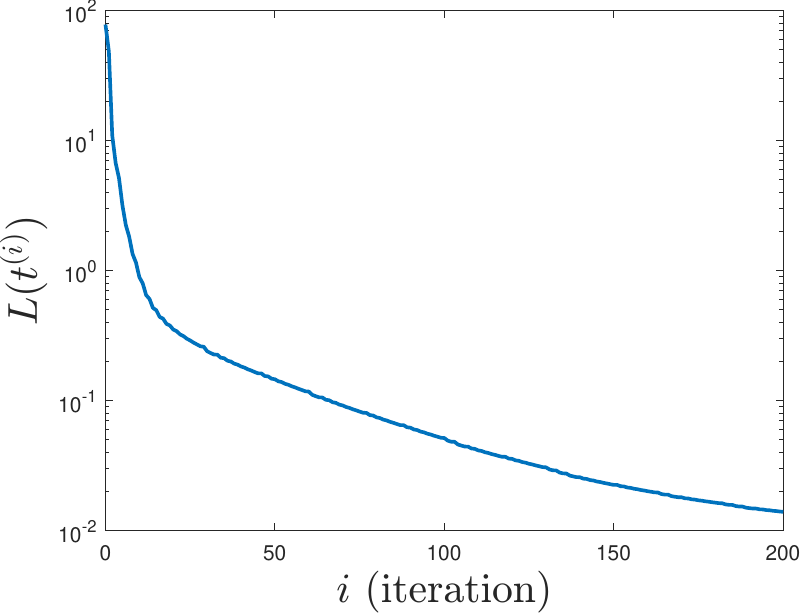}
\caption{}
\end{subfigure}
\hfill
\begin{subfigure}[b]{0.3\textwidth}
\centering
\includegraphics[width=\textwidth]{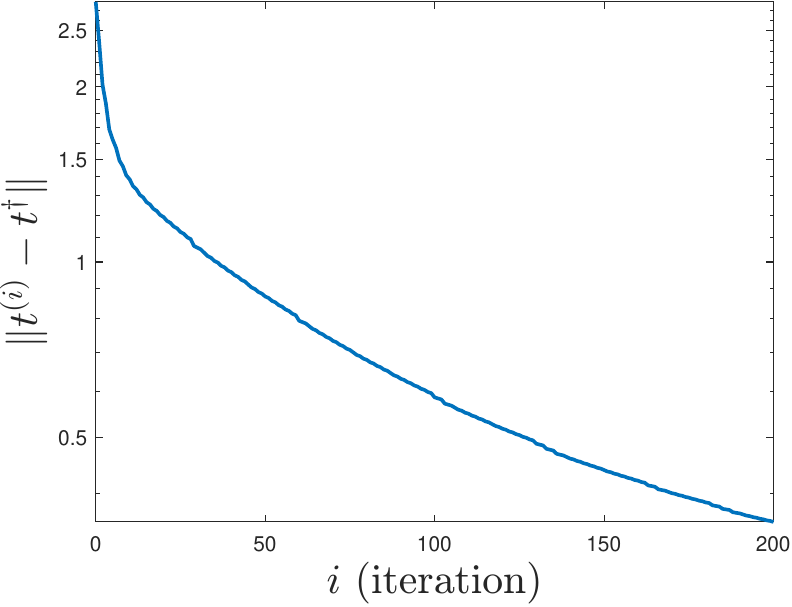}
\caption{ }
\end{subfigure}
\hfill
\begin{subfigure}[b]{0.3\textwidth}
\centering
\includegraphics[width=\textwidth]{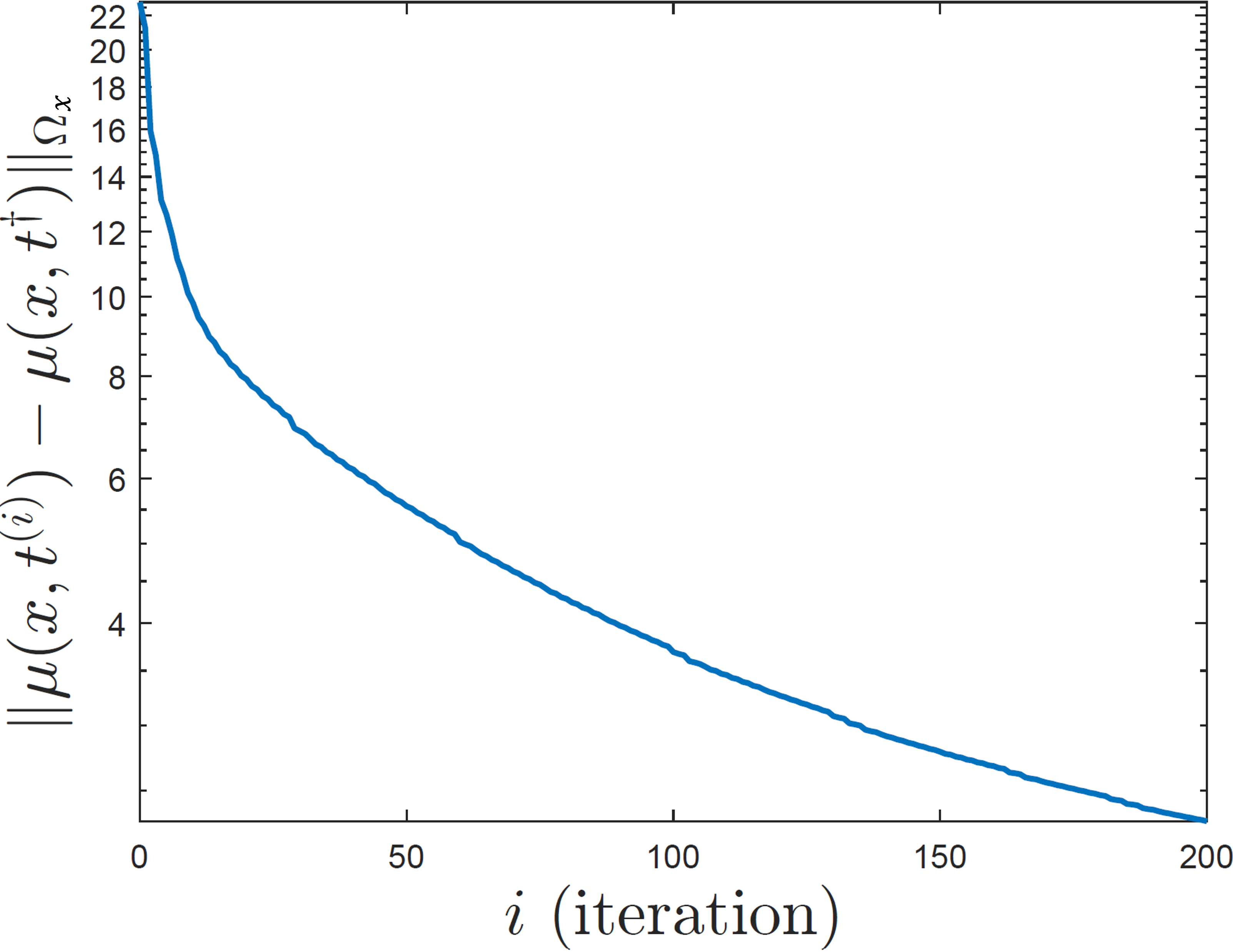}
\caption{ }
\end{subfigure}
\caption{\textit{Loss and error during optimization.} \textbf{(a)} The loss over the iterations, \textbf{(b)} the parameter reconstruction error (measured in the Euclidean norm) over the iterations and \textbf{(c)} the potential reconstruction error over the iterations.}
\label{fig:error-vis_pot}
\end{figure}

To assess the influence of the observation operator on the inverse reconstruction, we conducted a series of tests in which different properties of the measurement operator $Q$ were varied: \emph{resolution}, \emph{domain coverage} and \emph{noise levels}. Through these variations, we systematically explored how the resolution, coverage and noise characteristics of the observation operator affect the accuracy and stability of the recovered parameters.

\paragraph{Resolution.} We varied the number of pixels covering $\Omega_x$, thereby controlling the resolution (pixel size) of the observation operator relative to the underlying finite element mesh. This allowed us to investigate the effect of coarse versus fine observation scales. Figure~\ref{fig:pixel-sizes} shows the effect of the $Q$-operator at different measurement resolutions corresponding to varying the number of pixels $N_m$. We used the PDE-data in \eqref{eq:pde-data} and potentials of the form in \eqref{eq:gauss-potentials} but with amplitudes $r_i\in [5,10]$ and widths $\sigma_i\in [0.2,0.3]$. The details for this experiment are found in Table~\ref{tab:exp2-details}.

\begin{figure}
    \pdfpxdimen=\dimexpr 1in/100\relax
    \centering
    \begin{subfigure}[b]{0.45\textwidth}
        \centering
        \includegraphics[trim={159px 100px 116px 269px},clip,width=\textwidth]{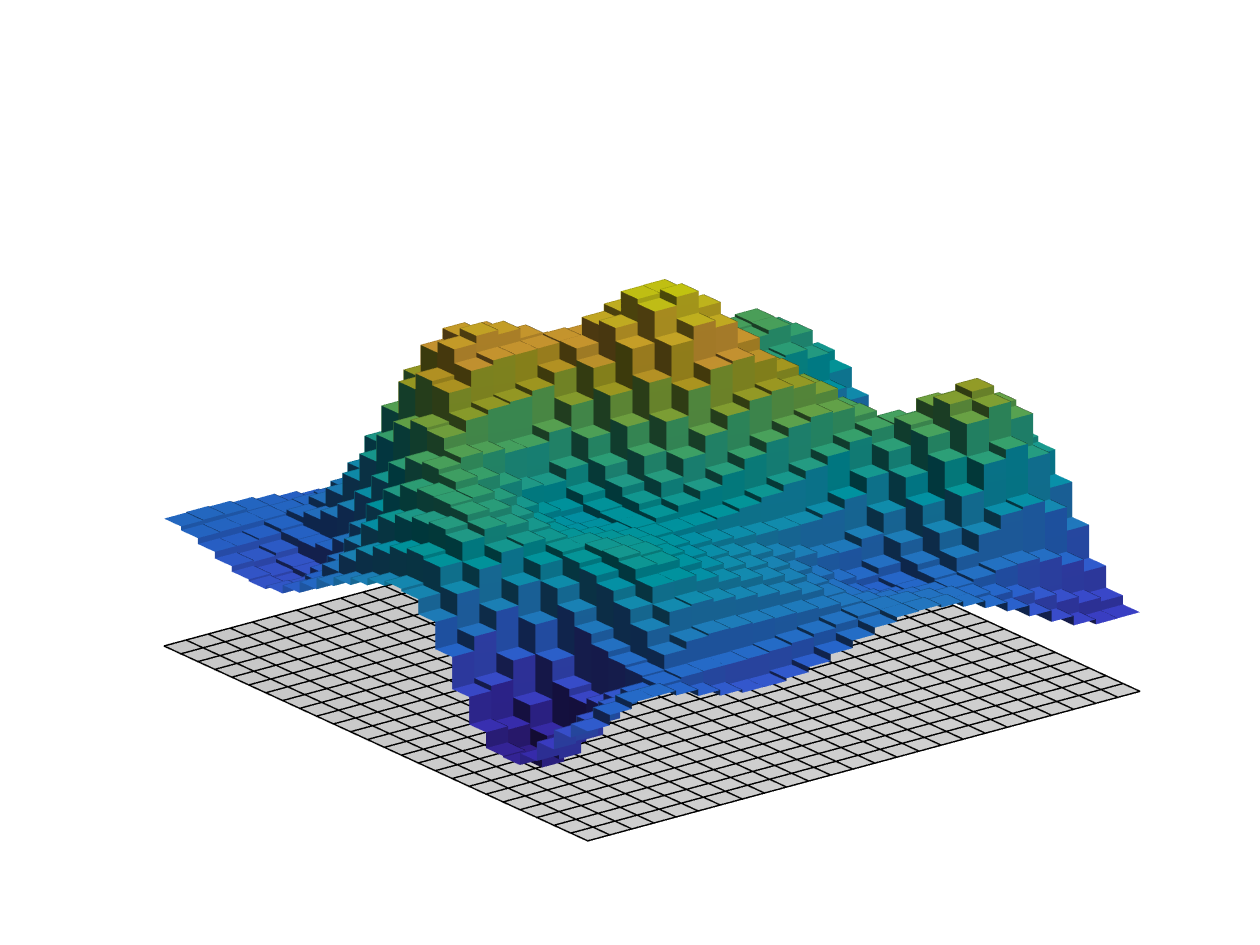}
        \caption{$N_m=25\times 25$}
    \end{subfigure}
    \qquad
    \begin{subfigure}[b]{0.45\textwidth}
        \centering
        \includegraphics[trim={159px 100px 116px 269px},clip,width=\textwidth]{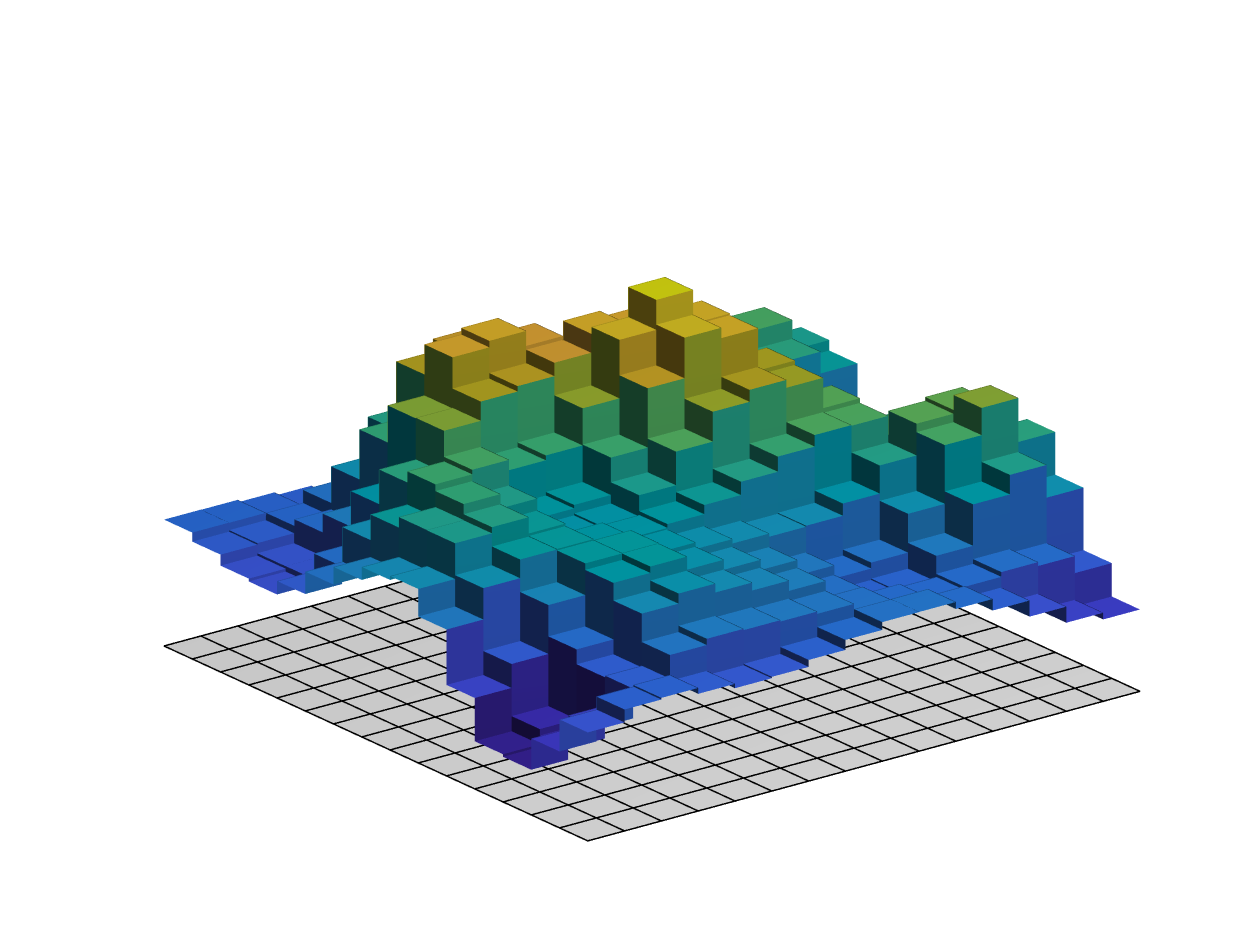}
        \caption{$N_m=15\times 15$}
    \end{subfigure}

    \vspace{0.75em}
    \begin{subfigure}[b]{0.45\textwidth}
        \centering
        \includegraphics[trim={159px 100px 116px 185px},clip,width=\textwidth]{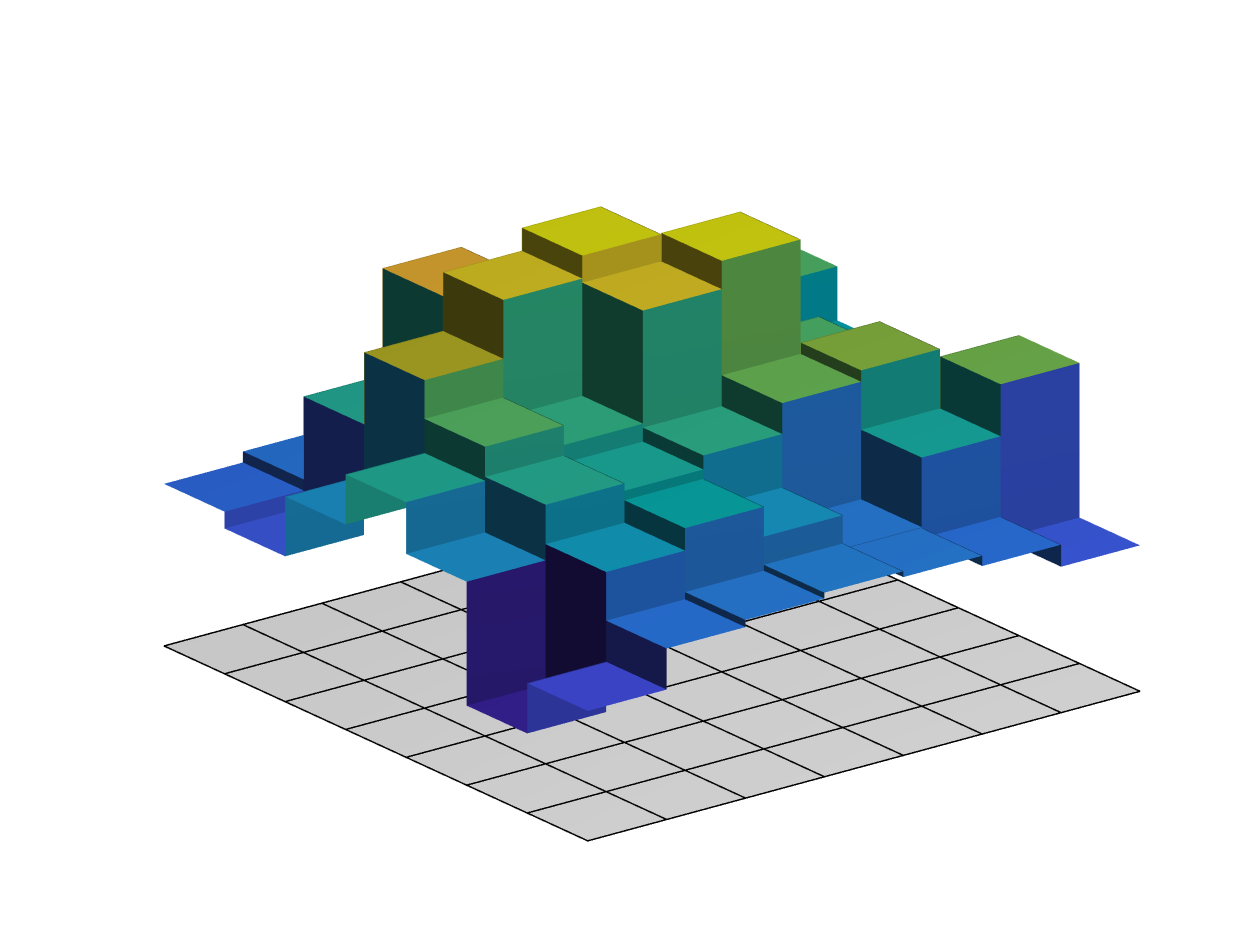}
        \caption{$N_m=7\times 7$}
    \end{subfigure}
    \qquad
    \begin{subfigure}[b]{0.45\textwidth}
        \centering
        \includegraphics[trim={159px 100px 116px 185px},clip,width=\textwidth]{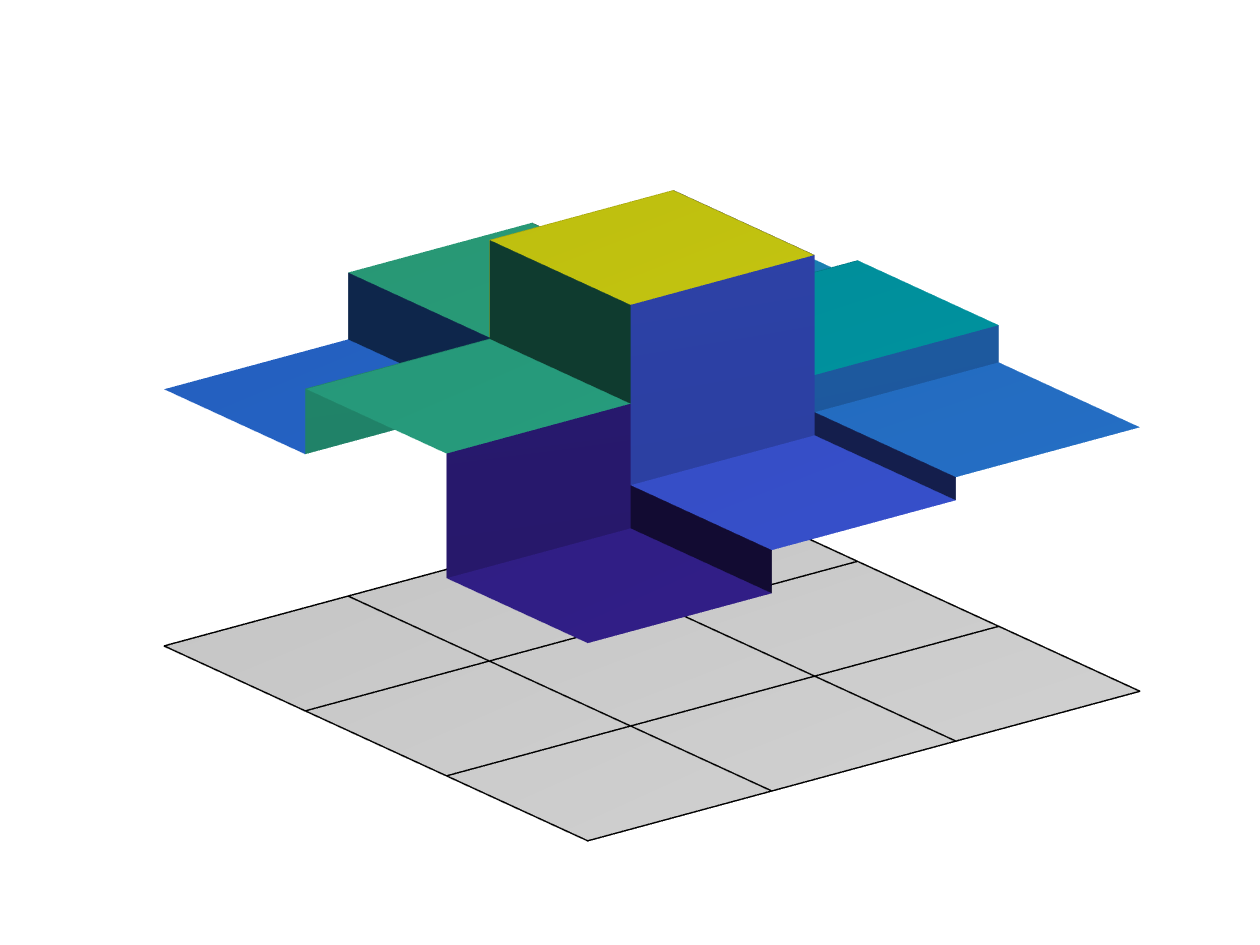}
        \caption{$N_m=3\times 3$}
    \end{subfigure}
    \caption{\emph{Measurement operators with different resolutions.}
    Four examples of the information over $\Omega_x$ seen by the measurement operator $Q$ using different pixel resolutions.}
    \label{fig:pixel-sizes}
\end{figure}

\begin{table}
\centering
\caption{\emph{Experimental details (varied resolution).} Parameter choices for the potential reconstruction experiment when varying the pixel resolution of the measurement operator.}
\label{tab:exp2-details}
\small
\begin{tabular}{lccccc}
\toprule
$\Omega_x$-grid    & $h_x$  & $\Nt$ & $J$   & $M$ & $N_m$ \\
\midrule
$30\times 30$ & 0.069 & 20  & 10000 & 40000  & varied \\
\bottomrule
\end{tabular}
\end{table}

As the number of pixels decreases, information about the absorbed energy $u(x,t)\mu(x,t)$ is progressively lost. Therefore, we expect a reduction in the reconstruction quality when lowering the resolution. This can be seen in Figure~\ref{fig:error-pixelsizes} where the errors for different pixel resolutions are plotted.

\begin{figure}
\centering
\begin{subfigure}[b]{0.45\textwidth}
\centering
\includegraphics[width=\textwidth]{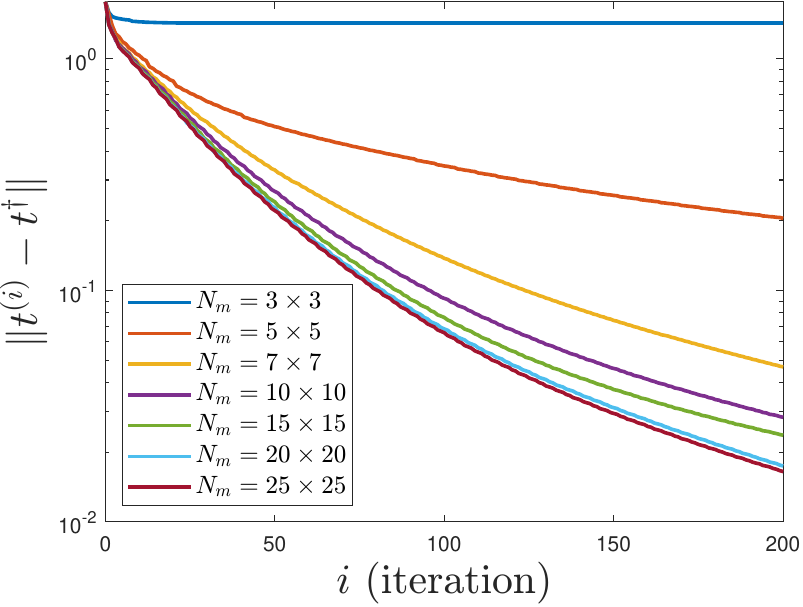}
\caption{}
\end{subfigure}
\qquad
\begin{subfigure}[b]{0.45\textwidth}
\centering
\includegraphics[width=\textwidth]{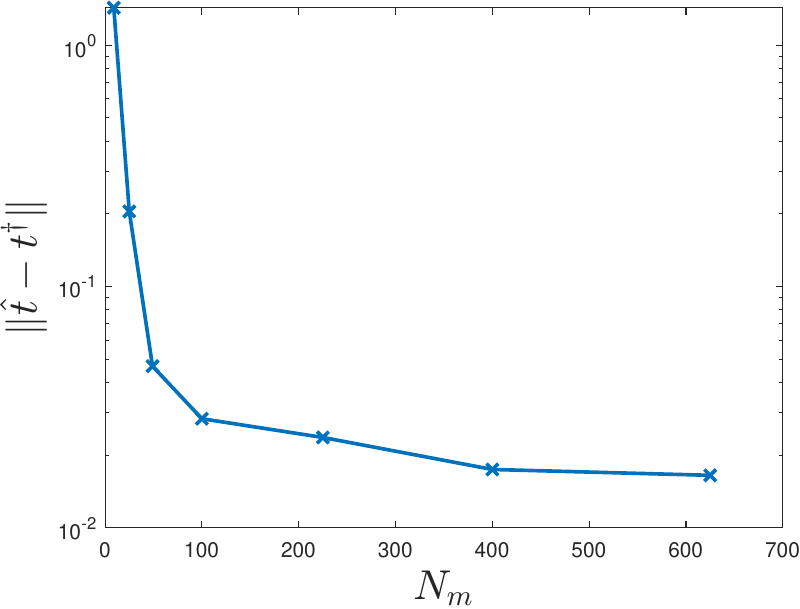}
\caption{ }
\end{subfigure}
\caption{\emph{Effect of pixel resolution.} Errors for the potential reconstruction when varying the pixel resolution of the measurement operator. \textbf{(a)} The reconstruction error over the iterations for different pixel resolutions. \textbf{(b)} The final reconstruction error (measured in the Euclidean norm) as a function of the number of pixels.}
\label{fig:error-pixelsizes}
\end{figure}

The reduction in the error, when increasing the number of pixels, levels out when the pixel resolution approaches the spatial mesh resolution ($30\times 30$). In this regime, each pixel is comparable in size to the triangles in the FEM-mesh. Recall that we replaced the exact solution $u$ in $Q\big[\mu(x,t)u(x,t)\big]$ with the surrogate $Q\big[\mu(x,t)\uh(x,t)\big]$ where $h_x$ is the mesh-size. Consequently, refining the pixel resolution beyond the spatial mesh resolution does not have any significant impact on the reconstruction quality since the additional spatial detail cannot be captured by the underlying finite element discretization.

\paragraph{Domain Coverage.} To model limited sensor coverage or missing data, we restricted $Q$ to act only on a subset of the pixels. In practice, this was implemented by removing the corresponding components from the observations, so that the loss functional $L(t)$ only included contributions from the observed portion of the domain. This is exemplified in Figure~\ref{fig:pixel-fractions}.

\begin{figure}
    \pdfpxdimen=\dimexpr 1in/100\relax
    \centering
    \begin{subfigure}[b]{0.45\textwidth}
        \centering
        \includegraphics[trim={159px 100px 116px 269px},clip,width=\textwidth]{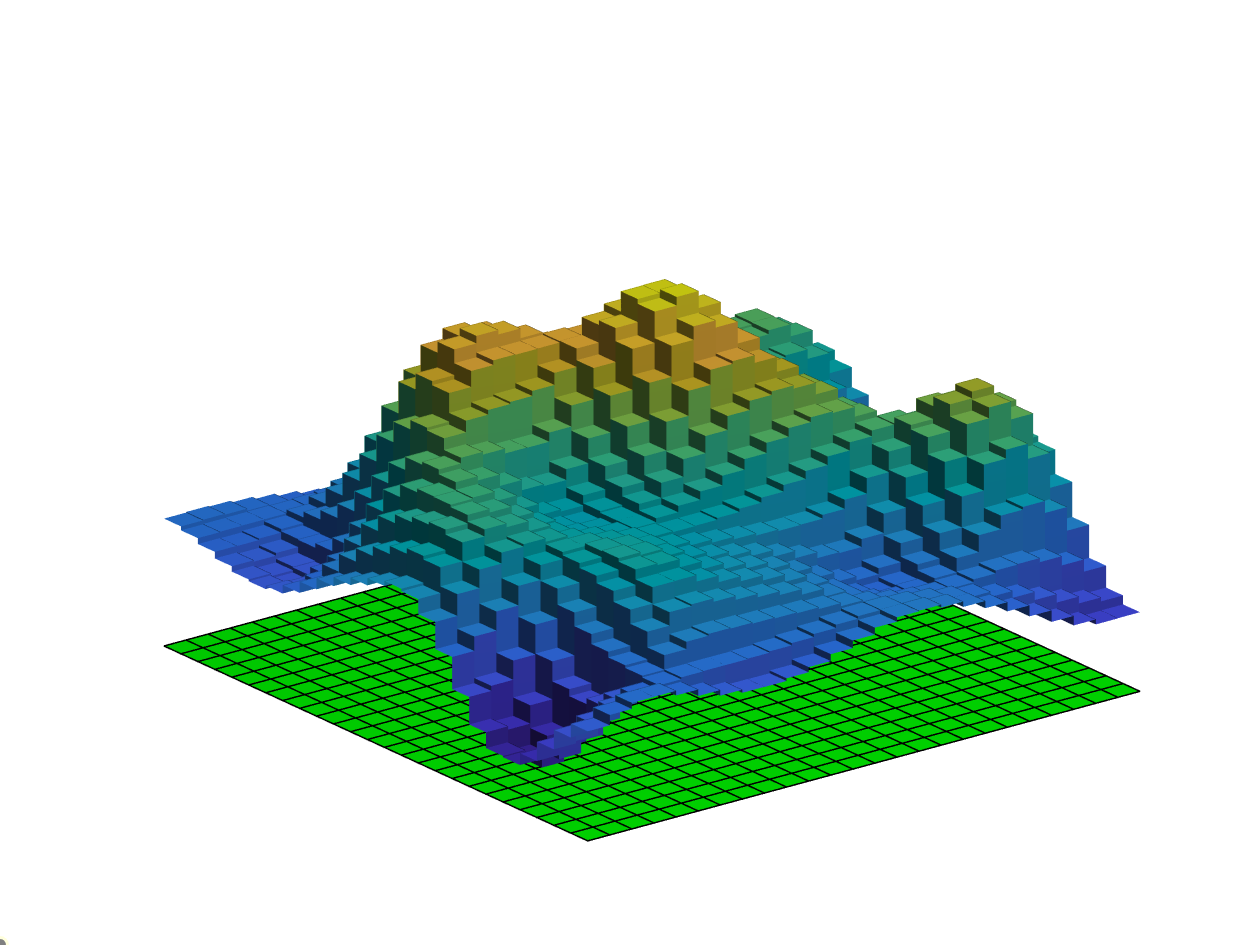}
        \caption{100\%}
    \end{subfigure}
    \qquad
    \begin{subfigure}[b]{0.45\textwidth}
        \centering
        \includegraphics[trim={159px 100px 116px 269px},clip,width=\textwidth]{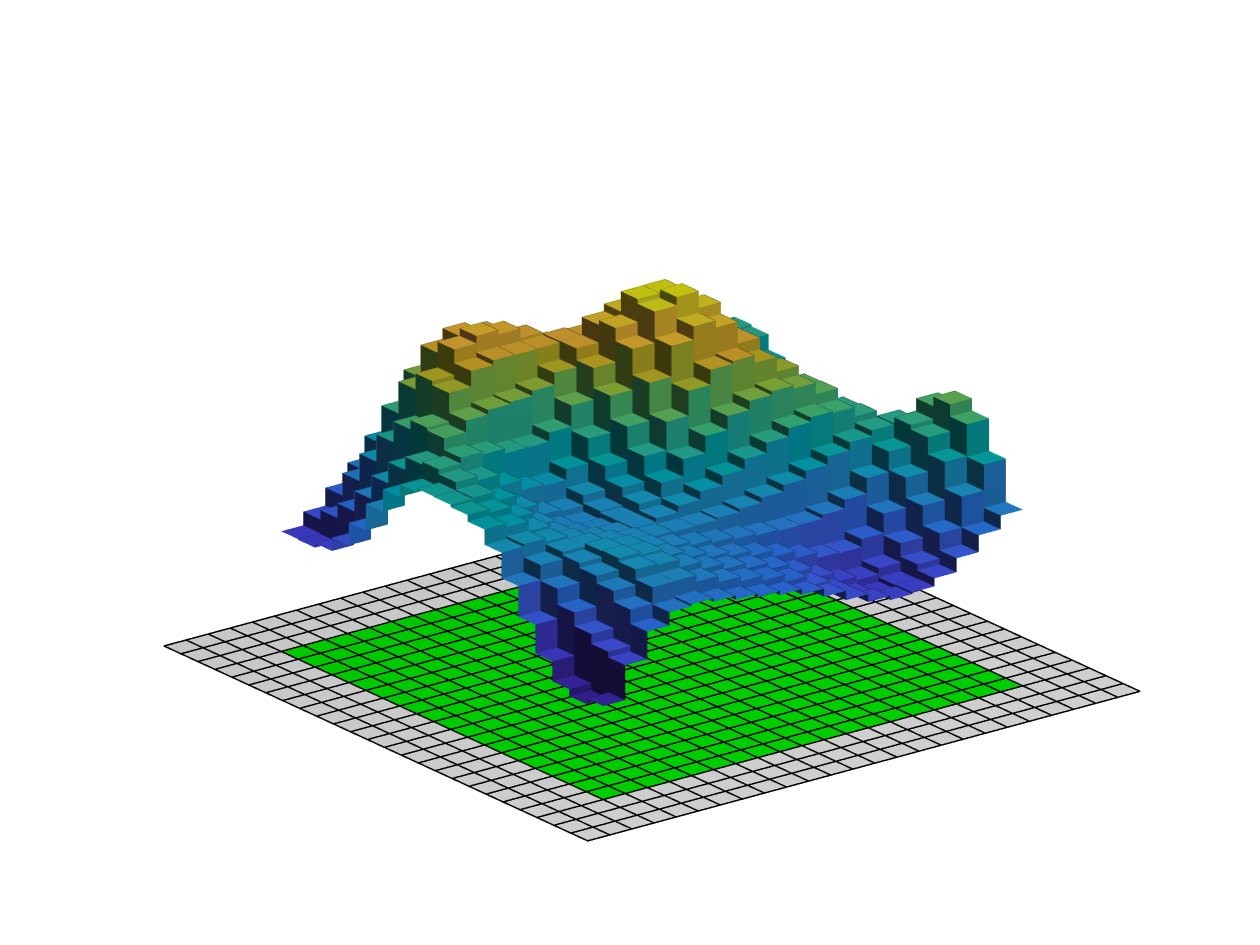}
        \caption{57.76\%}
    \end{subfigure}

    \vspace{0.75em}
    \begin{subfigure}[b]{0.45\textwidth}
        \centering
        \includegraphics[trim={159px 100px 116px 269px},clip,width=\textwidth]{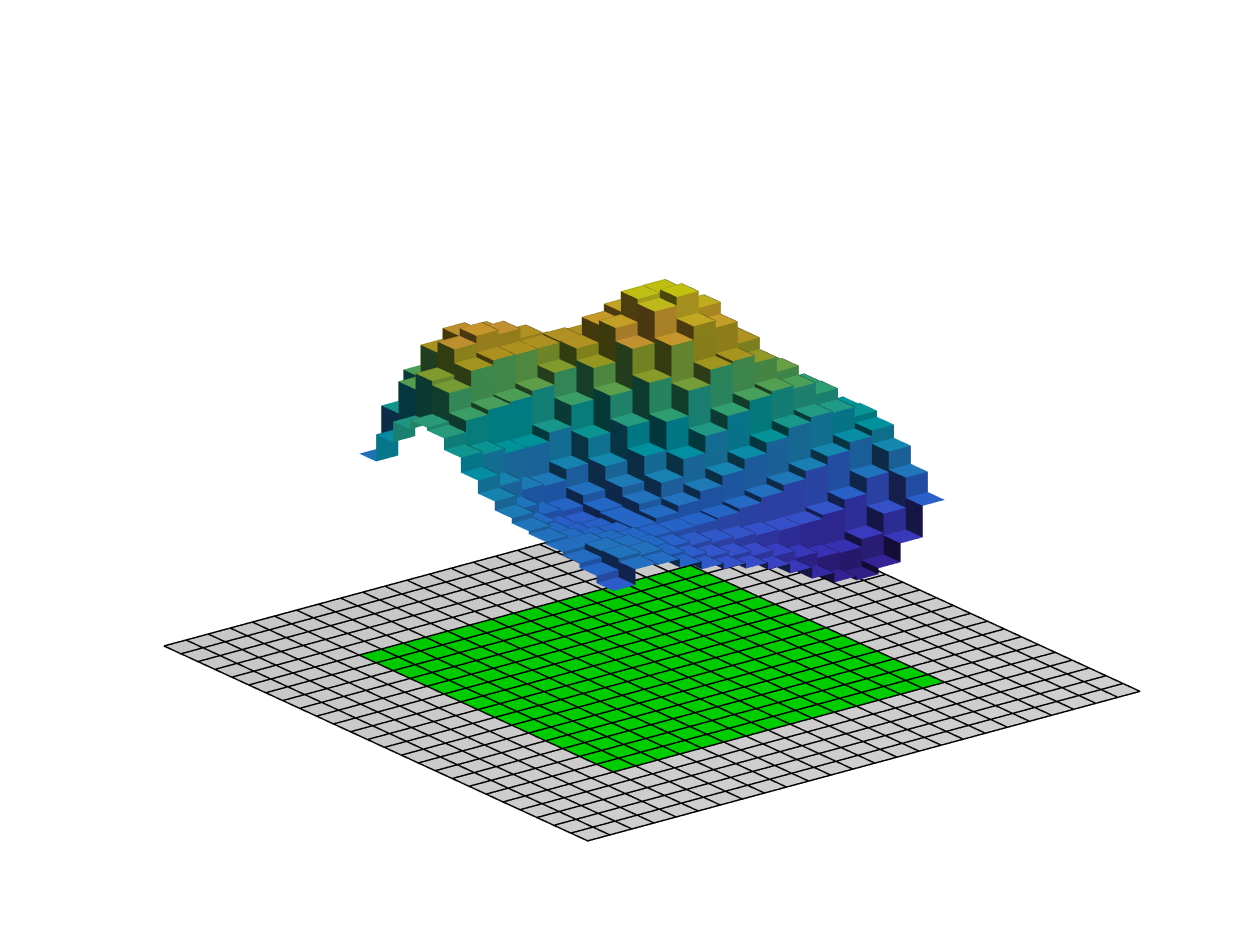}
        \caption{36\%}
    \end{subfigure}
    \qquad
    \begin{subfigure}[b]{0.45\textwidth}
        \centering
        \includegraphics[trim={159px 100px 116px 269px},clip,width=\textwidth]{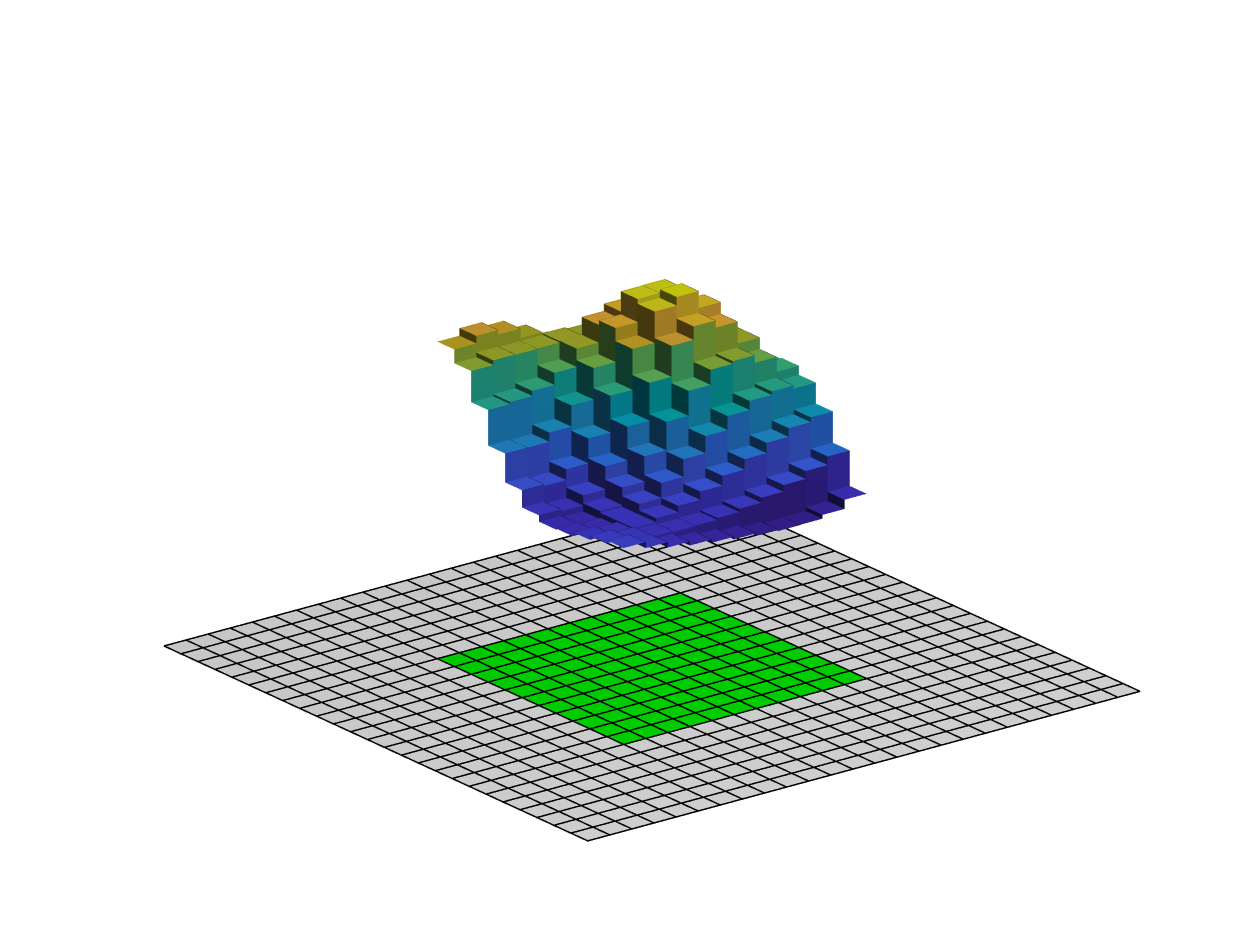}
        \caption{19.36\%}
    \end{subfigure}
    \caption{\emph{Measurement operators with different domain coverage.} Four examples of partial pixel coverage for the measurement operator $Q$, where only the given percentage of the pixels in $\Omega_x$ is seen by the operator.}
    \label{fig:pixel-fractions}
\end{figure}

In this experiment we fixed $\Nt=20$ and reused the PDE-data in \eqref{eq:pde-data} together with the potentials in \eqref{eq:gauss-potentials} with amplitudes $r_i\in [5,10]$ and widths $\sigma_i\in [0.2,0.3]$. Other details of the experimental setup are found in Table~\ref{tab:exp3-details}. 

\begin{table}
\centering
\caption{\emph{Experimental details (varied domain coverage).} Parameter choices for the potential reconstruction experiment when varying the domain coverage of the measurement operator $Q$.}
\label{tab:exp3-details}
\small
\begin{tabular}{lccccc}
\toprule
$\Omega_x$-grid   & $h_x$  & $\Nt$ & $J$   & $M$ & $N_m$ \\
\midrule
$30\times 30$ & 0.069 & 20  & 10000 & 40000  & $25\times 25$ \\
\bottomrule
\end{tabular}
\end{table}

When the coverage of $\Omega_x$ by $Q$ is limited, we expect poor reconstruction quality as essential information is lost. This can be seen in Figure~\ref{fig:error-pixelfractions} below. As the proportion of the observed part of $\Omega_x$ decreases, the error in the reconstructed parameter increases.

\begin{figure}
\centering
\begin{subfigure}[b]{0.45\textwidth}
\centering
\includegraphics[width=\textwidth]{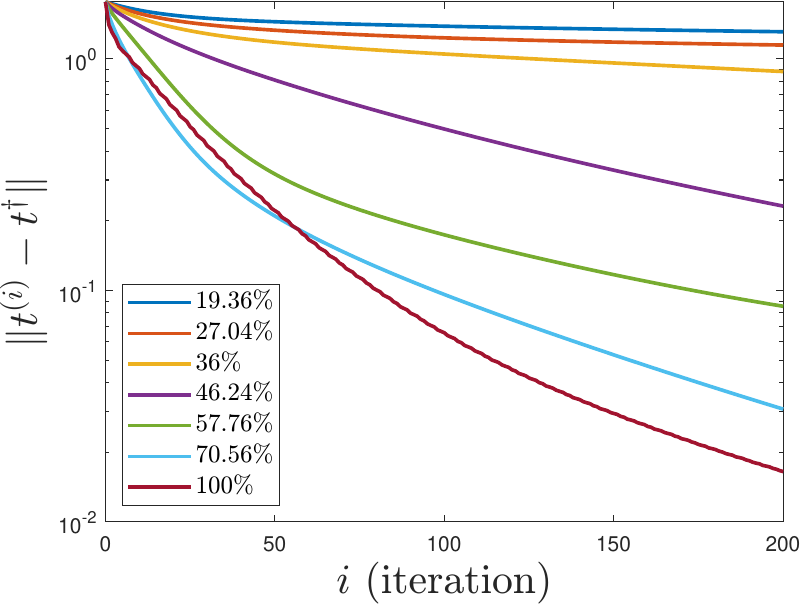}
\caption{}
\end{subfigure}
\qquad
\begin{subfigure}[b]{0.45\textwidth}
\centering
\includegraphics[width=\textwidth]{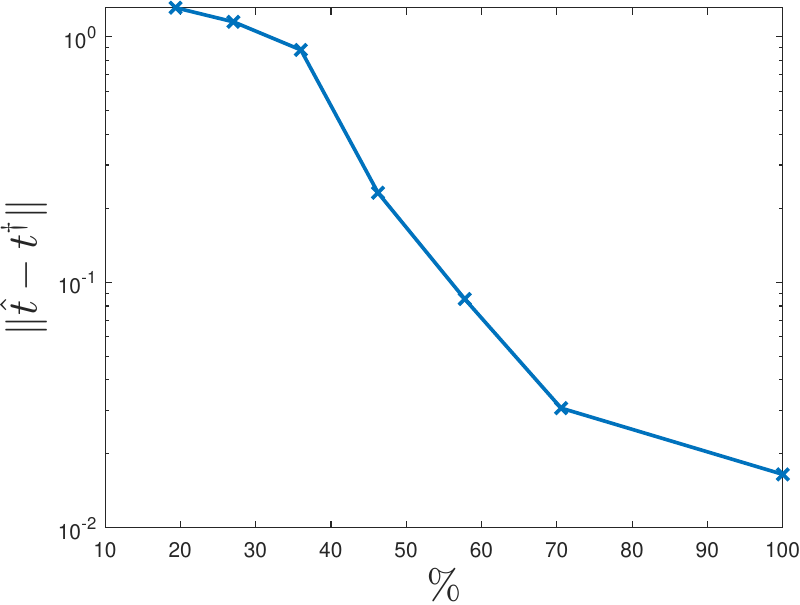}
\caption{ }
\end{subfigure}
\caption{\emph{Effect of domain coverage.} Errors for the potential reconstruction when varying the percentage of the domain seen by the measurement operator $Q$. \textbf{(a)} The reconstruction error (measured in the Euclidean norm) over the iterations for different levels of coverage. \textbf{(b)} The final reconstruction error as a function of the coverage.}
\label{fig:error-pixelfractions}
\end{figure}

\paragraph{Noise Levels.} To investigate the influence of measurement noise on the reconstruction quality, we adopt a multiplicative noise model applied to the pixel observations. Note that we can rewrite the loss in \eqref{eq:loss-def} in vector form as
\begin{align}
L(t)=\frac{|P|}{2}\|q_{\text{obs}}-q(t)\|^2
\end{align}
where $q_{\text{obs}},q(t)\in\IR^{N_m}$ are vectors containing the observed and predicted pixel values from $Q_{\text{obs}}$ and $Q(t)$ respectively. In the noisy setting, the observed vector is replaced by a perturbed version $\tilde{q}_{\text{obs}}$ with elements
\begin{align}
\tilde{q}_{\text{obs},i}=q_{\text{obs},i}\big(1+\rho\epsilon_i \big), \quad \epsilon_i \sim \mathcal{N}(0,1),\quad i=1,2,\hdots, N_m
\end{align}
so that each pixel is corrupted by relative Gaussian noise with amplitude proportional to its magnitude. The proportionality constant $\rho\in(0,1)$ determines the noise level. The corresponding least-square loss becomes

\begin{align}
L(t)=\frac{|P|}{2}\|\tilde{q}_{\text{obs}}-q(t)\|^2
\end{align}

\begin{rem}[Weighted Loss Function]
If information about the measurement variances is available, it can be incorporated directly into the loss functional. The weighted least-squares formulation then reads
\begin{align}
\tilde{L}(t)=\frac{|P|}{2}\big(\tilde{q}_{\text{obs}}-q(t)\big)^{\top}\Sigma^{-1} \big(\tilde{q}_{\text{obs}}-q(t)\big)
\end{align}
where $\Sigma$ denotes the covariance matrix. In the present setting, the noise on the different pixels is assumed to be independent, so that $\Sigma$ is diagonal with entries
\begin{align}
[\Sigma]_{ii}=(\rho\, q_{\text{obs},i})^2
\end{align}
This weighting effectively normalizes the residuals by their expected variance, ensuring pixels with larger values (and consequently larger noise variance) are not disproportionately penalized in the reconstruction. This corresponds to the maximum likelihood estimator under the Gaussian noise assumption with covariance $\Sigma$.
\end{rem}

To assess the robustness of the reconstruction with respect to noise, we fixed $q_{\text{obs}}$ and generated multiple realizations of $\tilde{q}_{\text{obs}}$ for different choices of $\rho$. For each noise level, 20 independent noisy instances were simulated, and for each instance 200 gradient descent iterations were performed. The same PDE-data as in \eqref{eq:pde-data} was used together with potentials of the form \eqref{eq:gauss-potentials} with constant amplitudes $r_i=50$ and widths $\sigma_i=0.1$ but with random centers. Table~\ref{tab:exp4-details} shows the remaining details of the experiment.

\begin{table}
\centering                 
\caption{\emph{Experimental details (varied noise level).} Parameter choices for the potential reconstruction experiment when varying the noise amplitude in the measurements.}
\label{tab:exp4-details}
\small
\begin{tabular}{lccccc}
\toprule
$\Omega_x$-grid  & $h_x$  & $\Nt$ & $J$   & $M$ & $N_m$ \\
\midrule
$20\times 20$ & 0.11 & 20  & 2000 & 8000  & $15\times 15$ \\
\bottomrule
\end{tabular}
\end{table}
For completeness, we report the results for both the losses $L(t)$ and $\tilde{L}(t)$. The mean reconstruction errors as a function of the relative noise amplitude $\rho$ are shown in Figure~\ref{fig:error-noise}.

\begin{figure}
\centering
\includegraphics[width=0.45\linewidth]{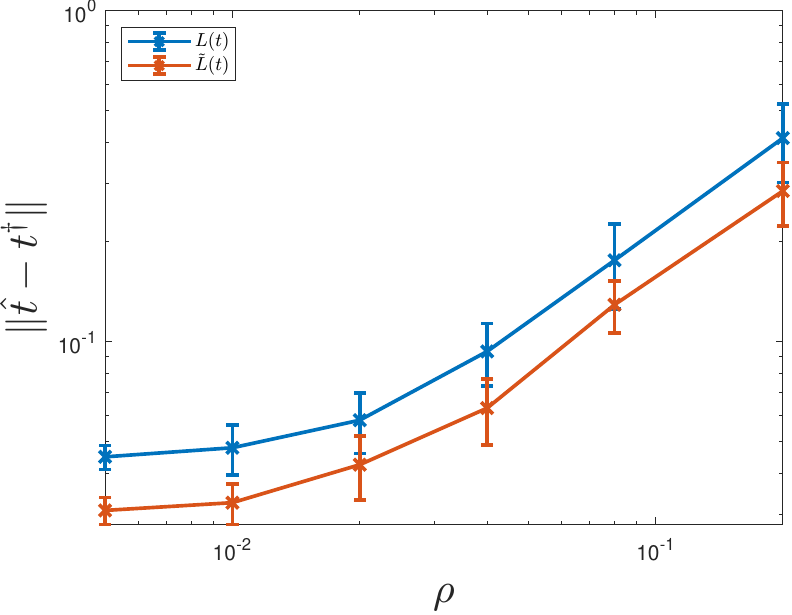}
\caption{\emph{Effect of noise level.} The mean reconstruction error, measured in the Euclidean norm, after 200 iterations as a function of noise amplitude for the original loss functional $L(t)$ and the weighted loss functional $\tilde{L}(t)$.}
\label{fig:error-noise}
\end{figure}
We can see that the performance is better when using the weighted loss. In practice, we may not know the variances as they depend on the pixel values themselves and we only observe noisy measurements of them. However, one could estimate the variances by repeated measurements of the same experiment and then using these estimates in the covariance matrix $\Sigma$.

\section{Conclusions}
\label{sec:conclusions}

We presented a reduced-order modeling framework for parameter-dependent PDEs that
combines finite element discretization in the physical space
$\Omega_x\subset\IR^{\Nx}$ with approximation in the parameter space
$\Omega_t\subset\IR^{\Nt}$. The approach is supported by a detailed analysis of
existence, uniqueness, and regularity of the parametric solution, as well as
joint error estimates that quantify the interaction between spatial and
parametric approximation.

In low-dimensional parameter spaces ($\Nt\le4$), we employ classical interpolation
schemes and establish convergence rates based on Sobolev regularity in the
parameter variable. In higher-dimensional parameter spaces ($\Nt>4$), we replace
classical interpolation by ELM-based surrogates and derive error bounds under
explicit random-feature approximation and stability assumptions. Numerical
experiments indicate that the proposed framework can significantly reduce the
cost of many-query tasks, such as inverse problems, while maintaining accurate
reconstructions.

\paragraph{Main takeaways:}
\begin{itemize}
\item A unified reduced-order workflow accommodates both low- and high-dimensional
parameter spaces by combining a common finite element backbone with either
classical interpolation or randomized ELM surrogates.
\item The analysis provides joint control of spatial and parametric errors, with
rates of order $h_x^s$ in the physical discretization, $h_t^{q+1}$ for classical
parameter interpolation in low dimensions, and $M^{-1/2}$ for ELM-based
approximation in high dimensions under explicit approximation and stability
assumptions.
\item Theoretical reconstruction error bounds for the inverse problem are
supported by numerical experiments, which demonstrate accurate reconstructions
using far fewer full PDE solves than direct approaches, even in the presence of
noise and incomplete data.
\item Once the PDE solves at the interpolation points are available, the ELM
surrogate is inexpensive to evaluate, enabling efficient many-query tasks such
as optimization, uncertainty quantification, and inversion.
\end{itemize}

In future work, we plan to investigate extensions of the framework to other
classes of inverse problems and more general parametric PDE models.

\bigskip
\paragraph{Acknowledgments.}
This research was supported in part by the Wallenberg AI, Au-
tonomous Systems and Software Program (WASP) funded by the Knut and Alice Wallenberg Foundation; the Swedish Research Council Grants Nos.\  2021-04925, 2025-05562; the Swedish Research Programme Essence.
EB acknowledges funding from EPSRC grants EP/T033126/1, EP/V050400/1 and EP/X042650/1.

\bibliographystyle{habbrv}
{
\footnotesize
\bibliography{biblio}
}

\bigskip
\bigskip
{ %
\footnotesize
\noindent
{\bf Authors' addresses:}

\smallskip
\noindent
Erik Burman,  \quad \hfill \addressuclshort\\
{\tt e.burman@ucl.ac.uk}

\smallskip
\noindent
Mats G. Larson,  \quad \hfill \addressumushort\\
{\tt mats.larson@umu.se}

\smallskip
\noindent
Karl Larsson, \quad \hfill \addressumushort\\
{\tt karl.larsson@umu.se}

\smallskip
\noindent
Jonatan Vallin, \quad \hfill \addressumushort\\
{\tt jonatan.vallin@umu.se}

} %

\end{document}